\documentclass{amsart}  
\usepackage{amssymb, tikz, float, enumitem, mathtools}
\usepackage[all, cmtip]{xy}

\usetikzlibrary{calc,decorations.markings,matrix,arrows,positioning}
\tikzset{>=latex}
\usepackage[final]{showkeys}       
\usepackage[breaklinks, final]{hyperref}


\newcommand{\A}{\mathbb{A}}

\theoremstyle{plain} \newtheorem{thm}{Theorem}[section]
\newtheorem{prop}[thm]{Proposition}
\newtheorem{lem}[thm]{Lemma}

\theoremstyle{definition} \newtheorem{defn}[thm]{Definition}
\theoremstyle{remark} 
\theoremstyle{plain}

\DeclareMathOperator{\Cg}{Cg}
\DeclareMathOperator{\Sg}{Sg}
\DeclareMathOperator{\Con}{Con}

\numberwithin{equation}{section}  
\renewcommand{\phi}{\varphi}
\renewcommand{\epsilon}{\varepsilon}

\usepackage{caption, subcaption, xparse}

\theoremstyle{definition} 
\theoremstyle{remark} 


\DeclareMathOperator{\Pol}{Pol}

\DeclareMathOperator{\Mat}{Mat}

\DeclareMathOperator{\face}{face}
\DeclareMathOperator{\cube}{cube}
\DeclareMathOperator{\crsec}{crsec}
\DeclareMathOperator{\Line}{line}
\DeclareMathOperator{\Square}{square}
\DeclareMathOperator{\Corner}{Corner}
\DeclareMathOperator{\Corners}{Corners}


\def\nat{\mathbb{N}}
\def\A{\mathbb{A}}
\def\var{\mathcal{V}}

\def\Meet{\bigwedge}
\def\Union{\bigcup}
\def\meet{\wedge}
\def\join{\vee}

\def\union{\cup}


\tikzset{myStyle/.style={baseline=(center.base), font=\small,
    every node/.style={inner sep=0.25em} }}

\NewDocumentCommand{\LinePic}{ O{} O{} O{1} }{ 
  \begin{tikzpicture}[myStyle, scale=#3*1 ]
    \node (center) at (0,0.5) {\phantom{$\cdot$}}; 
    \path (0,0)  node (s) {$#1$}
        ++(0,1)  node (n) {$#2$};
    \draw (n) -- (s);
  \end{tikzpicture}
}  

\NewDocumentCommand{\SQuare}{ O{} O{} O{} O{} O{1} }{ 
  \begin{tikzpicture}[myStyle, scale=#5*1 ]
    \node (center) at (0.5,-0.5) {\phantom{$\cdot$}}; 
    \path (0,0)  node (nw) {$#2$}
        ++(1,0)  node (ne) {$#4$}
        ++(0,-1) node (se) {$#3$}
        ++(-1,0) node (sw) {$#1$};
    \draw (nw) -- (ne) -- (se) -- (sw) -- (nw);
  \end{tikzpicture}
}  

\NewDocumentCommand{\DeltaZeroSQuare}{ O{} O{} O{} O{} O{1} }{ 
  \begin{tikzpicture}[myStyle, scale=#5*1 ]
    \node (center) at (0.5,-0.5) {\phantom{$\cdot$}}; 
    \path (0,0)  node (nw) {$#2$}
        ++(1,0)  node (ne) {$#4$}
        ++(0,-1) node (se) {$#3$}
        ++(-1,0) node (sw) {$#1$};
    \draw (nw) -- (ne) -- (se) -- (sw) -- (nw);
    \draw (nw) edge[bend right] node[left] {$\delta$}(sw);
    \draw (ne) edge[dotted, bend left]  (se);
  \end{tikzpicture}
}  

\NewDocumentCommand{\DeltaOneSQuare}{ O{} O{} O{} O{} O{1} }{ 
  \begin{tikzpicture}[myStyle, scale=#5*1 ]
    \node (center) at (0.5,-0.5) {\phantom{$\cdot$}}; 
    \path (0,0)  node (nw) {$#2$}
        ++(1,0)  node (ne) {$#4$}
        ++(0,-1) node (se) {$#3$}
        ++(-1,0) node (sw) {$#1$};
    \draw (nw) -- (ne) -- (se) -- (sw) -- (nw);
    \draw (nw) edge[dotted, bend left] (ne);
    \draw (sw) edge[ bend right] node[below] {$\delta$} (se);
  \end{tikzpicture}
}  

\NewDocumentCommand{\Axes}{ O{} O{} O{} O{1} O{0} O{1} O{2} }{  
  \begin{tikzpicture}[myStyle, scale=#4*0.85] 
    \draw (0,0) -- node[above]{$#1$} (1,0) node[right]{$#5$}
      (0,0) -- node[left]{$#2$} (0,1) node[above]{$#6$}
      (0,0) -- node[below left=-0.25em]{$#3$} (0.5,-0.5) node[below right=-0.2em]{$#7$};
    \node (center) at (0.5,0.75) {\phantom{$\cdot$}};
  \end{tikzpicture}
}  

\newcommand{\CubeNodes}[8]{  
  \node at (0.75,-0.75) (center) {\phantom{$\cdot$}}; 
  \path (0,0)  node (back_nw)      {$#2$}
      ++(1,0)  node (back_ne)      {$#4$}
      ++(0,-1) node (back_se)      {$#3$}
      ++(-1,0) node (back_sw)      {$#1$}
        (0.5,-0.5) node (front_nw) {$#6$}
      ++(1,0)  node (front_ne)     {$#8$}
      ++(0,-1) node (front_se)     {$#7$}
      ++(-1,0) node (front_sw)     {$#5$};
}  
\newcommand{\CubeUnwrapped}[8]{ 
  \CubeNodes{#1}{#2}{#3}{#4}{#5}{#6}{#7}{#8}
  \draw (back_nw) -- (back_ne) -- (back_se) -- (back_sw) -- (back_nw)
    (front_nw) -- (front_ne) -- (front_se) -- (front_sw) -- (front_nw)
    (back_nw) -- (front_nw)
    (back_ne) -- (front_ne)
    (back_se) -- (front_se)
    (back_sw) -- (front_sw);
}  
\newcommand{\CubeDUnwrapped}[8]{ 
  \CubeNodes{#1}{#2}{#3}{#4}{#5}{#6}{#7}{#8}
  \draw (front_nw) -- (front_ne) -- (front_se) -- (front_sw) -- (front_nw)
    (back_nw) -- (back_ne) (back_sw) -- (back_nw)
    (back_nw) -- (front_nw)
    (back_ne) -- (front_ne)
    (back_sw) -- (front_sw);
  \draw[densely dotted] (back_ne) -- (back_se) -- (back_sw)
    (back_se) -- (front_se);
}  
\NewDocumentCommand{\Cube}{ O{} O{} O{} O{} O{} O{} O{} O{} O{1} }{  
  \begin{tikzpicture}[myStyle, scale=#9*1 ]
    \CubeUnwrapped{#1}{#2}{#3}{#4}{#5}{#6}{#7}{#8}
  \end{tikzpicture}
}  
\NewDocumentCommand{\CubeD}{ O{} O{} O{} O{} O{} O{} O{} O{} O{1} }{  
  \begin{tikzpicture}[myStyle, scale=#9*1 ]
    \CubeDUnwrapped{#1}{#2}{#3}{#4}{#5}{#6}{#7}{#8}
  \end{tikzpicture}
}  

\NewDocumentCommand{\DeltaZeroCubeD}{ O{} O{} O{} O{} O{} O{} O{} O{} O{1} }{  
  \begin{tikzpicture}[myStyle, scale=#9*1]
    \CubeDUnwrapped{#1}{#2}{#3}{#4}{#5}{#6}{#7}{#8}
    \draw (back_sw)  to[out=30,in=180-30] (back_se)
      (back_nw)  to[out=30,in=180-30] node[auto]{$\delta$} (back_ne)
      (front_sw) to[out=30,in=180-30] (front_se);
    \draw[dashed] (front_nw) to[out=30,in=180-30] (front_ne);
  \end{tikzpicture}
}  
\NewDocumentCommand{\DeltaTwoCubeD}{ O{} O{} O{} O{} O{} O{} O{} O{} O{1} }{  
  \begin{tikzpicture}[myStyle, scale=#9*1]
    \CubeDUnwrapped{#1}{#2}{#3}{#4}{#5}{#6}{#7}{#8}
    \draw (back_sw) to[out=180+30,in=180] node[auto,swap]{$\delta$} (front_sw)
      (back_se) to[out=0,in=30] (front_se)
      (back_nw) to[out=180+30,in=180] (front_nw);
    \draw[dashed] (back_ne) to[out=0,in=30] (front_ne);
  \end{tikzpicture}
}  


\begin{document}
\title[]{Some notes on the ternary modular commutator}
\author{Andrew Moorhead \\ }
\date{\today}

\address[Andrew Moorhead]{
  Department of Mathematics;
  Vanderbilt University;
  Nashville, TN;
  U.S.A.}
\email[Andrew Moorhead]{andrew.p.moorhead@vanderbilt.edu}

\thanks{This material is based upon work supported by the National
Science Foundation grant no.\ DMS 1500254}

\begin{abstract}
We define a relation that describes the ternary commutator for congruence modular varieties. Properties of this relation are used to investigate the theory of the higher commutator for congruence modular varieties. 
\end{abstract}

\maketitle

\section{Introduction}
The topic of these notes is commutator theory. Specifically, we study higher commutators which are a higher arity generalization of the classical binary commutator for Mal'cev varieties that was discovered by Smith in \cite{smith}. Smith's commutator was extended to modular varieties by Hagemann and Hermann in \cite{HagHer}. The theory of the modular commutator is developed in detail by Freese and McKenzie in \cite{FM} and Gumm in \cite{Gumm}. Commutator theory has been investigated for varieties that are not modular, notably by Kearnes and Szendrei in \cite{kearnesszendrei} and Kearnes and Kiss in \cite{KissKearnes}.

The definition of the binary commutator was generalized by Bulatov in \cite{bulatov proc} to a definition of a commutator of higher arity. The basic properties of this higher commutator were developed for congruence permutable varieties by Aichinger and Mudrinski in \cite{Aich Mun}. The author of \cite{moorhead} showed many of the basic properties true of the higher commutator in permutable varieties hold also for modular varieties. In \cite{wires}, Wires develops several properties of higher commutators outside of the context of modularity. 

Our main objective here is to show that the development of the binary commutator for congruence modular varieties is a special case of a more general development of a commutator of arity $\geq 2$ for congruence modular varieties, although we present here only the ternary case. Specifically, the binary modular commutator is shown in \cite{FM} to be equivalently defined in three ways: 
\begin{enumerate}
\item
as the least congruence satisfying the term condition, 
\item
as the union of classes related to the diagonal by a special congruence called $\Delta$, or 

\item the greatest binary operation on all congruence lattices across a variety satisfying certain conditions. 
\end{enumerate}
The higher commutator was defined by Bulatov as a higher arity operation obtained by generalizing the term condition. In \cite{oprsal}, Opr{\v s}al develops the theory of the higher commutator for permutable varieties by finding the right way to define $\Delta$ in the higher arity case. We extend this idea to the theory of the ternary commutator for a modular variety. 

The primary value of this treatment of the modular ternary commutator is that it involves a detailed and nontrivial example of a type of relation that we call a `higher dimensional congruence.' Higher dimensional congruences are used to develop properties of the higher commutator for Taylor varieties in \cite{moorheadinprep}.

The structure of the note is as follows: In Sections 2-4 we develop notation, list the properties of modularity that are to be used and briefly review properties of $\Delta$ for the binary case. In Section 5 we introduce the idea of a complex of matrices. In Section 6 these complexes of matrices are used to define a new type of centrality which is shown to be equivalent to the usual term condition centrality in a modular variety. In Section 7 we define $
\Delta$ and use it to characterize the ternary commutator. In Section 8 we show that the ternary commutator is related to the binary commutator by the inequality

$$ [[\theta_0, \theta_1], \theta_2] \leq [\theta_0, \theta_1, \theta_2].$$

In the final section we show that the ternary commutator and $3$-dimensional cube terms are related in a manner which is analogous to the relationship between the binary commutator and a Mal'cev operation.

\section{Matrices and Centrality}

In this section we define matrices. Relations that are comprised of such matrices can be used to define the Bulatov or what we call the \textbf{term condition commutator}. Although this note deals with the ternary commutator, the definitions given here are for arbitrary arity. 

\subsection{Dimension Two}
We begin with $(2)$-dimensional matrices. The binary commutator is defined with the so-called term condition. Let $\A$ be an algebra and let $\theta_0, \theta_1 , \delta \in \A$. We say that \textbf{$\theta_0$ centralizes $\theta_1$ modulo $\delta$} if 

\[\text{for all $t \in \Pol(\A)$, $\textbf{a}_0 \equiv_{\theta_0} \textbf{b}_0$ and $\textbf{a}_1 \equiv_{\theta_1} \textbf{b}_1$},
\]

\[
t(\textbf{a}_0, \textbf{a}_1) \equiv_\delta t(\textbf{a}_0, \textbf{b}_1 ) \implies t(\textbf{a}_0, \textbf{b}_1) \equiv_\delta t(\textbf{b}_0, \textbf{b}_1 ).
\]

This is the standard definition of the term condition, but the term condition can be formulated in terms of matrices, which we shall now define. The set $2^2$ is a natural coordinate system for a square and we say that a relation $R \subseteq A^{2^2}$ is \textbf{$(2)$-dimensional}. We will usually orient our squares like

\[
\SQuare[(0,0)][(0,1)][(1,0)][(1,1)][1.3],
\]
so that when we write 

\[
h =\SQuare[a][c][b][d] \in R
\]
for some $(2)$-dimensional relation $R$, we mean that $h_{(0,0)} = a$, $h_{(1,0)} = b$ and so on. 

Now define the \textbf{algebra of $(\theta_0, \theta_1)$-matrices} to be 

\[
M(\theta_0, \theta_1) \coloneqq \Sg_{\A^{2^2}}\left( \left\{\SQuare[x][x][y][y]: \langle x,y \rangle \in \theta_0 \right\} \union \left\{ \SQuare[x][y][x][y]: \langle x,y \rangle \in \theta_1 \right\} \right).
\]
The centrality condition $C(\theta_0, \theta_1; \delta)$ is equivalent to the assertion
\[
\forall \SQuare[a][c][b][d] \in M(\theta_0, \theta_1)
\left( 
\DeltaZeroSQuare[a][c][b][d]
\right),
\]
where the diagram in parenthesis is a picture representing the implication 

\[\langle a,c \rangle \in \delta \implies \langle b,d \rangle \in \delta.
\]
Similarly, $C(\theta_1, \theta_0; \delta)$ is equivalent to the assertion
\[
\forall \SQuare[a][c][b][d] \in M(\theta_0, \theta_1)
\left( 
\DeltaOneSQuare[a][c][b][d]
\right).
\]
Now, set

\[
[\theta_0, \theta_1] \coloneqq \Meet \{\delta: C(\theta_0, \theta_1; \delta) \}.
\]

\subsection{Arbitrary Dimension}

For higher arity commutators we prefer at the outset to define term condition centrality and the commutator in terms of matrices. The definition we give is easily seen to be equivalent to the usual one, see \cite{moorhead}.

Let $\A$ be an algebra and let $k \in \nat_{\geq 1}$. The set of functions $2^k$ is a coordinate system for the $(k)$-dimensional hypercube, where two functions are connected by an edge if and only if they differ in exactly one coordinate. An $\A$-invariant relation 

\[
R \leq  A^{2^k}
\]
is called a \textbf{$(k)$-dimensional invariant relation}. Members of such an $R$ are what we call \textbf{vertex labeled hypercubes}, although we usually not be so formal.

Take an $m \in A^{2^k}$. Notation is needed to single out factors of $m$ that are lower dimensional matrices. We specify a partial function $g: k \rightarrow 2$ by a pair of tuples, $D = (d_0, \dots, d_{l-1})$ and $R = (g(d_0), \dots, g(d_{l-1}))$, where the tuple $D$ lists the elements of the domain of $g$ in order and $|\text{dom}(g)| = l$. Let the map 

\[
\crsec_D^R :  A^{2^k} \rightarrow A^{2^{k\setminus{D}}}
\]
be the projection onto those factors indexed by functions $f \in 2^k$ that extend $g$. Formally, $(\crsec_D^R(m))_h = m_f$, where $f = g \union h$. In case $|D| = k-2$ or $|D| = k-1$ we denote the map $\crsec^R_D$ by  $\Square_D^R$ or $\Line_D^R$, respectively. When $|D| = 1$ the map will be called $\face_D^R$. 

Suppose $|D| = k-1$ and let $i \in k \setminus D$. When $R$ is not the constant $1$ tuple we call $\Line_D^R(m)$ an \textbf{$(i)$-supporting line}, and when $R$ is the constant $1$ tuple we call $\Line_D^R(m)$ the \textbf{$(i)$-pivot line}. Similarly, suppose $|D| = k-2$ and take $ i\neq j \in k \setminus D$. When $R$ is not the constant $1$ tuple we call $\Square_D^R(m)$ an \textbf{ $(i,j)$-supporting square}, and when $R$ is the constant $1$ tuple we call $\Square_D^R(m)$ the \textbf{$(i,j)$-supporting square}.

 A picture of a matrix, pair of squares and a line is given in Figure \ref{fig:cubeandfaces}. Notice the orientation of the cube with respect to the coordinates, as this orientation is used throughout this note.

\begin{figure}

\includegraphics{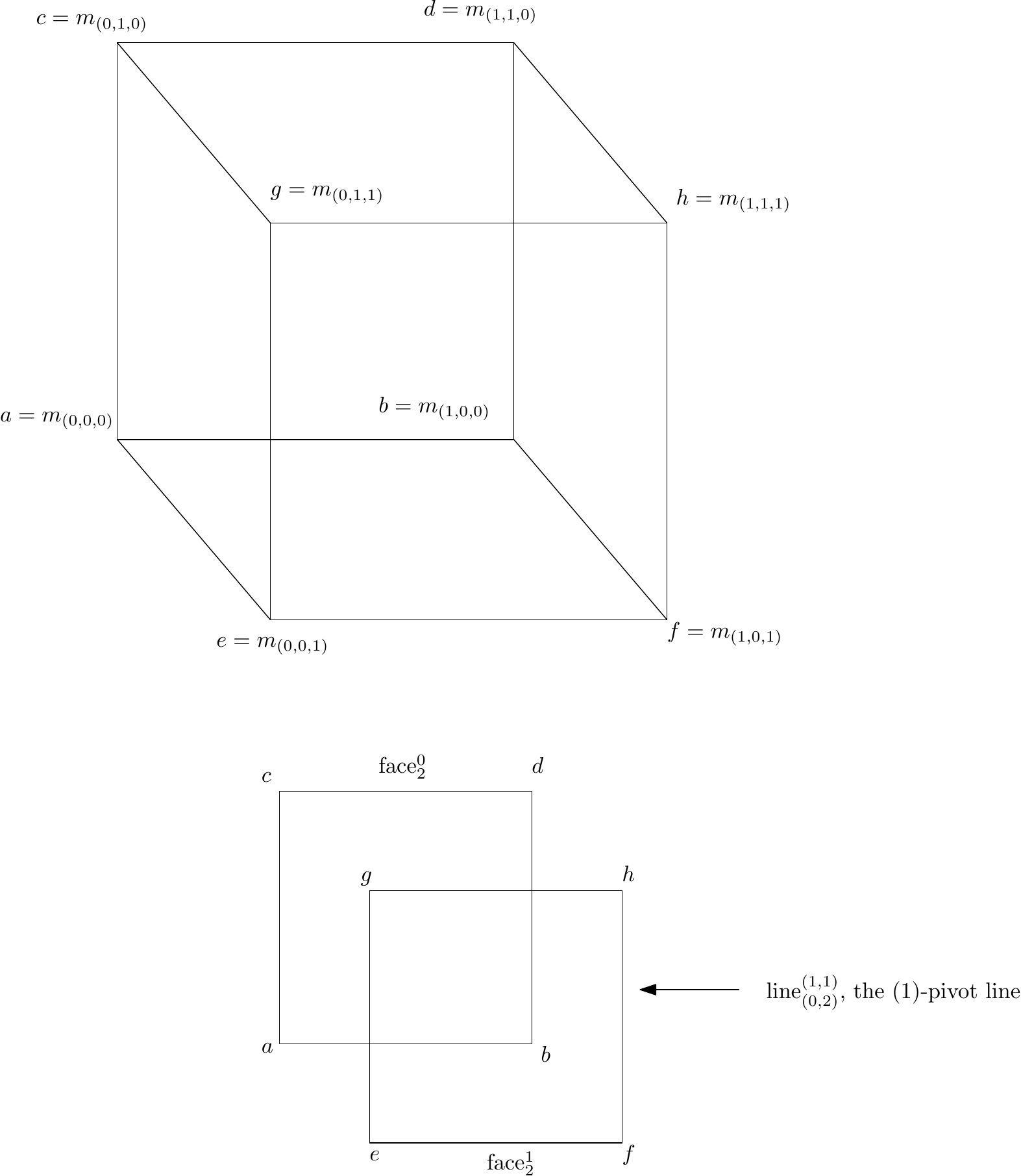}

\caption{}\label{fig:cubeandfaces}
\end{figure}

The term condition definition of higher centrality can now be formulated in terms of matrices. Let $\cube_i^k(a,b) \in A^{2^k}$ be the matrix such that $\face_i^0(\cube_i^k(a,b))$ and $\face_i^1(\cube_i^k(a,b))$ are constant matrices with value $a$ and $b$, respectively. Now let $T = (\theta_0, \dots, \theta_{k-1}) \in \Con(\A)^k$ be a tuple of congruences and set 

\[
M(\theta_0, \dots, \theta_{k-1}) = \Sg_{\A^{2^k}} (\{ \cube^k_i(a,b) : i \in k \text{ and } \langle a, b \rangle \in \theta_i\})
\]
We call $M(\theta_0, \dots, \theta_{k-1})$ the \textbf{algebra of $(\theta_0, \dots, \theta_{k-1})$-matrices}. If no confusion can result, we will refer to $M(\theta_0, \dots, \theta_{k-1})$ as $M(T)$, and call its elements $T$-matrices.

Now we can define higher centrality, see \cite{moorhead} for the same definition. 
\begin{defn}\label{def:cen}
Let $\A$ be an algebra, $k \geq 2$, $T = (\theta_0, \dots, \theta_{k-1}) \in \Con(\A)^k$, and $\delta \in \Con(\A)$. For $j\in k$, we say that \textbf{$T=(\theta_0, \dots, \theta_{k-1})$ is centralized at $j$ modulo $\delta$} if the following property holds for all $(\theta_0, \dots, \theta_{k-1})$-matrices $m$:
\begin{enumerate}
\item[(*)] If every $(j)$-supporting line of $m$ is a $\delta$-pair, then the $(j)$-pivot line of $m$ is a $\delta$-pair. 

\end{enumerate}
We write $C(T;j;\delta)$ to indicate that $T$ is centralized at $j$ modulo $\delta$.
\end{defn}

Now, set

\[
[T]_j \coloneqq \Meet \{\delta: C(T;j; \delta) \}.
\]

\section{Day Terms}\label{DayTerms}
The following classical results on congruence modular varieties are needed. For proofs see \cite{Day}, \cite{Gumm} or \cite{FM}. 
\begin{prop}[Day Terms]\label{prop:day}
A variety $\var$ is congruence modular if and only if there exist term operations $m_e(x,y,z,u)$ for $e\in n+1$ satisfying the following identities:

\begin{enumerate}
\item $m_e(x,y,y,x) \approx x$ for each $0\leq e \leq n$,
\item $m_0(x,y,z,u) \approx x$,
\item $m_n(x,y,z,u) \approx u$,
\item $m_e(x,x,u,u) \approx m_{e+1}(x,x,u,u) $ for even $e$, and
\item $m_e(x,y,y,u) \approx m_{e+1}(x,y,y,u)$ for odd $e$

\end{enumerate}
\end{prop}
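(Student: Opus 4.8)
This is Day's theorem \cite{Day}, and the plan is to prove the two implications separately. For the direction that produces the terms from congruence modularity, the natural arena is the free algebra $\mathbf{F} = \mathbf{F}_{\var}(x,y,z,u)$ on four generators. On $\mathbf{F}$ I would introduce the three congruences
\[
\psi = \Cg(x,y)\join\Cg(z,u),\qquad \theta = \Cg(y,z),\qquad \eta = \Cg(x,u)\join\Cg(y,z),
\]
and note $\theta\leq\eta$. From the path $x\mathrel{\Cg(x,y)}y\mathrel{\theta}z\mathrel{\Cg(z,u)}u$ we get $\langle x,u\rangle\in\psi\join\theta$, while trivially $\langle x,u\rangle\in\Cg(x,u)\leq\eta$; hence $\langle x,u\rangle\in(\psi\join\theta)\meet\eta$. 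Applying the modular law in $\Con(\mathbf{F})$ with $\theta\leq\eta$ gives $(\psi\join\theta)\meet\eta=(\psi\meet\eta)\join\theta$, so $\langle x,u\rangle$ lies in the transitive closure of $(\psi\meet\eta)\union\theta$. Concretely, there is a chain $x=w_0,w_1,\dots,w_n=u$ in $\mathbf{F}$ with each $\langle w_e,w_{e+1}\rangle$ in $\psi\meet\eta$ or in $\theta$; merging consecutive links of the same kind and inserting trivial links, I arrange that link $e$ lies in $\psi\meet\eta$ for even $e$ and in $\theta$ for odd $e$.

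Because $\mathbf{F}$ is free, $w_e = m_e^{\mathbf{F}}(x,y,z,u)$ for a quaternary term $m_e$, and taking $m_0$ and $m_n$ to be the first and fourth projections accounts for identities (2) and (3). The remaining identities are read off from three quotients. Every $w_e$ lies in the $\eta$-class of $x$ (all links are $\leq\eta$) and $\mathbf{F}/\eta\cong\mathbf{F}_{\var}(\bar x,\bar y)$ via $x,u\mapsto\bar x$ and $y,z\mapsto\bar y$, so $w_e\equiv_\eta x$ translates to $m_e(x,y,y,x)\approx x$, i.e.\ (1). Since $\mathbf{F}/\psi\cong\mathbf{F}_{\var}(\bar x,\bar u)$ via $x,y\mapsto\bar x$ and $z,u\mapsto\bar u$, the even links $w_e\equiv_\psi w_{e+1}$ translate to $m_e(x,x,u,u)\approx m_{e+1}(x,x,u,u)$, i.e.\ (4); and since $\mathbf{F}/\theta\cong\mathbf{F}_{\var}(\bar x,\bar y,\bar u)$ via $y,z\mapsto\bar y$, the odd links $w_e\equiv_\theta w_{e+1}$ translate to $m_e(x,y,y,u)\approx m_{e+1}(x,y,y,u)$, i.e.\ (5).

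For the converse I would run Day's arithmetical computation. Fix $\mathbf{A}\in\var$ and $\alpha,\beta,\gamma\in\Con(\mathbf{A})$ with $\alpha\leq\gamma$; as $\alpha\join(\beta\meet\gamma)\leq(\alpha\join\beta)\meet\gamma$ holds in any lattice, it suffices to prove the reverse inclusion. Given $\langle a,b\rangle\in(\alpha\join\beta)\meet\gamma$, take an $(\alpha\join\beta)$-chain from $a$ to $b$, pad it to alternate and to begin and end with $\alpha$-links, and induct on the number of $\beta$-links. The base case is a single $\beta$-link $a\mathrel{\alpha}p\mathrel{\beta}q\mathrel{\alpha}b$: set $d_e = m_e^{\mathbf{A}}(a,p,q,b)$, so $d_0=a$ and $d_n=b$ by (2)--(3). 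Since $p\mathrel{\alpha}a$, $q\mathrel{\alpha}b$ and $\alpha\leq\gamma$, the tuples $(a,p,q,b)$ and $(b,b,b,b)$ agree coordinatewise modulo $\gamma$, so by (1) every $d_e$ is $\gamma$-related to $b$, hence to $a$; comparing $(a,p,q,b)$ with $(a,a,b,b)$ modulo $\alpha$ and using (4) gives $\langle d_e,d_{e+1}\rangle\in\alpha$ for even $e$; comparing with $(a,p,p,b)$ modulo $\beta$ and using (5) gives $\langle d_e,d_{e+1}\rangle\in\beta$, hence $\in\beta\meet\gamma$, for odd $e$. Thus $a=d_0,\dots,d_n=b$ witnesses $\langle a,b\rangle\in\alpha\join(\beta\meet\gamma)$.

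The two steps I expect to be delicate are: choosing the triple $\psi,\theta,\eta$ so that exactly one use of modularity suffices and the three quotients reproduce (1), (4), (5) on the nose — the choices are rigid, and other bracketings of the four generators yield nothing — and, in the converse, the inductive step that removes a $\beta$-link from a long chain while keeping the relevant elements $\gamma$-related. For the latter bookkeeping I would follow \cite{Day}, \cite{FM}, or \cite{Gumm} rather than reproduce it here.
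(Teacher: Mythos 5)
The paper offers no proof of this proposition: it is quoted as a classical result with pointers to \cite{Day}, \cite{Gumm} and \cite{FM}, and your proposal reconstructs exactly the standard argument found in those sources. Your forward direction is complete and correct: the congruences $\psi$, $\theta$, $\eta$ on $\mathbf{F}_{\var}(x,y,z,u)$, the single application of the modular law with $\theta\leq\eta$, and the translation of the resulting chain $x=w_0,\dots,w_n=u$ through the quotients $\mathbf{F}/\eta\cong\mathbf{F}_{\var}(\bar x,\bar y)$, $\mathbf{F}/\psi\cong\mathbf{F}_{\var}(\bar x,\bar u)$ and $\mathbf{F}/\theta\cong\mathbf{F}_{\var}(\bar x,\bar y,\bar u)$ do yield identities (1)--(5) exactly as you say.

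The only weak point is the converse. Your base case is right, but the deferred inductive step is the real content of that direction, and the plan you sketch for it --- removing a $\beta$-link ``while keeping the relevant elements $\gamma$-related'' --- cannot be run literally: in a chain $a\mathrel{\alpha}p_1\mathrel{\beta}q_1\mathrel{\alpha}p_2\mathrel{\beta}q_2\mathrel{\alpha}b$ with $\langle a,b\rangle\in\gamma$, the interior elements $q_1,p_2$ need not be $\gamma$-related to the endpoints, so no subchain obtained by deleting a link satisfies the hypotheses of your induction. The classical step instead applies the Day terms to the whole configuration, producing new elements $m_e(\cdot,\cdot,\cdot,\cdot)$ whose consecutive differences are forced into the appropriate congruences (the same mechanism as the paper's Proposition \ref{prop:shift} and the Shifting Lemma), and the induction hypothesis is applied to these shifted pairs rather than to a truncation of the given chain. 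Since you flag this as the delicate step and cite the same three sources the paper itself relies on for the whole proposition, I count it as an acknowledged omission rather than an error, but in a full write-up it is genuinely more than ``bookkeeping.''
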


\begin{prop}[Lemma 2.3 of \cite{FM}]\label{prop:shift}
Let $\var$ be a variety with Day terms $m_e$ for $e\in n+1$. Take $\delta \in \Con(\A)$ and assume $\langle b,d \rangle  \in \delta$. For a tuple $\langle a,c \rangle \in A^2$ the following are equivalent:

\begin{enumerate}
\item $\langle a,c \rangle \in \delta$
\item $\langle m_e(a,a,c,c) , m_e(a,b,d,c) \rangle \in \delta$ for all $e\in n+1$
\end{enumerate}
\end{prop}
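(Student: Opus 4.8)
The statement to prove is Proposition~\ref{prop:shift} (Lemma 2.3 of \cite{FM}), which characterizes membership $\langle a,c\rangle\in\delta$ in terms of the Day terms, under the standing assumption $\langle b,d\rangle\in\delta$.

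The plan is to prove the two implications separately, using the Day identities from Proposition~\ref{prop:day} together with the hypothesis $\langle b,d\rangle\in\delta$ and the fact that $\delta$ is a congruence (so it is preserved by every term operation).

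\medskip

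For the direction $(1)\Rightarrow(2)$: assume $\langle a,c\rangle\in\delta$. Since $\langle b,d\rangle\in\delta$ as well, and each $m_e$ is a term operation, applying $m_e$ coordinatewise to the pairs $\langle a,a\rangle$, $\langle a,b\rangle$, $\langle c,d\rangle$, $\langle c,c\rangle$ — where the first and last are in $\delta$ by assumption and reflexivity, and the middle two are in $\delta$ since $\langle a,b\rangle$ need not be in $\delta$... wait, I need to be careful about which pairs I feed in. The cleaner route is: I want to compare $m_e(a,a,c,c)$ with $m_e(a,b,d,c)$. These differ only in the second and third arguments: $a$ versus $b$, and $c$ versus $d$. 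So it suffices to have $\langle a,b\rangle$ and $\langle c,d\rangle$ related appropriately — but those pairs are not given. Instead I should route through $\langle a,c\rangle\in\delta$ directly. Note $m_e(a,b,d,c)$ and $m_e(c,b,d,a)$ differ only in the first and fourth coordinates (pair $\langle a,c\rangle$ in both slots), hence are $\delta$-related since $\langle a,c\rangle\in\delta$. And $m_e(c,b,d,a)=m_e(?,?,?,?)$... hmm, identity (1) gives $m_e(x,y,y,x)\approx x$, so $m_e(c,b,b,c)=c$ but we have $d$ not $b$ in the third slot. Using $\langle b,d\rangle\in\delta$, $m_e(c,b,d,a)\mathrel{\delta}m_e(c,b,b,a)$; but now the second-and-third pair is $\langle b,b\rangle$, not of the form giving identity (1) since the fourth coordinate is $a\neq c$. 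The right bookkeeping is the standard \cite{FM} argument: combine $\langle a,c\rangle\in\delta$ and $\langle b,d\rangle\in\delta$ to move $m_e(a,a,c,c)$ to $m_e(a,b,d,c)$ in two congruence steps (changing coordinates 2,3 from $a,c$ to $b,d$ requires knowing $\langle a,b\rangle,\langle c,d\rangle\in\delta$, which do \emph{not} hold), so in fact one must instead first apply identity (1) to rewrite $a = m_e(a,b,b,a)$ and $c=m_e(c,b,b,c)$ and then chase. I would carry this out carefully by writing $a=m_e(a,b,b,a)$, then moving the last coordinate $a\to c$ via $\langle a,c\rangle\in\delta$ to get $m_e(a,b,b,c)$, then moving the third coordinate $b\to d$ via $\langle b,d\rangle\in\delta$ to get $m_e(a,b,d,c)$; each step is a single application of the congruence $\delta$ to one argument of the term $m_e$. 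Stringing these three facts together by transitivity of $\delta$ yields $\langle a,m_e(a,b,d,c)\rangle\in\delta$, and symmetrically $\langle c,m_e(a,b,d,c)\rangle\in\delta$ is not what we want — rather we also rewrite $c=m_e(a,a,c,c)$? No: identity (1) is $m_e(x,y,y,x)\approx x$, which does not directly give $m_e(a,a,c,c)$. I would instead observe that for the purposes of $(2)$ it suffices to establish $\langle m_e(a,a,c,c),a\rangle\in\delta$ and $\langle m_e(a,b,d,c),a\rangle\in\delta$ and conclude by transitivity; the first follows because $m_e(a,a,c,c)\mathrel\delta m_e(a,a,a,a)=a$ using $\langle a,c\rangle\in\delta$ twice (coordinates 3 and 4), and the second is the three-step chase just described.

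\medskip

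For the harder direction $(2)\Rightarrow(1)$: assume $\langle m_e(a,a,c,c),m_e(a,b,d,c)\rangle\in\delta$ for all $e\in n+1$. I want to deduce $\langle a,c\rangle\in\delta$. The idea is to telescope over $e$ using the compatibility identities (4) and (5), which say consecutive Day terms agree on the relevant substitutions. Specifically, using identity (2), $m_0(a,a,c,c)=a$ and $m_0(a,b,d,c)=a$, so the $e=0$ hypothesis is vacuous; using identity (3), $m_n(a,a,c,c)=c$ and $m_n(a,b,d,c)=c$, also vacuous — so the content is entirely in the interior steps and the glueing. The telescoping argument: I would show $\langle m_e(a,a,c,c), m_{e+1}(a,a,c,c)\rangle\in\delta$ for every $e$, by cases on parity. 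For even $e$, identity (4) gives $m_e(a,a,c,c)=m_{e+1}(a,a,c,c)$ outright. For odd $e$, identity (5) gives $m_e(x,y,y,u)\approx m_{e+1}(x,y,y,u)$, which is about the substitution $(a,b,b,c)$, not $(a,a,c,c)$; here I bridge using the hypothesis together with $\langle b,d\rangle\in\delta$: $m_e(a,a,c,c)\mathrel\delta m_e(a,b,d,c)$ (hypothesis) $\mathrel\delta m_e(a,b,b,c)$ (since $\langle b,d\rangle\in\delta$, change coordinate 3) $=m_{e+1}(a,b,b,c)$ (identity (5)) $\mathrel\delta m_{e+1}(a,b,d,c)$ (change coordinate 3 back) $\mathrel\delta m_{e+1}(a,a,c,c)$ (hypothesis). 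Chaining these across $e=0,\dots,n-1$ by transitivity gives $\langle m_0(a,a,c,c), m_n(a,a,c,c)\rangle\in\delta$, i.e.\ $\langle a,c\rangle\in\delta$ by identities (2) and (3). The main obstacle is getting the parity bookkeeping exactly right — matching each identity (4)/(5) to the correct argument pattern and inserting the $\langle b,d\rangle$-swaps only where identity (5) forces the $(x,y,y,u)$ shape — but once the parity cases are set up correctly, each individual step is a one-line application of the congruence property or a Day identity, and the conclusion follows by transitivity of $\delta$.
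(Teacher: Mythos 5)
Your proof is correct. The paper itself gives no argument for this proposition --- it is quoted as Lemma 2.3 of \cite{FM}, with proofs deferred to the literature --- and what you give is precisely the standard argument from that source: the easy direction by relating both $m_e(a,a,c,c)$ and $m_e(a,b,d,c)$ to $a$ via the identity $m_e(x,y,y,x)\approx x$ and single-coordinate $\delta$-moves, and the hard direction by the telescoping chain $a=m_0(a,a,c,c)\mathrel{\delta}m_1(a,a,c,c)\mathrel{\delta}\cdots\mathrel{\delta}m_n(a,a,c,c)=c$, where even steps use identity (4) and odd steps use the hypothesis at indices $e$ and $e+1$, the swap $d\mathrel{\delta}b$ in the third coordinate, and identity (5). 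The only criticism is presentational: the $(1)\Rightarrow(2)$ paragraph records several false starts before arriving at the valid chain $m_e(a,a,c,c)\mathrel{\delta}a=m_e(a,b,b,a)\mathrel{\delta}m_e(a,b,b,c)\mathrel{\delta}m_e(a,b,d,c)$, and these should be stripped out in a final write-up.
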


\section{Binary Commutator and the $\Delta$ relation}

The binary commutator of two congruences $\theta_0, \theta_1$ of $\A$ can be developed by examining a special congruence called $\Delta_{\theta_0, \theta_1}$, which is a congruence of $\theta_0$, considered here to be a subalgebra of $\A^2$. The definition of $\Delta_{\theta_0, \theta_1}$ is as follows: 

$$ \Delta_{\theta_0, \theta_1} = \Cg^{\theta_0}(\{ \langle \langle x,x\rangle, \langle 
y,y \rangle \rangle : \langle x,y \rangle \in \theta_1\})$$

It is informative to use the matrices and coordinates introduced earlier. With this perspective in mind, $\theta_0$-pairs become $1$-dimensional matrices, which according to our convention are horizontal lines. The pair $ \langle a, b \rangle$ is $\Delta_{\theta_0, \theta_1}$ related to $\langle c, d \rangle $ if there is a sequence of matrices as in Figure \ref{fig:delta_1}. That is, $\Delta_{\theta_0, \theta_1}$ is the transitive closure of $M(\theta_0, \theta_1)$, where these matrices are considered to be pairs of $\theta_0$-pairs.

Of course, one can also define 

$$\Delta_{\theta_1, \theta_0} = \Cg^{\theta_1}(\{ \langle \langle x,x\rangle, \langle 
y,y \rangle \rangle : \langle x,y \rangle \in \theta_0\})$$

This congruence is the transitive closure of $M(\theta_0, \theta_1)$, where these matrices are now considered to be $\theta_1$-pairs. 

Both $\Delta_{\theta_0, \theta_1}$ and $\Delta_{\theta_1, \theta_0}$ are subalgebras of $\A^{2^2}$, see Figure \ref{fig:delta_1}.
\begin{figure}[ht]

\includegraphics{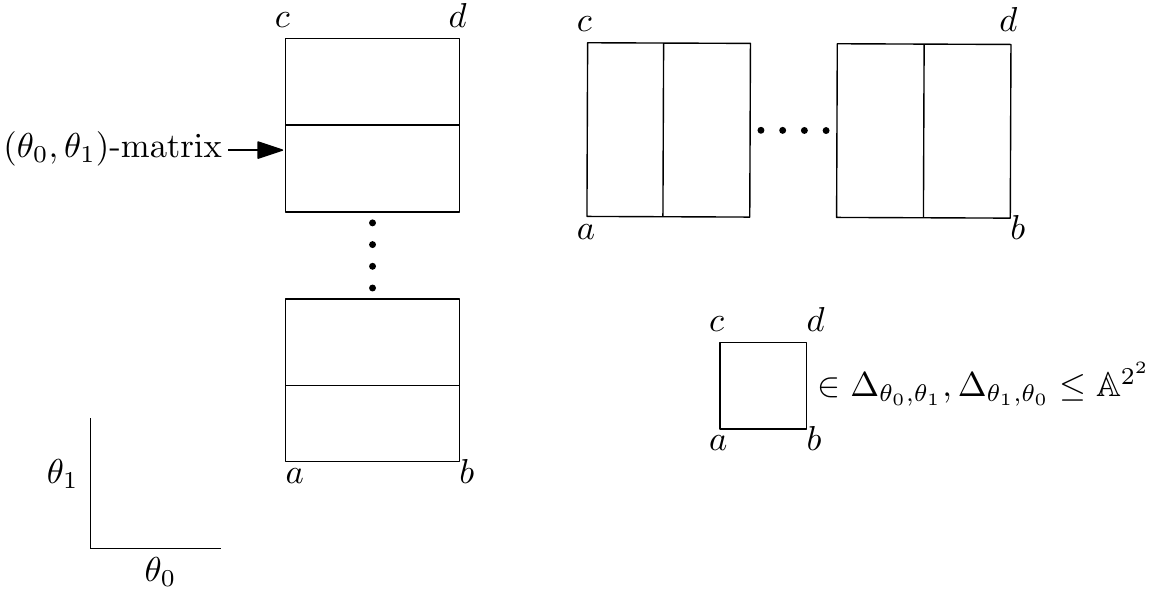}

\caption{}\label{fig:delta_1}
\end{figure}
In fact, an aspect of the symmetry of the binary commutator is given in the following theorem. 

\begin{thm}[See Chapter 4 of \cite{FM}]\label{thm:BinDelSym}

$\Delta_{\theta_0, \theta_1} = \Delta_{\theta_1, \theta_0}$

\end{thm}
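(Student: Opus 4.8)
The plan is to prove the symmetry $\Delta_{\theta_0,\theta_1} = \Delta_{\theta_1,\theta_0}$ by showing that each of these congruences contains the generating set of the other, which suffices since both are congruences on the appropriate subalgebras of $\A^2$ (and the two subalgebras $\theta_0$ and $\theta_1$ both sit inside $\A^{2^2}$ in a compatible way). By symmetry of the roles of $\theta_0$ and $\theta_1$, it is enough to show one inclusion, say $\Delta_{\theta_0,\theta_1} \leq \Delta_{\theta_1,\theta_0}$; equivalently, every $(\theta_0,\theta_1)$-matrix in $M(\theta_0,\theta_1)$, when viewed as a pair of $\theta_1$-pairs (i.e. a pair of vertical lines), lies in the transitive closure $\Delta_{\theta_1,\theta_0}$. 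Since $M(\theta_0,\theta_1)$ is generated by the two families of square generators $\SQuare[x][x][y][y]$ for $\langle x,y\rangle \in \theta_0$ and $\SQuare[x][y][x][y]$ for $\langle x,y\rangle \in \theta_1$, and $\Delta_{\theta_1,\theta_0}$ is already a subalgebra of $\A^{2^2}$ closed under the operations, it is enough to check that these generators belong to $\Delta_{\theta_1,\theta_0}$: the second family is immediate (it is the generating set for $\Delta_{\theta_1,\theta_0}$, read as $\theta_1$-pairs constant along $\theta_1$), and the reflexive generators $\SQuare[x][x][y][y]$ with $\langle x,y\rangle\in\theta_0$ present as $\langle\langle x,y\rangle,\langle x,y\rangle\rangle$, a diagonal pair, hence in any reflexive congruence.

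Wait — that naive argument shows only $M(\theta_0,\theta_1) \subseteq \Delta_{\theta_1,\theta_0}$ as a \emph{set} and that the inclusion respects generators, but $\Delta_{\theta_0,\theta_1}$ is the transitive \emph{closure}, and while $\Delta_{\theta_1,\theta_0}$ is transitively closed as a congruence, the real content is that a typical element of $\Delta_{\theta_0,\theta_1}$ is obtained from the matrix algebra by \emph{horizontal} transitivity steps, and we must see those steps can be re-expressed inside $\Delta_{\theta_1,\theta_0}$. So the actual heart of the proof is: given $\langle\langle a,b\rangle,\langle c,d\rangle\rangle \in \Delta_{\theta_0,\theta_1}$, witnessed by $a \mathrel{\theta_0} b$, $c \mathrel{\theta_0} d$ and a sequence of $(\theta_0,\theta_1)$-matrices chaining $\langle a,b\rangle$ to $\langle c,d\rangle$ horizontally, produce a corresponding witness that $\langle\langle a,c\rangle,\langle b,d\rangle\rangle \in \Delta_{\theta_1,\theta_0}$. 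The key device is the Day terms: one applies the Day operations $m_e$ coordinatewise to the chain of matrices, exactly as in the classical Freese--McKenzie argument (this is where Proposition~\ref{prop:day} and the shift lemma Proposition~\ref{prop:shift} enter). The Day identities $m_e(x,y,y,x)\approx x$ collapse the ``reflexive'' $\theta_0$-generators to diagonal (hence trivial) pairs, while the splitting identities (4) and (5) let consecutive terms $m_e, m_{e+1}$ be glued, so that the long horizontal $\theta_0$-chain telescopes into a bounded number of vertical $\Delta_{\theta_1,\theta_0}$-steps built only from $\theta_1$-generators.

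Concretely, I would proceed as follows. First, reduce to showing $M(\theta_0,\theta_1) \leq \Delta_{\theta_1,\theta_0}$ (as pairs of $\theta_1$-pairs); then, since $\Delta_{\theta_0,\theta_1}$ is generated as a congruence on $\theta_0$ by $M(\theta_0,\theta_1)$ via transitive closure, it suffices to show $\Delta_{\theta_1,\theta_0}$ is closed under exactly the closure operations defining $\Delta_{\theta_0,\theta_1}$ — and transitive closure in the $\theta_0$-direction is the only non-obvious one. Second, take a generic pair $\langle\langle a,b\rangle,\langle c,d\rangle\rangle$ in $\Delta_{\theta_0,\theta_1}$ and fix a matrix $m \in M(\theta_0,\theta_1)$ with $\Line$s as prescribed. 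Third, for each $e \in n+1$ form the matrix $m_e(\, \cdot\, )$ obtained by applying the Day term to four suitable translates/copies of $m$ and its diagonal companions; verify using identities (1)--(3) that this interpolates between a trivial matrix and $m$ itself, and using (4)--(5) that the $e$ and $e+1$ pieces agree on the shared line. Fourth, read each $m_e$-matrix as a $\theta_1$-pair (vertical line) and observe it lies in $\Delta_{\theta_1,\theta_0}$ because it is built from $\theta_1$-generators after the $\theta_0$-parts have been killed; chaining over $e = 0,\dots,n$ and invoking Proposition~\ref{prop:shift} then yields $\langle\langle a,c\rangle,\langle b,d\rangle\rangle\in\Delta_{\theta_1,\theta_0}$. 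The main obstacle is the bookkeeping in the third step: getting the four arguments of $m_e$ lined up so that the Day identities produce precisely a telescoping chain, and checking that at each stage the intermediate matrices genuinely lie in $M(\theta_1,\theta_0)$ rather than merely in $\A^{2^2}$ — this is the standard but delicate ``shifting'' computation of Freese--McKenzie Chapter 4, and I would cite Proposition~\ref{prop:shift} to streamline it rather than reproving it from scratch.
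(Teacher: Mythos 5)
The paper offers no proof of Theorem~\ref{thm:BinDelSym}: it is quoted from Chapter~4 of \cite{FM}, so the only benchmark is the classical Freese--McKenzie argument, and your outline is essentially that argument. You correctly locate the content: both relations are transitive closures of the same set of squares $M(\theta_0,\theta_1)$, read row-wise resp.\ column-wise, so mere containment of generators proves nothing; what must be shown is that a horizontal (row-direction) composition step performed on squares already lying in $\Delta_{\theta_1,\theta_0}$ again produces a square in $\Delta_{\theta_1,\theta_0}$, and your tools for this --- the Day terms, the identity $m_e(x,y,y,x)\approx x$ to kill the $\theta_0$-part, and Proposition~\ref{prop:shift} --- are the right ones. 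The cleanest execution of your step three is to apply Proposition~\ref{prop:shift} directly inside the algebra $\theta_1$ with $\delta=\Delta_{\theta_1,\theta_0}$: if the composite square has columns $a,c$ and one of the two given squares supplies a $\delta$-pair of columns $\langle b,d\rangle$, it suffices to check $\langle m_e(a,a,c,c),m_e(a,b,d,c)\rangle\in\delta$ for every $e$, which the other given square and the idempotence identity deliver; no extra telescoping over $e$ is needed, since Proposition~\ref{prop:shift} already encodes the alternating identities (4)--(5). This is exactly the computation the paper itself carries out one dimension up in Lemmas~\ref{lem:symdel1} and \ref{lem:symdel2}, so your plan is consistent with the paper's methods. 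Two small slips, neither fatal: in your opening paragraph you interchange the two generator families --- read column-wise, it is the $\theta_0$-generators (constant columns $x$ and $y$) that become the generating pairs $\langle\langle x,x\rangle,\langle y,y\rangle\rangle$ of $\Delta_{\theta_1,\theta_0}$, while the $\theta_1$-generators become diagonal pairs --- though the inclusion $M(\theta_0,\theta_1)\subseteq\Delta_{\theta_1,\theta_0}$ is unaffected; and the intermediate objects in your step three should be certified as elements of $\Delta_{\theta_1,\theta_0}$ itself (not of a matrix algebra ``$M(\theta_1,\theta_0)$'', which the paper does not define), which is again precisely what the shifting lemma provides.
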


The $\Delta$ relation provides a nice characterization of the binary commutator of $\theta_0, \theta_1$.

\begin{thm}[Theorem 4.9 of \cite{FM}]\label{thm:DelBin}
For $x, y \in \A$ the following are equivalent:

\begin{enumerate}
\item $\langle x,y \rangle \in [x,y]$

\item $\left[ \begin{array}{cc}
				x&y\\
				y&y
				\end{array}
				\right] \in \Delta_{\theta_0, \theta_1}$ \smallskip
\item $\left[ \begin{array}{cc}
				x&b\\
				y&b
				\end{array}
				\right] \in \Delta_{\theta_0, \theta_1}$
for some $b\in \A$\smallskip
\item $\left[ \begin{array}{cc}
				x&y\\
				c&c
				\end{array}
				\right] \in \Delta_{\theta_0, \theta_1}$
for some $c\in \A$.

\end{enumerate}

\end{thm}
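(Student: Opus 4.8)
The plan is to prove the cycle $(1)\Rightarrow(4)\Rightarrow(3)\Rightarrow(2)\Rightarrow(1)$, or perhaps more symmetrically to establish each of $(2),(3),(4)$ equivalent to $(1)$ directly, using the description of $\Delta_{\theta_0,\theta_1}$ as the transitive closure of $M(\theta_0,\theta_1)$ together with the term-condition characterization $[\theta_0,\theta_1]=\bigwedge\{\delta:C(\theta_0,\theta_1;\delta)\}$. Throughout I write $\theta_0,\theta_1$ for the congruences generated (or containing) $x,y$ as appropriate; note $\langle x,y\rangle\in[\theta_0,\theta_1]$ makes sense since $[\theta_0,\theta_1]\le\theta_0\wedge\theta_1$. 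The first easy observation is that $(2)\Rightarrow(3)$ and $(2)\Rightarrow(4)$ are immediate by taking $b=y$ and $c=y$ respectively, so the real content is $(3)\Rightarrow(1)$, $(4)\Rightarrow(1)$, and $(1)\Rightarrow(2)$.

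First I would handle $(3)\Rightarrow(1)$ and $(4)\Rightarrow(1)$ together. Suppose $h=\SQuare[x][y][y][b]\in\Delta_{\theta_0,\theta_1}$, viewed as a pair of $\theta_0$-lines, i.e.\ $\langle\langle x,y\rangle,\langle y,b\rangle\rangle\in\Delta_{\theta_0,\theta_1}$. Here I want to show $\langle x,y\rangle\in[\theta_0,\theta_1]$, equivalently that $\langle x,y\rangle\in\delta$ for every $\delta$ with $C(\theta_0,\theta_1;\delta)$. Fix such a $\delta$. Since $\Delta_{\theta_0,\theta_1}$ is generated over $\theta_0$ by the generating matrices of $M(\theta_0,\theta_1)$ and closed under the operations, I would argue that $C(\theta_0,\theta_1;\delta)$ says precisely that $\Delta_{\theta_0,\theta_1}$ cannot connect a $\delta$-collapsed $\theta_0$-line to a non-$\delta$-collapsed one while keeping one endpoint of the line fixed along the connection in the appropriate sense — this is exactly Proposition~\ref{prop:shift} applied with the Day terms, which lets me "shift" the square $\SQuare[x][y][y][b]$ into the square $\SQuare[x][y][y][y]$ or detect that its left column is a $\delta$-pair. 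Concretely, the bottom line $\langle y,b\rangle$ is a $\theta_0$-pair that lies in $\delta$ iff... — no, rather: because the matrix is in $M(\theta_0,\theta_1)$ modulo the transitive closure and $C(\theta_0,\theta_1;\delta)$ forces the pivot column of any $T$-matrix whose supporting column is a $\delta$-pair to itself be a $\delta$-pair, I conclude $\langle x,y\rangle\in\delta$. Running this for all such $\delta$ gives $\langle x,y\rangle\in[\theta_0,\theta_1]$. The symmetric argument, reading matrices as $\theta_1$-lines and using $C(\theta_1,\theta_0;\delta)$ together with Theorem~\ref{thm:BinDelSym} to know it is the same $\Delta$, handles $(4)\Rightarrow(1)$.

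For $(1)\Rightarrow(2)$ I would show that $\delta_0:=\{\langle p,q\rangle:\SQuare[p][q][q][q]\in\Delta_{\theta_0,\theta_1}\}$ — or more robustly the set of $\langle p,q\rangle$ for which the corresponding square lies in $\Delta$ — is a congruence contained in $\theta_0\wedge\theta_1$ satisfying $C(\theta_0,\theta_1;\delta_0)$; then minimality of $[\theta_0,\theta_1]$ forces $[\theta_0,\theta_1]\le\delta_0$, which is exactly $(2)$. That $\delta_0$ is an equivalence relation uses that $\Delta_{\theta_0,\theta_1}$ is an equivalence relation on $\theta_0$ (reflexive, symmetric, transitive) and a diagram chase gluing squares; that it is compatible with the operations uses that $\Delta_{\theta_0,\theta_1}$ is a subalgebra of $\A^{2^2}$; and that $C(\theta_0,\theta_1;\delta_0)$ holds is where one feeds generating $T$-matrices into the definition of $\delta_0$ and checks the term condition goes through, again invoking Proposition~\ref{prop:shift} to pass between the "pivot line is a $\delta_0$-pair" statement and membership of the relevant square in $\Delta$.

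The main obstacle I anticipate is the bookkeeping in the two "shift" arguments: getting the Day-term calculation of Proposition~\ref{prop:shift} to interface cleanly with the matrix/transitive-closure description of $\Delta_{\theta_0,\theta_1}$, in particular verifying that $C(\theta_0,\theta_1;\delta)$ is genuinely equivalent to the closure condition "$\Delta_{\theta_0,\theta_1}$ relates $\langle a,c\rangle$ to $\langle b,d\rangle$ as $\theta_0$-lines and $\langle a,c\rangle\in\delta$ implies $\langle b,d\rangle\in\delta$" uniformly over all $\delta\in\Con(\A)$. Once that equivalence is pinned down, the implications are short diagram chases; but establishing it carefully — and correctly handling the fact that $\Delta$ is defined as a congruence-generation (hence only a \emph{reflexive, transitive, symmetric, compatible} closure, not literally the naive transitive closure of $M(\theta_0,\theta_1)$) — is the delicate point, and is presumably where the congruence-modularity hypothesis is actually consumed via the Day terms.
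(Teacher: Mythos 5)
Your overall architecture is the right one — (2) trivially gives (3) and (4); (3),(4) are pulled back to (1) through the term condition; (1) gives (2) by showing the set of witnessed pairs is a congruence that the commutator must lie below — but the step you gesture at in both directions is the actual content of the theorem, and you never supply it. The condition $C(\theta_0,\theta_1;\delta)$ speaks about \emph{single} matrices of $M(\theta_0,\theta_1)$, whereas membership of your square in $\Delta_{\theta_0,\theta_1}$ only yields a chain of $\theta_0$-lines in which consecutive lines form $(\theta_0,\theta_1)$-matrices. Along such a chain the individual right-hand (supporting) segments $\langle v_k,v_{k+1}\rangle$ need not be $\delta$-pairs even though the composite pair $\langle b,b\rangle$ is, so the sentence ``because the matrix is in $M(\theta_0,\theta_1)$ modulo the transitive closure and $C(\theta_0,\theta_1;\delta)$ forces the pivot column of any $T$-matrix whose supporting column is a $\delta$-pair to be a $\delta$-pair, I conclude $\langle x,y\rangle\in\delta$'' is a non sequitur: no single matrix in the chain has a $\delta$-collapsed supporting column. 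Bridging this is exactly the binary analogue of the passage from the term condition to the transitive term condition (Theorem \ref{thm:transimregcen}), and it requires an induction along the chain in which the Day terms are applied to whole strips (the shift rotations of Lemma \ref{lem:comrot}), producing a new chain whose matrices do have constant, hence $\delta$-related, supporting columns; only then does Proposition \ref{prop:shift} recover the pivot. A single application of Proposition \ref{prop:shift} to the final square, which is all your sketch offers, cannot do this. You correctly flag this as ``the delicate point \dots\ where modularity is consumed,'' but flagging it is not proving it, and it is the whole theorem.

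The same criticism applies to your $(1)\Rightarrow(2)$ step. The plan (show $\delta_0$ is a congruence with $C(\theta_0,\theta_1;\delta_0)$ and invoke minimality) is sound and matches how the paper treats the ternary case in Theorem \ref{thm:maindeltathm}, but two of your three verifications are not formalities: symmetry and transitivity of $\delta_0$ are themselves Day-term diagram chases inside $\Delta_{\theta_0,\theta_1}$, and $C(\theta_0,\theta_1;\delta_0)$ cannot be checked ``by feeding generating $T$-matrices into the definition,'' because the term condition holds \emph{vacuously} on the generators (their columns are either constant or equal), so checking generators establishes nothing; one must take an arbitrary matrix of $M(\theta_0,\theta_1)$ with $\delta_0$-related supporting column, splice it in $\Delta_{\theta_0,\theta_1}$ with the square witnessing that $\delta_0$-relation, and use transitivity, compatibility and the shift lemma to manufacture the square witnessing that the pivot column lies in $\delta_0$. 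Finally, note that the paper itself does not prove this statement — it quotes it as Theorem 4.9 of \cite{FM} — so the fair benchmark is the paper's ternary analogue; measured against that, your proposal reproduces the outline but omits precisely the machinery of Sections 5--6 that makes the outline into a proof.
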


The first portion of this note is devoted to extending Theorem \ref{thm:DelBin} to the ternary commutator.

\section{Complexes of Matrices}

Let $\A$ be an algebra and take $(\theta_0, \theta_1, \theta_2) \in \Con(\A)^3$. We begin this section defining what we call a $(\theta_0, \theta_1, \theta_2)$-\textbf{matrix complex}. Informally, these are blocks that are built from $(\theta_0, \theta_1, \theta_2)$-matrices. Formally, take some integers $n_0, n_1, n_2 \geq 2$, and set 
$$ \mathcal{C}_{n_0, n_1, n_2} =  A^{n_0\times n_1\times n_2}$$
Now take 
$$\mathcal{C} = \Union_{n_0, n_1, n_2 \geq 2} \mathcal{C}_{n_0, n_1, n_2}$$
We call $\mathcal{C}$ the set of \textbf{$3$-dimensional complexes} of the algebra $A$, and $C_{n_0, n_1, n_2}$ the set of \textbf{complexes with dimensions $(n_0, n_1, n_2)$}.

For $ f=(f_0, f_1, f_2) \in n_0\times n_1 \times n_2$ and $g = (g_0, g_1, g_2) \in 2^3 = 2\times 2 \times 2$ let $f+g = (f_0 +g_0, f_1+g_1, f_2 +g_2)$. Let $f = (f_0, f_1, f_2) \in n_0 \times n_1 \times n_2 $ be such that for each $i \in 3$ we have $f_i < n_i-1$. Now define 
$$ \Mat_f : \mathcal{C}_{n_0, n_1, n_2} \rightarrow A^{2^3}$$ 
to be the map defined by 

$$  (\Mat_f(a))_g = a_{f+g}.$$

The image of such a map is called a \textbf{component matrix}. Now define a \textbf{$(\theta_0, \theta_1, \theta_2)$-matrix complex with dimensions $(n_0, n_1, n_2)$} to be a complex $a \in \mathcal{C}_{n_0, n_1, n_2}$ with the property that $  \Mat_f(a) \in M(\theta_0, \theta_1, \theta_2)$ for each map $ \Mat_f $. The collection of all $(\theta_0, \theta_1, \theta_2)$-complexes with dimensions $(n_0, n_1, n_2)$ is called $\mathcal{C}_{n_0, n_1, n_2}(\theta_0, \theta_1, \theta_2)$ and the collection of all $(\theta_0, \theta_1, \theta_2)$-complexes is called $\mathcal{C}(\theta_0, \theta_1, \theta_2)$. 

We can take cross-sections of complexes in the same way as we do for matrices. Consider $c \in \mathcal{C}_{n_0, n_1, n_2}$. Let $D$ be a tuple that lists some coordinates in order and $R$ be a tuple that specifies a value that is assigned to each corresponding coordinate in $D$. Set $\crsec_D^R(c)$ to be the lower dimensional complex obtained by projecting $c$ onto those factors indexed by functions that extend the partial function specified by $D$ and $R$. We use the names $\Line$ and $\Square$ in the same way as for complexes. A typical complex, one of its component matrices and a cross-section square are shown in Figure \ref{fig:complex_1}.
\begin{figure}[ht]

\includegraphics{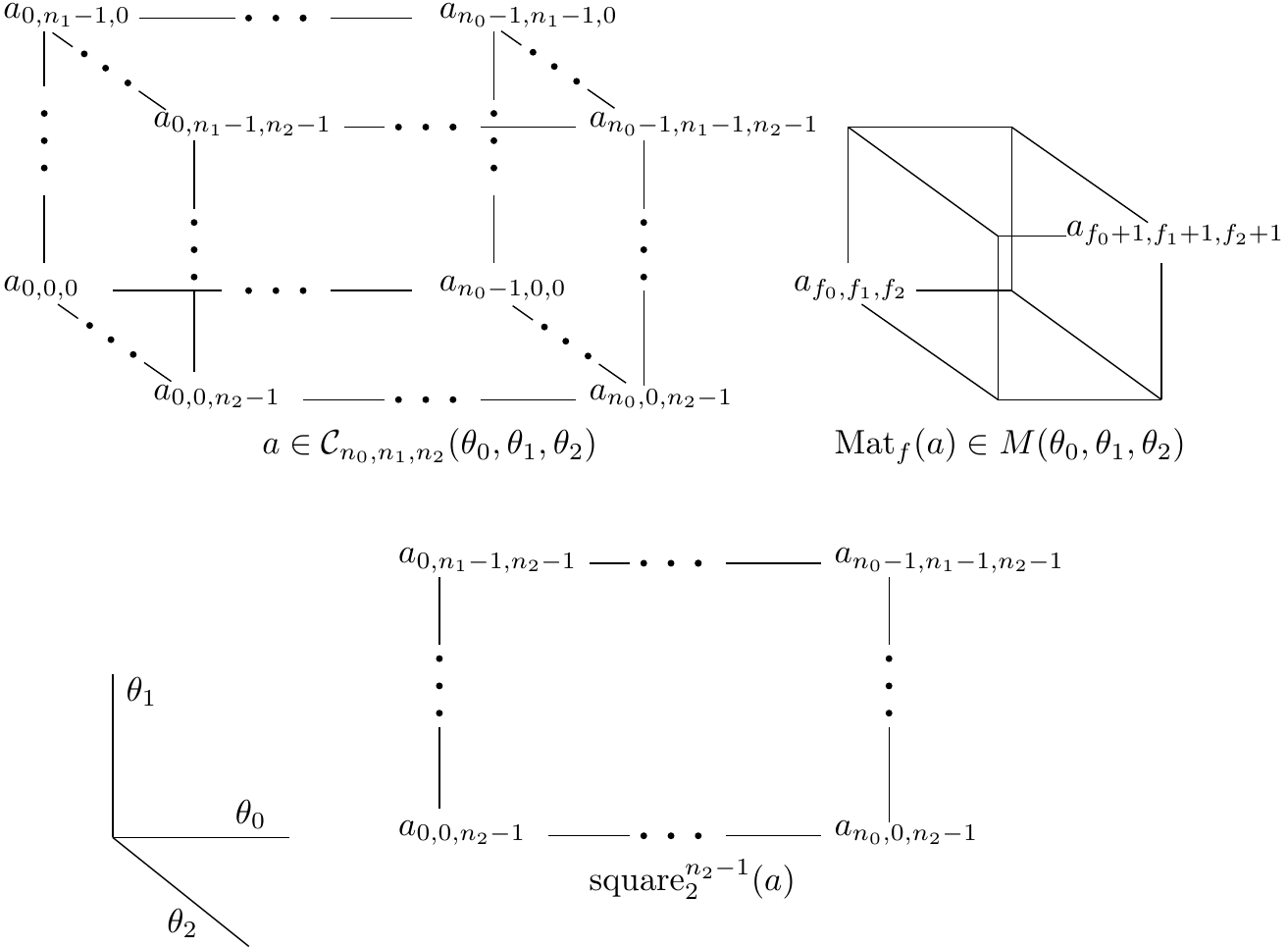}

\caption{}\label{fig:complex_1}
\end{figure}

We now define a map that allows us to identify the corners of a complex. For integers $n_0, n_1, n_2 \geq 2$ and a function $g \in 2^3$, let $\gamma(g) \in n_0 \times n_1 \times n_2$ be the tuple that is $0$ or $n_i-1$ in the $i$-th coordinate if $g(i)$ is $0$ or $1$, respectively. Then set 

$$\Corner_{n_0, n_1, n_2} : \mathcal{C} \rightarrow A^{2^3} $$ to be the map defined by

$$\Corner_{n_0, n_1, n_2}(a)_g = a_{\gamma(g)} $$

We will often refer to this map as $\Corner$ when no confusion over the dimensions of a complex is possible. Finally, we define $$\Corners_{n_0, n_1, n_2}(\theta_0, \theta_1, \theta_2) = \{\Corner(a) : a \in \mathcal{C}_{n_1, n_2, n_3}(\theta_0, \theta_1, \theta_2) \}$$
and we denote the set of all $(\theta_0, \theta_1, \theta_2)$-corners by $\Corners(\theta_0, \theta_1, \theta_2)$.

Notice that because $M(\theta_0, \theta_1, \theta_2) = \mathcal{C}_{2,2,2}(\theta_0, \theta_1, \theta_2) = \Corners_{2,2,2}(\theta_0, \theta_1, \theta_2)$, we have that $M(\theta_0, \theta_1, \theta_2) \subseteq \Corners(\theta_0, \theta_1, \theta_2)$. 

\section{Transitive Term Condition}
We now define a stronger term condition which we call the \textbf{transitive term condition}.

\begin{defn}\label{def:trancen}
We say that \textbf{$(\theta_0, \theta_1, \theta_2)$ is transitively centralized at $j$ modulo $\delta$}  if the following property holds for all $(\theta_0, \theta_1, \theta_2)$-corners $c$:
\begin{enumerate}
\item[(*)] If every $(j)$-supporting line of $c$ is a $\delta$-pair, then the $(j)$-pivot line of $c$ is a $\delta$-pair. 

\end{enumerate}

We abbreviate this property $C_{tr}(\theta_0, \theta_1, \theta_2; j; \delta)$.
\end{defn}

Because every element of $M(\theta_0, \theta_1, \theta_2)$ is a corner, we have that $C_{tr}(\theta_0, \theta_1, \theta_2; j ; \delta)$ implies $C(\theta_0, \theta_1, \theta_2; j; \delta)$. One of the results of \cite{moorhead} is that the ordinary term condition is symmetric, so we may unambiguously write $C(\theta_0, \theta_1, \theta_2 ; \delta)$. As it turns out, the transitive term condition is equivalent to the usual term condition in a modular variety.

\begin{lem}\label{lem:comrot}
Let $\var$ be a modular variety with Day terms $m_0, \dots, m_n$. Take $\A \in \var$ and $(\theta_0, \theta_1, \theta_2) \in \Con(\A)^3$. Choose some coordinates $i \neq j \neq l \in 3$. Take $z \in \mathcal{C}_{n_0, n_1, n_2}(\theta_0, \theta_1, \theta_2) $ such that 

$$\Corners(\Square_i^0) =\left[ \begin{array}{cc}
				c_0& d_0	\\			
				a_0& b_0
				\end{array}
				\right] \text { and }$$ 
$$\Corners(\Square_i^{m_i-1}) =\left[ \begin{array}{cc}
				c_{m_i-1}& d_{m_i-1}	\\			
				a_{m_i-1}& b_{m_i-1}
				\end{array}
				\right], $$ 
where the matrices above are oriented so that rows are indexed by $j$ and the columns are indexed by $l$, with the origin at the bottom left.

For each Day term $m_e $ with $e\in n+1$ there is an $R^e_{j,l}(z) \in \mathcal{C}(\theta_0, \theta_1, \theta_2)$ with dimensions $m_i \geq n_i$, $m_j= \max(n_j -1, 2)$ and $m_l = n_l$, such that 

$$\Corners(\Square_i^0) =\left[ \begin{array}{cc}
				m_e(c_0, c_0, a_0, a_0) & m_e(c_0,d_0,b_0, a_0)	\\			
				c_0& c_0
				\end{array}
				\right] \text{ and }$$
$$\Corners(\Square_i^{m_i-1}) =\left[ \begin{array}{cc}
				m_e(c_{n_i-1}, c_{n_i-1}, a_{n_i-1}, a_{n_i-1}) & m_e(c_{n_i-1},d_{n_i-1},b_{n_i-1}, a_{n_i-1})	\\			
				c_{n_i-1}& c_{n_i-1}
				\end{array}
				\right] $$

We call $R^e_{j,l}(z)$ \textbf{the $e$th shift rotation at $(j,l)$} of $z$.
\end{lem}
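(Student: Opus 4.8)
The idea is to build $R^e_{j,l}(z)$ cross-section-by-cross-section along the $i$-axis, applying the Day term $m_e$ coordinatewise to four copies of the relevant data extracted from $z$. Fix the coordinate $i$ and for each $k$ with $0 \le k \le n_i-1$ consider the square $\Square_i^k(z)$, which by hypothesis looks like $\left[\begin{smallmatrix} c_k & d_k \\ a_k & b_k \end{smallmatrix}\right]$ in the $(j,l)$-orientation. Since $z$ is a $(\theta_0,\theta_1,\theta_2)$-matrix complex, each component matrix is a $T$-matrix, and in particular the edges of every component square are $\theta$-pairs of the appropriate colors; this is what guarantees that quadruples like $(c,d,b,a)$ and $(c,c,a,a)$ appearing in the Day-term arguments are honest cross-sections and that applying $m_e$ to four such cross-sections again produces cross-sections. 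The first step is therefore to define, for each $k$, the square
\[
S_k^e \;=\; \left[\begin{array}{cc}
m_e(c_k,c_k,a_k,a_k) & m_e(c_k,d_k,b_k,a_k)\\
c_k & c_k
\end{array}\right],
\]
and more generally to define the whole new complex by applying $m_e$ to the four complexes obtained from $z$ by the appropriate "face maps" in the $j$-direction — roughly, the complex whose $(j,l)$-squares are the top-left, top-right, bottom-right, bottom-left corners of the $j$-pivot and $j$-supporting structure of $z$. Because $\mathcal C(\theta_0,\theta_1,\theta_2)$ is closed under the coordinatewise action of polynomial (indeed term) operations — each $\Mat_f$ of the result is $m_e$ applied to four $T$-matrices, hence a $T$-matrix since $M(T)$ is a subalgebra of $\A^{2^k}$ — the result $R^e_{j,l}(z)$ is again a $(\theta_0,\theta_1,\theta_2)$-matrix complex.

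Next I would verify the dimension bookkeeping. The new complex keeps the $i$- and $l$-extents of $z$ (so $m_i \ge n_i$, in fact $m_i = n_i$ suffices, and $m_l = n_l$), while in the $j$-direction we have collapsed the data: the bottom row of each new $(j,l)$-square is the constant $c_k$, so effectively the $j$-extent drops by one, giving $m_j = \max(n_j-1,2)$ (the $\max$ with $2$ being needed only in the degenerate case $n_j = 2$, where we cannot form a complex of $j$-dimension $1$ and must pad). Then one checks the two boundary squares $\Square_i^0$ and $\Square_i^{m_i-1}$ of $R^e_{j,l}(z)$ are exactly the stated matrices — this is immediate from the definition of $S_k^e$ at $k=0$ and $k=n_i-1$, using identity (1) of Proposition \ref{prop:day}, $m_e(x,y,y,x)\approx x$, to see that the bottom-left and bottom-right entries collapse to $c_0$ (resp.\ $c_{n_i-1}$) as claimed, and using $m_e(x,y,y,x)\approx x$ again or the edge relations to confirm the displayed top entries.

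The main obstacle is the \textbf{verification that $R^e_{j,l}(z)$ is genuinely a matrix complex}, i.e.\ that \emph{every} component matrix $\Mat_f(R^e_{j,l}(z))$, not just the boundary ones, lies in $M(\theta_0,\theta_1,\theta_2)$. The subtlety is that a component matrix of the new complex is obtained by applying $m_e$ coordinatewise to four objects extracted from $z$, and one must make sure these four objects are themselves component matrices (or cross-sections) of $z$ of the correct dimension and color-orientation — in the $j$-direction this involves identifying the $j$-pivot face and a $j$-supporting face of a component matrix and arguing that the quadruple (pivot$_{j=0}$, support$_{j=1}$-corner, $\ldots$) assembles correctly after the Day term is applied. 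Here the orientation convention fixed in Figure \ref{fig:cubeandfaces} and the precise indexing of $\crsec_D^R$ must be used carefully; once one sets up the correspondence between $\Mat_f(R^e_{j,l}(z))$ and the tuple of four maps $\Mat_{f'}(z)$ it is mechanical, since $M(T) \le \A^{2^3}$ is a subalgebra and $m_e$ is a term. I expect the remaining parts — closure under cross-sections, the boundary computations, and the dimension count — to be routine given Propositions \ref{prop:day} and the definitions in Section 5.
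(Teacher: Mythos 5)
Your overall template (slice $z$ along the $i$-axis, apply $m_e$ coordinatewise to four complexes extracted from $z$, delete the bottom $j$-row, and use $m_e(x,y,y,x)\approx x$ for the corner computation) does match the paper's construction of $E_k=m_e(A_k,B_k,C_k,D_k)$ and $F_k$. But there is a genuine gap at exactly the point you dismiss as mechanical. In the paper's normalization $i=2$, $j=1$, $l=0$, the fourth argument at the $k$th slice must be the complex $D_k$ whose top $j$-row is constantly $z_{0,0,k}$ and whose remaining rows are constantly $z_{0,n_1-1,k}$. Stacking these $D_k$ along the $i$-direction does \emph{not} in general yield a $(\theta_0,\theta_1,\theta_2)$-matrix complex: the square $\left[\begin{smallmatrix} z_{0,0,k} & z_{0,0,k+1}\\ z_{0,n_1-1,k} & z_{0,n_1-1,k+1}\end{smallmatrix}\right]$ need not belong to $M(\theta_2,\theta_1)$ when $n_j>2$, because each of its columns joins two opposite corners of an entire stack of component matrices rather than an edge of a single one. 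So ``$M(T)$ is a subalgebra and $m_e$ is a term'' does not apply: one of your four complexes simply does not exist with the dimensions you assume, and your assertion that ``$m_i=n_i$ suffices'' is false in general. That claim is valid only in the degenerate case $n_j=n_l=2$, which the paper treats separately in Lemma \ref{lem:rotwhen2}, where the offending square is a face of a single component matrix.

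The missing idea is the paper's use of Theorem \ref{thm:BinDelSym}: the offending square does lie in $\Delta_{\theta_2,\theta_1}$, hence in $\Delta_{\theta_1,\theta_2}$, so one can interpolate a chain of $\theta_1$-pairs $\langle u^k_s,v^k_s\rangle$ whose consecutive members form genuine $(\theta_2,\theta_1)$-matrices. This forces the $i$-extent to grow from $n_i$ to some $m_2=\sum_k t_k\geq n_2$; the complexes $\alpha,\beta,\gamma$ are then padded with repeated slices $A_{n_2-1},B_{n_2-1},C_{n_2-1}$ to this new length, a legitimate complex $\epsilon$ is assembled from the interpolating data, and only then is $y=m_e(\alpha,\beta,\gamma,\epsilon)$ formed and its bottom $j$-face deleted. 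This is precisely why the lemma is stated with $m_i\geq n_i$ rather than $m_i=n_i$, and it is where modularity enters beyond the formal application of Day terms; without this step (or some substitute for it) your argument does not go through.
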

\begin{proof}
Without loss, let $i=2$, $l=0$ and $j=1$. Take $z \in \mathcal{C}_{n_0, n_1, n_2}(\theta_0, \theta_1, \theta_2)$. Now, $z$ may be decomposed into the sequence of cross-sections $\Square_2^k(z)$ for $k \in n_2$ as shown in Figure \ref{fig:rotcom1}. 

\begin{figure}[ht]

\includegraphics[scale=.9]{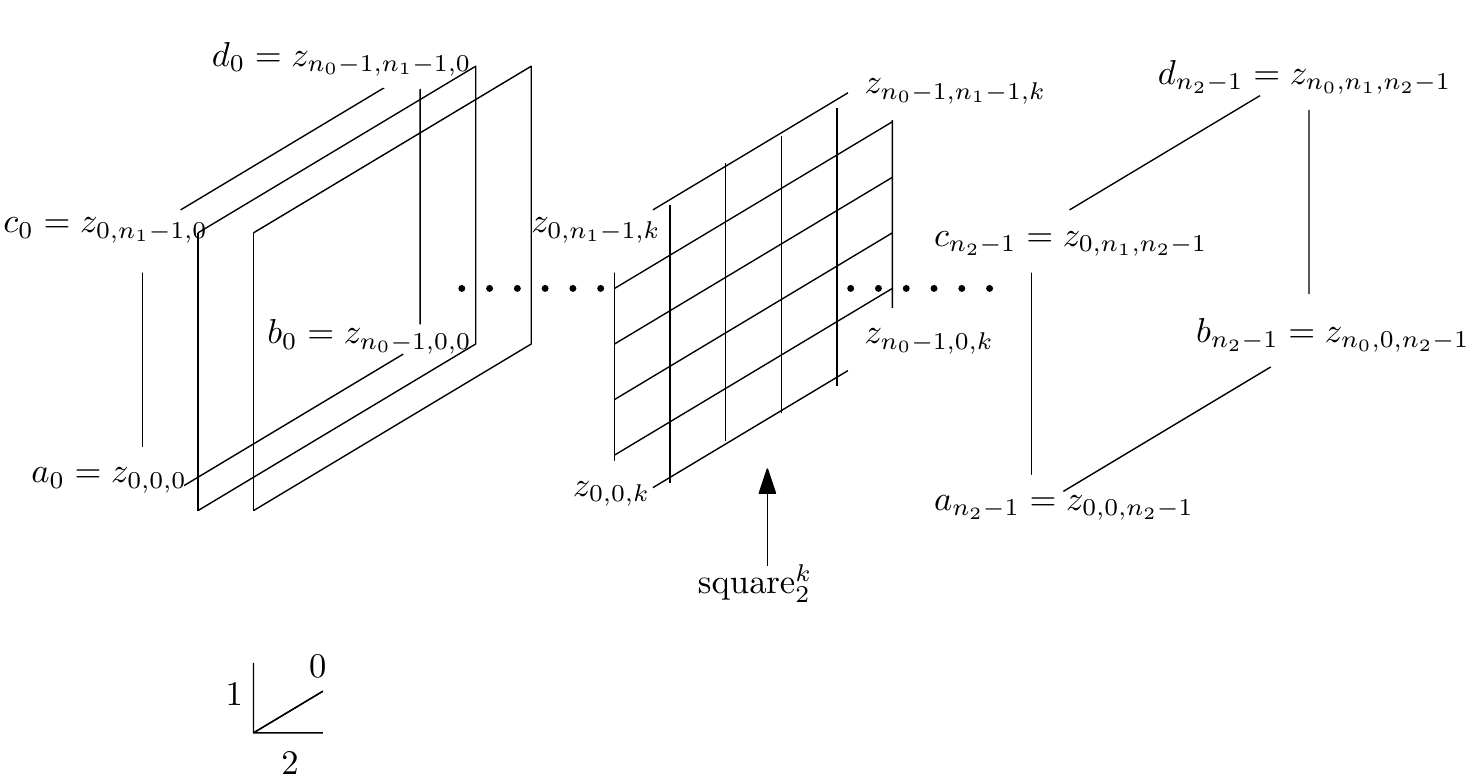}

\caption{}\label{fig:rotcom1}
\end{figure}

For each cross-section $\Square_2^k(z)$, let $A_k, B_k, C_k$ and $D_k$ be the following $(\theta_0, \theta_1)$-complexes (see Figure \ref{fig:rotcom2}): 
\begin{enumerate}
\item $A_k$ takes the value $z_{0, n_1-1, k}$ in each factor.
\item $B_k$ is equal to $\Square^k_2(z)$.
\item $C_k$ is such that 
\begin{itemize}
\item $\Line_1^0(C_k) = \Line_1^{n_1-1}(C_k) = \Line_1^0(\Square^k_2(z))$ and 

\item $\Line_1^k(C_k) = \Line_1^1(\Square^k_2(z))$ for all other $k \in n_1$.
\end{itemize}
\item $D_k$ is such that 
\begin{itemize}
\item
$\Line_1^k(D_k)$ is constantly $z_{0, n_1-1, k}$ for $k \in n_1-1$ and 
\item
$\Line_1^{n_1-1}(D_k)$ is constantly $z_{0,0,k}$.
\end{itemize}
\end{enumerate}

\begin{figure}[ht]

\includegraphics[scale=.85]{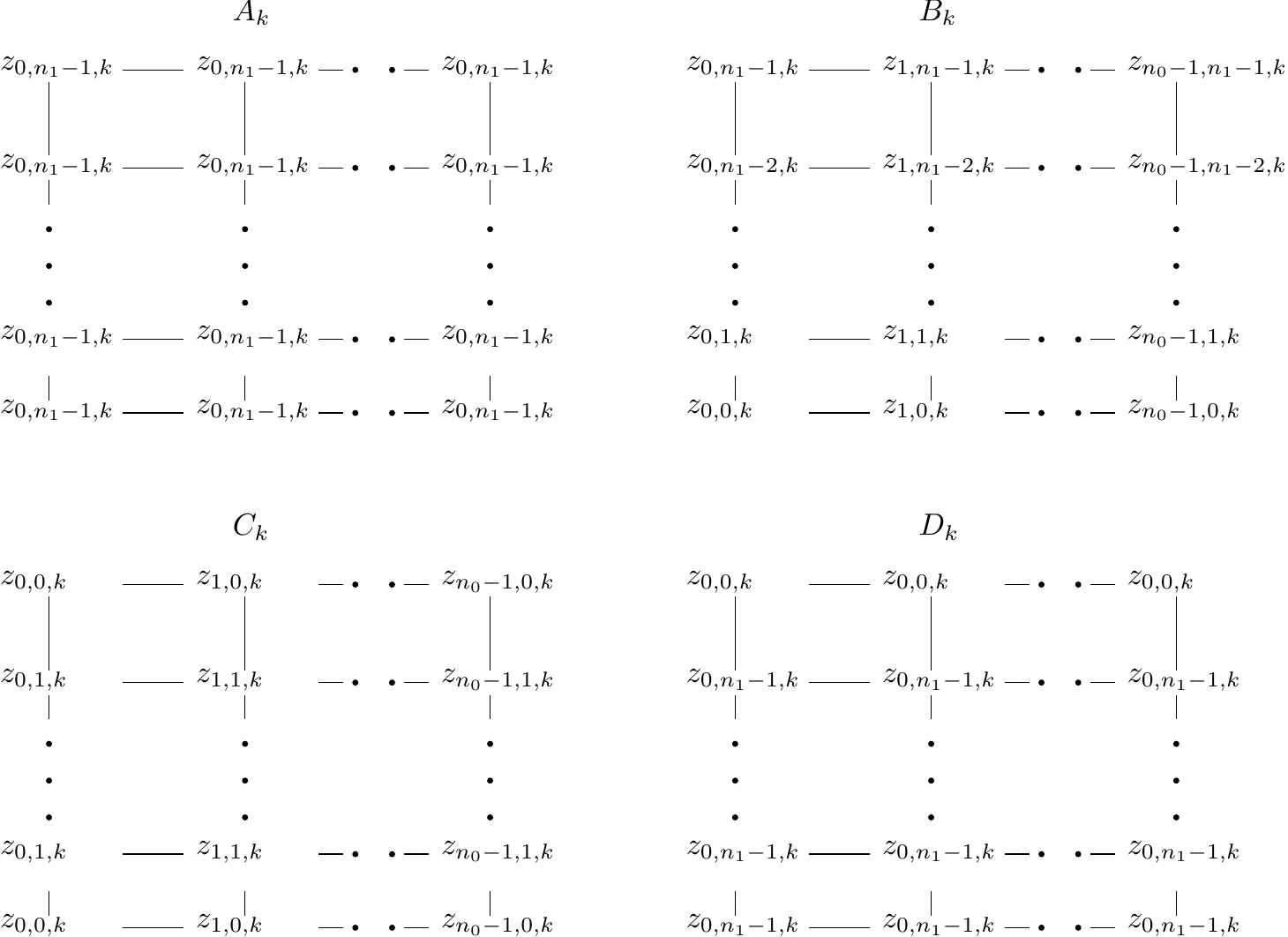}

\caption{}\label{fig:rotcom2}
\end{figure}

Let $E_k =m_e(A_k, B_k,C_k, D_k)$ and let $F_k \in \mathcal{C}_{n_0,n_1-1}(\theta_0, \theta_1)$ be the complex such that $(F_k)_g = (E_k)_{g+(0,1)}$ for all $g \in n_0\times (n_1-1)$. So, $F_k$ is obtained by deleting the bottom row of $E_k$. Applying the identity $m_e(x,y,y,x) \approx x$, we get that $\Corners(F_k)$ is the $(\theta_0, \theta_1)$-matrix shown in \ref{fig:rotcom3}.
\begin{figure}[ht]

\includegraphics{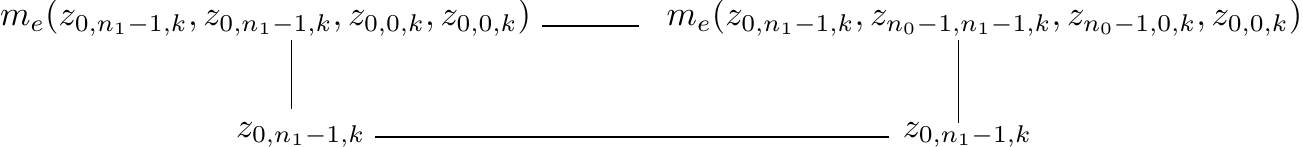}

\caption{}\label{fig:rotcom3}
\end{figure}

We will be done if we can show that there is a complex $y \in \mathcal{C}_{n_0, n_1, m_2}$, for some $m_2 \geq n_2$, such that $\Square_2^0 = E_0$ and $\Square_2^{m_2-1} = E_{n_2-1}$. We will demonstrate this by showing that there are $\alpha, \beta, \gamma, \epsilon \in \mathcal{C}_{n_0, n_1, m_2}$ such that 
\begin{enumerate}
\item $\Square_2^0{\alpha} = A_0$ and $\Square_2^{m_2-1}{\alpha} = A_{m_2-1}$
\item $\Square_2^0{\beta} = B_0$ and $\Square_2^{m_2-1}{\beta} = B_{m_2-1}$
\item $\Square_2^0{\gamma} = C_0$ and $\Square_2^{m_2-1}{\gamma} = C_{m_2-1}$
\item $\Square_2^0{\epsilon} = D_0$ and $\Square_2^{m_2-1}{\epsilon} = D_{m_2-1}$
\end{enumerate}
We then set $y = m_e(\alpha, \beta, \gamma, \delta)$.

So, let $\alpha', \beta', \gamma' \in \mathcal{C}_{n_0, n_1, n_2}$ be such that for each $k \in n_2$

\begin{enumerate}
\item $\Square_2^k{\alpha'} = A_k$
\item $\Square_2^k{\beta'} = B_k$
\item $\Square_2^k{\gamma'} = C_k$.

\end{enumerate}
It is easy to see that $\alpha, \beta, \gamma \in \mathcal{C}_{n_0, n_1, n_2}$. This does not work for the $D_k$, because $$\left[ \begin{array}{cc}
				z_{0,0,k}& z_{0,0,k+1}\\
				z_{0,n_1-1,k}&z_{0,n_1-1,k+1}
				\end{array}
				\right] $$ need not be a $(\theta_2, \theta_1)$-matrix. However, it is an element of $\Delta_{\theta_2, \theta_1}$, and therefore also an element of $\Delta_{\theta_1,\theta_2}$ by Theorem \ref{thm:BinDelSym}. So, there exists for a $t_k \in \nat_{\geq 2}$ some $\theta_1$-pairs $\langle u_0^k, v_0^k \rangle, \dots, \langle u_{t_k-1}^k, v_{t_k-1}^k \rangle$ such that $\langle z_{0,0,k}, z_{0, n_1-1,k} \rangle = \langle u_0^k, v_0^k \rangle$ and $\langle z_{0,0,k+1}, z_{0, n_1-1,k+1} \rangle = \langle u_{t_k-1}^k, v_{t_k-1}^k \rangle$, with consecutive pairs forming matrices belonging to $M(\theta_2, \theta_1)$. See Figure \ref{fig:rotcom4}.
\begin{figure}[ht]

\includegraphics{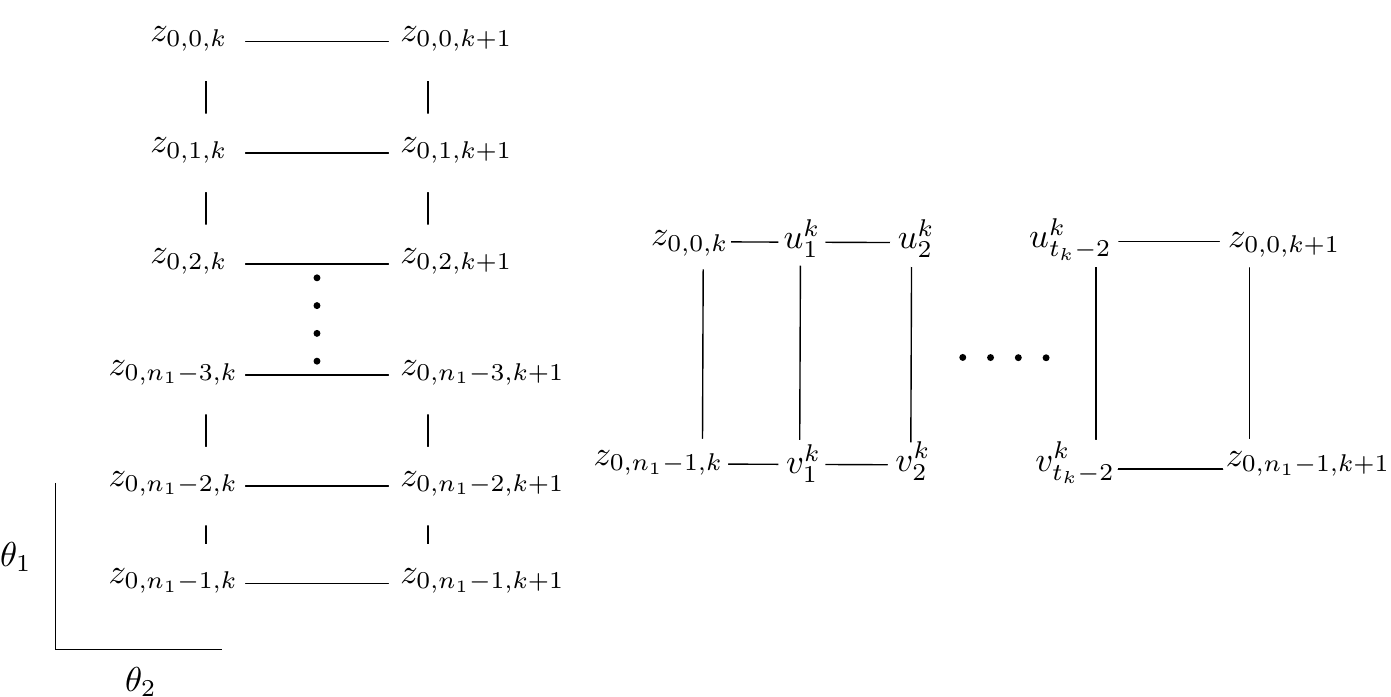}

\caption{}\label{fig:rotcom4}
\end{figure}

For each $s \in t_k$ set $U_s^k \in \mathcal{C}_{n_0, n_1}(\theta_0, \theta_1)$ to be a complex with top row equal to $u_s^k$ and all other rows equal to $v_1^k$. Let $\mathcal{U}$ be the collection of all such $U_s^k$. We order elements of $U$ by the lexicographical order on pairs $(k,s)$. Now, set $m_2 = \sum_{k \in n_2-1} t_k $ and let $\epsilon \in \mathcal{C}_{n_0, n_1, m_2}$ be such that $ \Square_2^r (\gamma) $ is the $r$th element of $\mathcal{U}$, see Figure \ref{fig:rotcom5}. It is clear that $\epsilon$ is actually an element of $\mathcal{C}_{n_0, n_1-1, m_2}(\theta_0, \theta_1, \theta_2)$.

\begin{figure}[ht]

\includegraphics{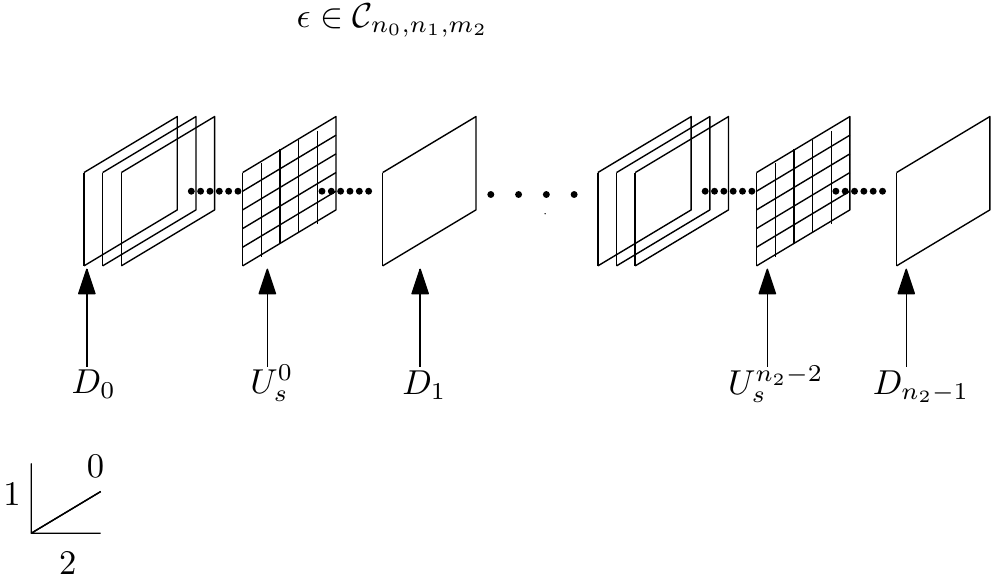}

\caption{}\label{fig:rotcom5}
\end{figure}

Now, $\alpha', \beta, \gamma'$ are $(\theta_0, \theta_1, \theta_2)$-complexes with dimensions $(n_0, n_1, n_2)$. We extend these complexes by $m_2 -n_2$ many copies of $A_{n_2-1}, B_{n_2-1}$ and $C_{n_2-1}$, respectively. That is, define $\alpha, \beta,\gamma \in \mathcal{C}_{n_0, n_1, n_2}(\theta_0, \theta_1, \theta_2)$ so that

\begin{equation*}
    \Square_2^k(\alpha) = \begin{cases}
               \Square_2^k(\alpha')              & \text{if } k \in n_2\\
               
               A_{n_2-1} & \text{otherwise}
           \end{cases}
\end{equation*}

\begin{equation*}
    \Square_2^k(\beta) = \begin{cases}
               \Square_2^k(\beta')              & \text{if } k \in n_2\\
               
               B_{n_2-1} & \text{otherwise}
           \end{cases}
\end{equation*}

\begin{equation*}
    \Square_2^k(\gamma) = \begin{cases}
               \Square_2^k(\gamma')              & \text{if } k \in n_2\\
               
               C_{n_2-1} & \text{otherwise}
           \end{cases}
\end{equation*}

Now, set $y = m_e(\alpha, \beta, \gamma, \epsilon) \in \mathcal{C}_{n_0, n_1, m_2}(\theta_0, \theta_1, \theta_2)$. Notice that $\Square_2^0(y) = E_0$ and $\Square_2^{m_2-1}(y) = F_0$. Set $R^e_{1,0}(z) \in \mathcal{C}_{n_0, n_1-1, m_2}(\theta_0, \theta_1, \theta_2)$ to be the complex defined by 

\begin{equation*}
R^e_{1,0}(z)_f = y_{f+(0,1,0)} \hspace{5mm}  \text{for } f \in n_0\times (n_1 -1) \times m_2
\end{equation*}
So, $x$ is the complex that is obtained by deleting the bottom of face of $y$. We have that $\Square_2^0(R^e_{1,0}(z)) = F_0$ and $\Square_2^{m_2-1}(R^e_{1,0}(z)) = F_{n_2-1}$, so we are done.
\end{proof}

We can say more in case the dimensions of a complex are minimal in two coordinates.

\begin{lem}\label{lem:rotwhen2}
Let $\var$ be a modular variety with Day terms $m_0, \dots, m_n$. Take $\A \in \var$ and $(\theta_0, \theta_1, \theta_2) \in \Con(\A)^3$. Fix an $e \in n+1$ and choose some coordinates $i \neq j \neq l \in 3$. Take $z \in \mathcal{C}_{n_0, n_1, n_2}(\theta_0, \theta_1, \theta_2) $ such that $m_j = m_l =2$. Suppose that for each $k \in n_i$ we have

$$\Square_i^k(z) = \left[ \begin{array}{cc}
				c_k& d_k	\\			
				a_k& b_k
				\end{array}
				\right] $$ 
Then $R^e_{j,l}(z) \in \mathcal{C}_{n_0, n_1, n_2}(\theta_0, \theta_1, \theta_2)$, i.e.\ the dimensions do not change. Moreover, 

$$\Square_i^k(R^e_{j,l}(z)) = \left[ \begin{array}{cc}
				m_e(c_k, c_k, a_k, a_k) & m_e(c_k,d_k,b_k, a_k)	\\			
				c_k& c_k
				\end{array}
				\right] $$ 
for each $k \in n_i$.
\end{lem}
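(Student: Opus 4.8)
The plan is to specialise the construction in the proof of Lemma~\ref{lem:comrot} to the case $n_j = n_l = 2$ and to check that in this case the one step which there increased a dimension is not needed, so that an explicit formula for every cross-section $\Square_i^k$ of the output is obtained at once. As in that proof I take $i = 2$, $j = 1$, $l = 0$; now $z \in \mathcal{C}_{2,2,n_2}(\theta_0,\theta_1,\theta_2)$ and $\Square_2^k(z)$ is genuinely the matrix $\bmat{c_k & d_k \\ a_k & b_k}$, not merely its corner. Specialising the complexes $A_k, B_k, C_k, D_k$ of that proof to $n_0 = n_1 = 2$ gives $A_k = \bmat{c_k & c_k \\ c_k & c_k}$ (the constant $z_{0,1,k} = c_k$), $B_k = \Square_2^k(z)$, $C_k = \bmat{a_k & b_k \\ a_k & b_k}$, and $D_k = \bmat{a_k & a_k \\ c_k & c_k}$.

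The point where the hypothesis does its work is the construction of complexes $\alpha,\beta,\gamma,\epsilon \in \mathcal{C}_{2,2,n_2}(\theta_0,\theta_1,\theta_2)$ whose $\Square_2^k$-cross-sections are $A_k, B_k, C_k, D_k$. In the proof of Lemma~\ref{lem:comrot} the complex realising the $D_k$ could not be kept within the original dimensions because the matrices $\bmat{a_k & a_{k+1} \\ c_k & c_{k+1}}$ need not be $(\theta_2,\theta_1)$-matrices, forcing a detour through $\Delta_{\theta_2,\theta_1}$ and hence a longer complex in the $i$-direction. When $n_j = 2$ these matrices are, up to the obvious coordinate symmetry, exactly the component matrices of the cross-section of $z$ obtained by freezing the $l$-coordinate at $0$, and cross-sections of $(\theta_0,\theta_1,\theta_2)$-complexes are complexes for the corresponding subtuple of congruences; so they already lie in $M(\theta_2,\theta_1)$. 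More uniformly, I would record the elementary fact that blowing up a lower-dimensional $T'$-matrix constantly in a fresh coordinate produces a $T$-matrix (the blow-up map is a homomorphism carrying $\cube$-generators to $\cube$-generators); then $\beta = z$ works for the $B_k$, and $\alpha,\gamma,\epsilon$, being constant blow-ups of $1$- or $2$-dimensional cross-sections of $z$, all lie in $\mathcal{C}_{2,2,n_2}(\theta_0,\theta_1,\theta_2)$, with no change of dimensions.

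It then remains to put $R^e_{j,l}(z) = m_e(\alpha,\beta,\gamma,\epsilon)$, which lies in $\mathcal{C}_{2,2,n_2}(\theta_0,\theta_1,\theta_2)$ because $M(\theta_0,\theta_1,\theta_2) \le \A^{2^3}$, and to evaluate $\Square_i^k(R^e_{j,l}(z)) = m_e(A_k,B_k,C_k,D_k)$ entrywise: the two bottom entries are $m_e(c_k,a_k,a_k,c_k)$ and $m_e(c_k,b_k,b_k,c_k)$, both equal to $c_k$ by the Day identity $m_e(x,y,y,x)\approx x$, and the two top entries are $m_e(c_k,c_k,a_k,a_k)$ and $m_e(c_k,d_k,b_k,a_k)$, which is the asserted matrix. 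One bookkeeping remark is needed: the definition of $R^e_{j,l}$ in Lemma~\ref{lem:comrot} ends by deleting the bottom $j$-face to drop the $j$-dimension from $n_j$ to $n_j - 1$; here that deletion is simply omitted, since $\max(n_j - 1, 2) = 2 = n_j$, and this is the only place where the general recipe and the present statement have to be matched up. The main obstacle is therefore purely organisational --- getting the specialised $A_k,\dots,D_k$ and their realising complexes right and confirming they are $(\theta_0,\theta_1,\theta_2)$-complexes of the original dimensions; after that the cross-section computation is immediate.
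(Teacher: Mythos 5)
Your proposal is correct and takes essentially the same route as the paper's proof: it specializes the construction of Lemma \ref{lem:comrot}, observes that when $n_j=n_l=2$ the squares $\left[\begin{smallmatrix} a_k & a_{k+1}\\ c_k & c_{k+1}\end{smallmatrix}\right]$ are already $(\theta_2,\theta_1)$-matrices (up to a reflection, faces of component matrices of $z$), so that $\alpha,\beta,\gamma,\epsilon$ can be chosen with the original dimensions and no detour through $\Delta_{\theta_2,\theta_1}$, and then evaluates $\Square_i^k(m_e(\alpha,\beta,\gamma,\epsilon))$ entrywise using $m_e(x,y,y,x)\approx x$. The only difference is cosmetic: you spell out the membership and bookkeeping checks (blow-ups and reflections carry matrices to matrices; no bottom-face deletion since $\max(n_j-1,2)=n_j$) that the paper dismisses as easy to see.
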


\begin{proof}
Without loss, assume $i=2, j=1$ and $l=0$. The proof is the same as that of Lemma \ref{lem:comrot}, except that now 

$$\left[ \begin{array}{cc}
				z_{0,0,k}& z_{0,0,k+1}\\
				z_{0,n_1-1,k}&z_{0,n_1-1,k+1}
				\end{array}
				\right] =\left[ \begin{array}{cc}
				a_k& a_k+1\\
				c_k&c_{k+1}
				\end{array}
				\right] \in M(\theta_2, \theta_1)$$
for each $k \in n_2-2$. So we choose $\alpha, \beta, \gamma , \epsilon \in \mathcal{C}_{n_0, n_1, n_2}$ so that 

\begin{enumerate}
\item $\Square_2^k{\alpha'} = A_k = \left[ \begin{array}{cc} 
										c_k & c_k \\
										c_k & c_k \end{array} \right]$
\item $\Square_2^k{\beta'} = B_k = \left[ \begin{array}{cc}
											c_k & d_k\\
											a_k & b_k \end{array} \right]$
\item $\Square_2^k{\gamma'} = C_k = \left[ \begin{array}{cc}
												a_k & b_k \\
												a_k & b_k \end{array} \right]$
\item $\Square_2^k{\epsilon'} = D_k = \left[ \begin{array}{cc} 
												a_k & a_k \\
												c_k & c_k \end{array} \right]$.

\end{enumerate}
for each $k \in n_2-1$.

It is easy to see that $\alpha, \beta, \gamma, \epsilon \in \mathcal{C}_{n_0, n_1, n_2}(\theta_0, \theta_1, \theta_2)$. As before, set $R^e_{j,l}(z) = m_e(\alpha, \beta , \gamma, \epsilon)$. Now we see that $$\Square_2^k(R^e_{j,l}(z)) = \left[ \begin{array}{cc}
				m_e(c_k, c_k, a_k, a_k) & m_e(c_k,d_k,b_k, a_k)	\\			
				c_k& c_k
				\end{array}
				\right], $$
as desired.
\end{proof}

Shift rotations have the nice property of preserving supporting lines. Specifically, we mean that 

\begin{lem}\label{lem:suppreserved}
Let $\var$ be a modular variety with Day terms $m_0, \dots, m_n$. Take $\A \in \var$ and $(\theta_0, \theta_1, \theta_2) \in \Con(\A)^3$.
Suppose $z \in \mathcal{C}(\theta_0, \theta_1, \theta_2)$ is such that every $(j)$-supporting line is a $\delta$-pair, for some $j \in 3$ and $\delta \in \Con(\A)$. Then, for each $e \in n+1$, every $(l)$-supporting line of $R^e_{j,l}$ is a $\delta$-pair.

\end{lem}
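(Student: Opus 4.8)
The plan is to unpack what a $(j)$-supporting line of $z$ contributes to the output complex $R^e_{j,l}(z)$, using the explicit description of the shift rotation from the proofs of Lemmas \ref{lem:comrot} and \ref{lem:rotwhen2}. Fix, without loss of generality, $i=2$, $j=1$, $l=0$, so that $R^e_{j,l}(z)$ is built as $m_e(\alpha,\beta,\gamma,\epsilon)$ with the bottom $j$-face deleted. A $(0)$-supporting line of $R^e_{1,0}(z)$ lives inside some cross-section square, and by the formula for $\Square_2^k(R^e_{1,0}(z))$ its two endpoints have the form $m_e(c,c,a,a)$ and $m_e(c,d,b,a)$ for entries $c,d,a,b$ coming from a cross-section square $\Square_i^k(z) = \left[\begin{smallmatrix} c & d \\ a & b\end{smallmatrix}\right]$ of $z$ (here I am sketching; the general-dimension case of Lemma \ref{lem:comrot} replaces single entries by lines of a component complex, but the argument is the same applied coordinatewise). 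The key observation is that in $z$, the pair $\langle a, b\rangle$ is the bottom row of such a square and $\langle c,d\rangle$ is the top row, and both of these are $(j)$-supporting lines of $z$ once we also invoke that the non-pivot $l$-lines are obtained from genuine $(j)$-supporting lines of $z$ — so by hypothesis $\langle a,b\rangle \in \delta$ and $\langle c,d\rangle \in \delta$.

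With that in hand, the heart of the matter is to show $\langle m_e(c,c,a,a),\, m_e(c,d,b,a)\rangle \in \delta$. First I would observe $\langle b, a\rangle \in \delta$ is exactly the datum ``$\langle b,d\rangle \in \delta$'' needed to apply Proposition \ref{prop:shift} (with the roles $a \mapsto c$, $c\mapsto d$, $b\mapsto a$, $d\mapsto a$ there, or a suitable relabelling). Indeed, Proposition \ref{prop:shift} says that, assuming the relevant pair is in $\delta$, the pair $\langle m_e(p,p,q,q), m_e(p,r,s,q)\rangle$ lies in $\delta$ for all $e$ if and only if $\langle p,r'\rangle \in \delta$ for the appropriate $p, r'$; and the ``appropriate pair'' here unwinds, using $\langle a,b\rangle \in \delta$ and $\langle c,d\rangle \in \delta$ and the fact that $\delta$ is a congruence (closed under the term operations $m_e$ and transitive), to a pair we already know is in $\delta$. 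So the implication runs: the two $\delta$-relations $\langle a,b\rangle,\langle c,d\rangle \in \delta$ feed Proposition \ref{prop:shift} to yield that each output line of the rotation is a $\delta$-pair. The pivot $(0)$-line is the $R$-constant one, which by the $m_e(x,y,y,x)\approx x$ identities collapses the $\gamma,\epsilon$ contributions and reduces to a $\delta$-pair by the same mechanism — one checks it is the degenerate instance of the supporting-line computation.

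The remaining bookkeeping is to confirm that \emph{every} $(0)$-supporting line of $R^e_{1,0}(z)$ arises this way: the rotation is assembled from the blocks $A_k,B_k,C_k,D_k$ (and their interpolating blocks $U_s^k$ inside $\epsilon$), and one must trace each $l$-line of $m_e(\alpha,\beta,\gamma,\epsilon)$ back through these blocks. The blocks $A_k$ and $D_k$ are constant along the relevant directions, $B_k = \Square_2^k(z)$ carries the $(j)$-supporting lines of $z$ directly, and $C_k$ repeats lines of $z$; the $U_s^k$ inside $\epsilon$ come from a $\Delta_{\theta_1,\theta_2}$-decomposition whose endpoints are pivot (hence $\delta$-) lines, and their interior lines are $\theta_1$-pairs but enter $m_e$ only in a coordinate that is killed by $m_e(x,y,y,x)\approx x$ after the bottom-face deletion. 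I expect the main obstacle to be exactly this last point: verifying that the auxiliary blocks used to patch up the $D_k$ (which are not themselves assembled from $(j)$-supporting lines of $z$) do not introduce a $(0)$-supporting line of $R^e_{1,0}(z)$ that fails to be a $\delta$-pair. The resolution is that those blocks only appear in the $\epsilon$-slot of $m_e$ and in a row that gets deleted, so they never contribute an actual supporting line of the output; making this precise requires carefully matching the indexing of Figures \ref{fig:rotcom4} and \ref{fig:rotcom5} against the formula $R^e_{1,0}(z)_f = y_{f+(0,1,0)}$, but involves no new idea beyond the congruence properties of $\delta$ and Proposition \ref{prop:shift}.
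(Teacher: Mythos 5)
Your overall route is the same as the paper's (the proof in the paper is exactly ``read off the corner faces of $R^e_{j,l}(z)$ from Lemma \ref{lem:comrot} and apply Proposition \ref{prop:shift}''), but the key memberships you feed into Proposition \ref{prop:shift} are taken from the wrong lines, and this is a genuine gap. With the orientation of Lemma \ref{lem:comrot} (rows indexed by $j$, columns indexed by $l$, origin at the bottom left), the $(j)$-supporting lines of $z$ visible in the face $\Square_i^0$ are the \emph{columns} $\langle a_0,c_0\rangle$ and $\langle b_0,d_0\rangle$; the rows $\langle a_0,b_0\rangle$ and $\langle c_0,d_0\rangle$ are $(l)$-lines, about which the hypothesis says nothing. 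Your argument asserts ``by hypothesis $\langle a,b\rangle\in\delta$ and $\langle c,d\rangle\in\delta$'' and then builds the Proposition \ref{prop:shift} step on those pairs (and on ``$\langle b,a\rangle\in\delta$''); none of these is given. A sign that something is off: if the rows really were $\delta$-pairs, you would not need the Day terms at all, since $m_e(c,d,b,a)\equiv_\delta m_e(c,c,a,a)$ would follow from compatibility of $\delta$ alone -- the entire point of the shift rotation is that only the direction-$j$ lines of $z$ are controlled. The substitution you propose for Proposition \ref{prop:shift} ($a\mapsto c$, $c\mapsto d$, $b\mapsto a$, $d\mapsto a$) also does not reproduce the pair $\langle m_e(c,c,a,a),m_e(c,d,b,a)\rangle$, so the central verification does not go through as written.

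The correct verification is short. The $(l)$-lines of $\Corner(R^e_{j,l}(z))$ are the two constant pairs $\langle c_0,c_0\rangle$, $\langle c_{n_i-1},c_{n_i-1}\rangle$, the pivot, and the single nontrivial supporting line $\langle m_e(c_0,c_0,a_0,a_0),\,m_e(c_0,d_0,b_0,a_0)\rangle$. Instantiate Proposition \ref{prop:shift} with its variables $(a,b,d,c)$ equal to $(c_0,d_0,b_0,a_0)$: the standing assumption $\langle b,d\rangle\in\delta$ becomes $\langle d_0,b_0\rangle\in\delta$ and condition (1) becomes $\langle c_0,a_0\rangle\in\delta$; both of these are $(j)$-supporting lines of $z$ (they lie in the face where the $i$-coordinate is minimal, so they are not the pivot), hence $\delta$-pairs by hypothesis, and $(1)\Rightarrow(2)$ gives exactly the needed conclusion for every $e$. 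Your closing concern about the interpolating blocks $U^k_s$ inside $\epsilon$ is beside the point for this lemma: the supporting lines in question are read off the corner cube (this is what the transitive term condition and Theorem \ref{thm:transimregcen} use), and Lemma \ref{lem:comrot} already pins down both corner faces of the output, so no supporting line of $R^e_{j,l}(z)$ depends on those interior blocks.
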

\begin{proof}
This follows from Lemmas \ref{lem:comrot} and \ref{prop:shift}.

\end{proof} 

\begin{thm}\label{thm:transimregcen}
$C(\theta_0, \theta_1, \theta_2; \delta)$ implies $C_{tr}(\theta_0, \theta_1, \theta_2; j; \delta)$ for each coordinate $j \in 3$.

\end{thm}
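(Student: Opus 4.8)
The plan is to reduce an arbitrary $(\theta_0,\theta_1,\theta_2)$-corner to a $(\theta_0,\theta_1,\theta_2)$-matrix by a finite sequence of shift rotations, transporting the centrality condition along the way; at the matrix level the transitive term condition coincides with the ordinary one, which holds by hypothesis. Concretely, fix $j\in 3$ and assume $C(\theta_0,\theta_1,\theta_2;\delta)$. Let $c=\Corner(z)$ for some $z\in\mathcal{C}(\theta_0,\theta_1,\theta_2)$ and suppose every $(j)$-supporting line of $c$ is a $\delta$-pair; we must show the $(j)$-pivot line of $c$ is a $\delta$-pair. If $z$ has dimensions $(2,2,2)$ then $z$ is a $(\theta_0,\theta_1,\theta_2)$-matrix and $c=z$, so the assertion is exactly $C(\theta_0,\theta_1,\theta_2;j;\delta)$, which follows from $C(\theta_0,\theta_1,\theta_2;\delta)$ together with the symmetry of the ordinary term condition established in \cite{moorhead}.

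For the inductive step, suppose some coordinate of $z$ exceeds $2$. Apply a shift rotation $R^e_{j',l'}(z)$ whose reduced coordinate $j'$ is the coordinate currently carrying the $\delta$-structure, choosing $j',l'$ (via Lemma \ref{lem:comrot}, or Lemma \ref{lem:rotwhen2} when two coordinates are already $2$) so that the enlarged coordinate $i'$ is the current maximum. By Lemma \ref{lem:suppreserved} the rotated complex still satisfies the hypothesis, now with distinguished coordinate $l'$, and by the explicit description of the rotated corner in Lemma \ref{lem:comrot} together with Proposition \ref{prop:shift} — applied with a suitable supporting line of $c$ in the role of the known $\delta$-pair and with all Day terms $m_e$ handled simultaneously — the $(j)$-pivot line of $c$ is a $\delta$-pair provided the corresponding pivot line of each $R^e_{j',l'}(z)$ is. Iterating brings the two smaller dimensions down to $2$. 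The residual case of a complex of dimensions $(2,2,m)$ — a chain of $(\theta_0,\theta_1,\theta_2)$-matrices glued along one coordinate — is treated by passing to a cross-section in a short direction, which is a $2$-dimensional complex, and invoking the binary theory (Theorems \ref{thm:BinDelSym} and \ref{thm:DelBin}) together with the binary centrality implied by $C(\theta_0,\theta_1,\theta_2;\delta)$.

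I expect the main obstacle to be bookkeeping of two kinds. First, termination: by Lemma \ref{lem:comrot} the leftover dimension $m_i$ only satisfies $m_i\geq n_i$ and can be much larger than $n_i$, so the crude measure $n_0+n_1+n_2$ need not decrease; one should instead use a measure such as the sum of the two smallest dimensions, which forces always picking the rotation whose enlarged coordinate is the current maximum, and this constrains which coordinate may carry the $\delta$-structure, so the order of rotations must be orchestrated with care (likely with a preliminary rotation to move the $\delta$-structure off coordinate $j$, and with the $(2,2,m)$ case peeled off as a genuine base case rather than reduced further). Second, the transfer: one must verify that Proposition \ref{prop:shift} really propagates ``the $(j)$-pivot line lies in $\delta$'' back through the whole sequence of rotations, which is delicate because a single application of the shift lemma only supplies one link of the Day-term chain and must be invoked with the supporting lines — not the pivot line — playing the role of $\langle b,d\rangle$.
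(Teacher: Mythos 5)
Your inductive step is essentially the paper's: rotate in the coordinate carrying the $\delta$-structure, transfer the hypothesis forward by Lemma \ref{lem:suppreserved}, and transfer the conclusion back through Proposition \ref{prop:shift} with a supporting line playing the role of the known $\delta$-pair, quantifying over all Day terms at once. Your termination worry, however, is dissolved in the paper by a simpler choice of measure: the induction is on the two dimensions $n_j,n_l$ with the third coordinate $i$ held fixed, and since the only rotations used are $R^e_{j,l}$ and $R^e_{l,j}$, the blow-up always lands on $n_i$, which is not part of the measure; no orchestration about ``enlarging the current maximum'' is needed, and the $\delta$-structure simply alternates between $j$ and $l$.

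The genuine gap is your residual case of dimensions $(2,2,m)$. In the paper this is not a leftover to be disposed of but the base case and the heart of the proof, and it cannot be reduced further: a rotation of a $(2,2,m)$ complex either keeps the shape $(2,2,m')$ or grows a short coordinate, so one must argue on the chain directly. You propose to pass to a $2$-dimensional cross-section and invoke Theorems \ref{thm:BinDelSym} and \ref{thm:DelBin} together with ``the binary centrality implied by $C(\theta_0,\theta_1,\theta_2;\delta)$.'' No such binary centrality is implied: since $[\theta_0,\theta_1,\theta_2]\leq[\theta_0,\theta_1]$, the ternary term condition modulo $\delta$ is the \emph{weaker} condition, and $C(\theta_0,\theta_1,\theta_2;\delta)$ can hold while $C(\theta_0,\theta_1;\delta)$ fails (take $\delta=[\theta_0,\theta_1,\theta_2]$ in a class-two nilpotent group); moreover Theorems \ref{thm:BinDelSym} and \ref{thm:DelBin} concern $\Delta$ and the binary commutator, not an arbitrary $\delta$, so they cannot stand in for it. What is actually needed, and what the paper does, is a slice-by-slice use of the hypothesis: apply $R^e_{j,l}$ for every Day term $e$; by Lemma \ref{lem:rotwhen2} the dimensions are unchanged and every cross-section $\Square_i^k$ of the rotated chain has constant bottom row, while by Lemma \ref{lem:suppreserved} the $(l)$-supporting lines of its corner are $\delta$-pairs. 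Each component matrix of the rotated chain is then an honest $(\theta_0,\theta_1,\theta_2)$-matrix whose $(l)$-supporting lines are either constant, given by the corner hypothesis, or equal to the $(l)$-pivot of the previous component, so $C(\theta_0,\theta_1,\theta_2;\delta)$ propagates along the chain and yields that the $(l)$-pivot of the rotated corner is a $\delta$-pair for every $e$; only then does Proposition \ref{prop:shift} recover the $(j)$-pivot of the original corner. As written, your argument applies $C(\theta_0,\theta_1,\theta_2;\delta)$ only to a single matrix and never to the component matrices of a chain, so the case that carries the whole proof does not close.
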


\begin{proof}
Take $z \in \mathcal{C}_{n_0, n_1, n_2}(\theta_0, \theta_1, \theta_2)$ such that each $(j)$-supporting line of $z$ is a $\delta$-pair. We assume that $C(\theta_0, \theta_1, \theta_2;\delta)$ holds and will use this assumption to show that the $(j)$-pivot line of $z$ is also a $\delta$-pair with an induction on the dimensions $n_j$ and $n_l$ for $l \neq j$. 

The base case is when both of these dimensions are equal to $2$. Without loss, assume that $j=1$ and $l=0$. Then, for each $e \in n+1$ we have by Lemma \ref{lem:suppreserved} that every $(0)$-supporting line of $R^e_{1,0}(z)$ is a $\delta$-pair. 
By Lemma \ref{lem:rotwhen2}, $R^e_{1,0}(z) \in \mathcal{C}_{2,2,n_2}(\theta_0, \theta_1, \theta_2)$, that is, $\Square_2^k(R_{1,0}^e(z)) \in M(\theta_0, \theta_1)$ for each $k \in n_2$. Moreover, because the $(0)$-supporting line of each $\Square_2^k(R_{1,0}^e(z))$ is a $\delta$-pair we can use the assumption that $C(\theta_0, \theta_1, \theta_2; \delta)$ holds to conclude that the $(0)$-pivot line of $R_{1,0}^e$ is a $\delta$-pair. Because this holds for every $e \in n+1$, we apply Lemma \ref{prop:shift} to conclude that the $(1)$-pivot line of $z$ is a $\delta$-pair. This is shown in Figure \ref{fig:ttermbasecase}, where $\delta$-pairs are drawn as curved lines and constant pairs are drawn in bold. 

The result now follows from an induction on Lemma \ref{lem:comrot}.
\begin{figure}[ht]

\includegraphics{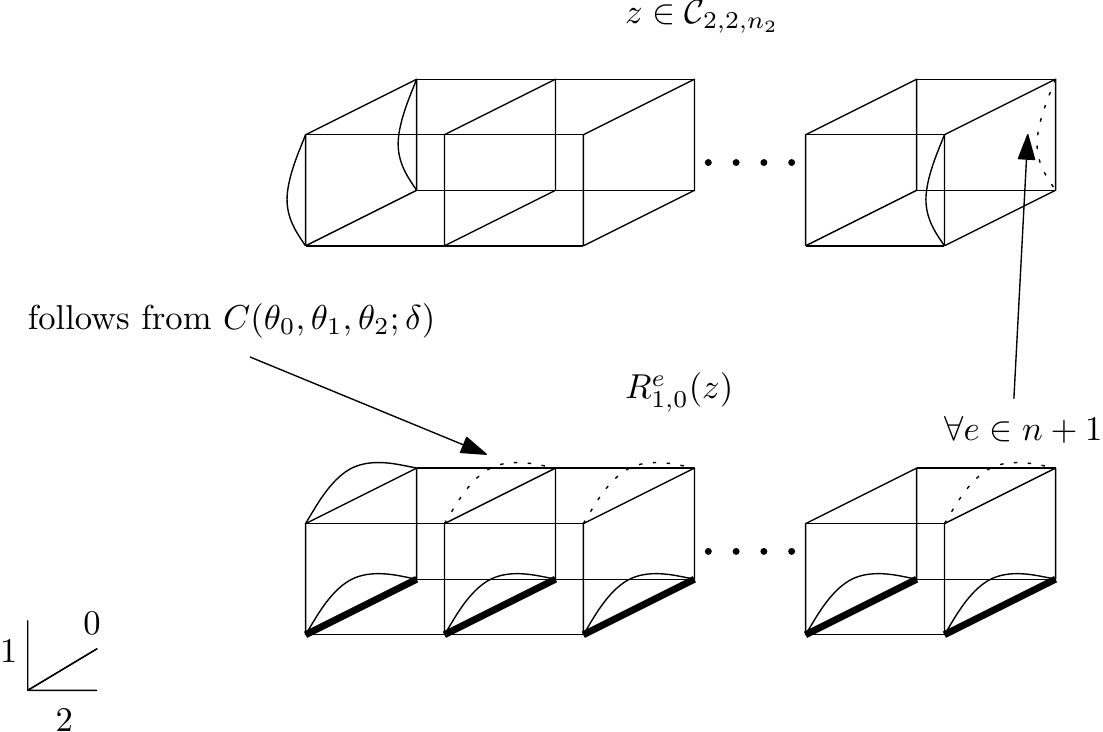}

\caption{}\label{fig:ttermbasecase}
\end{figure}

\end{proof}

\section{ Definition and Properties of $\Delta$}

We define $\Delta(\theta_0, \theta_1, \theta_2)$ in a manner analogous to the $2$-dimensional $\Delta$. We will prove that this definition is equivalent to a transitive closure of a relation that is obtained by looking at the corners of complexes. This perspective is useful for showing properties of $\Delta(\theta_0, \theta_1, \theta_2)$ because of the equivalence of the transitive term condition with the term condition.

Recall that $\Delta_{\theta_i} = \theta_i$, where we think of $\theta_i$ as being the collection of $1$-dimensional $\theta_i$-matrices. As before, $\Delta_{\theta_i, \theta_j} = \Cg^{\Delta_{\theta_i}}( \{ \cube^2_j(x,y): \langle x, y \rangle \in \theta_j \})$. Let $k \in 3$ such that $k\neq i,j$. The algebra $(\Delta_{\theta_{i}, \theta_{j}})^2$ is the second power of a square, so it is naturally coordinatized by $2^3$. That is, to each $\langle a,b \rangle \in (\Delta_{\theta_i, \theta_j})^2$ we associate a matrix $z \in A^{2^3}$ such that $\face_k^0 = a$ and $\face_k^1 = b$. We now set

$$\Delta_{\theta_i, \theta_j, \theta_k} = \Cg^{\Delta_{\theta_i, \theta_j}}( \{ \cube^3_k(x,y): \langle x, y \rangle \in \theta_k \}) \subset  A^{2^3}$$

Without loss, we will show that $\Delta_{\theta_0, \theta_2, \theta_1} = \Delta_{\theta_0, \theta_1, \theta_2}$. The result then follows from the symmetry of the $2$-dimensional $\Delta$.

We begin with the following 

\begin{lem}\label{lem:symdel1}
Suppose that $z_0, z_1 \in \Delta_{\theta_1, \theta_2, \theta_0}$ are such that 

$$ \Square_2^0(z_0) = \left[ \begin{array}{cc}
								d_1 & d_1 \\
								d_0 & d_0 \end{array} \right]\text{, }  \Square_2^1(z_0) = \Square_2^0(z_1)= \left[ \begin{array}{cc}
											a_1 & a_1 \\
											a_0 & a_0 \end{array} \right] \text{ and }$$

$ \Square_2^0(z_1) = \left[ \begin{array}{cc}
								b_1 & c_1 \\
								 b_0& c_0 \end{array} \right].$
								 
Then $ \left[ \begin{array}{cc}
				d_1 &b_1 \\
				d_0 & b_0 \end{array} \right] 
\Delta_{\theta_1, \theta_2, \theta_0}
\left[ \begin{array}{cc}
				d_1 &c_1 \\
				d_0 & c_0 \end{array} \right]. $

\end{lem}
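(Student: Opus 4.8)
The plan is to recognize this as a disguised instance of the shift lemma, Proposition~\ref{prop:shift}, applied inside the algebra $\Delta_{\theta_1,\theta_2}$ for the congruence $\delta:=\Delta_{\theta_1,\theta_2,\theta_0}$. First I would translate the hypotheses. Since $\delta$ is a congruence on $\Delta_{\theta_1,\theta_2}$, each $z_i$ ($i\in 2$) is, by construction, the pair $\langle\face_0^0(z_i),\face_0^1(z_i)\rangle$ of elements of $\Delta_{\theta_1,\theta_2}$, and each element of $\Delta_{\theta_1,\theta_2}$ is in turn the pair of $\theta_1$-pairs given by its two $\face_2$-faces. Using the prescribed cross-sections $\Square_2^0,\Square_2^1$ of $z_0$ and $z_1$ and unwinding these identifications (abbreviating the $\theta_1$-pair $\langle x_0,x_1\rangle$ by $P_x$), one obtains $\face_0^0(z_0)=\langle P_d,P_a\rangle$, $\face_0^0(z_1)=\langle P_a,P_b\rangle$, and $\face_0^1(z_1)=\langle P_a,P_c\rangle$. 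Thus the hypotheses amount exactly to $\langle P_d,P_a\rangle\in\Delta_{\theta_1,\theta_2}$ and $\langle\langle P_a,P_b\rangle,\langle P_a,P_c\rangle\rangle\in\delta$, while the two squares in the conclusion are $\langle P_d,P_b\rangle$ and $\langle P_d,P_c\rangle$.

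Second I would observe that $\langle P_d,P_b\rangle$ and $\langle P_d,P_c\rangle$ genuinely lie in $\Delta_{\theta_1,\theta_2}$ — which is needed before Proposition~\ref{prop:shift} can even be invoked, and is essentially the only step that uses the hypotheses nontrivially. Indeed $\langle P_a,P_b\rangle$ and $\langle P_a,P_c\rangle$ lie in $\Delta_{\theta_1,\theta_2}$, being coordinates of a pair in $\delta\le(\Delta_{\theta_1,\theta_2})^2$; and $\Delta_{\theta_1,\theta_2}$ is a congruence on $\theta_1$, hence an equivalence relation on the set $\theta_1$, so transitivity applied to $\langle P_d,P_a\rangle$ together with each of $\langle P_a,P_b\rangle$ and $\langle P_a,P_c\rangle$ gives $\langle P_d,P_b\rangle,\langle P_d,P_c\rangle\in\Delta_{\theta_1,\theta_2}$.

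Now I would apply Proposition~\ref{prop:shift} to the algebra $\Delta_{\theta_1,\theta_2}$ (a subalgebra of $\A^{2^2}$, so a member of $\var$) and the congruence $\delta$, with, in the notation of that proposition, $b:=\langle P_a,P_b\rangle$ and $d:=\langle P_a,P_c\rangle$ (so $\langle b,d\rangle\in\delta$ as required), and $a:=\langle P_d,P_b\rangle$, $c:=\langle P_d,P_c\rangle$. It then suffices to check that $\langle m_e(a,a,c,c),m_e(a,b,d,c)\rangle\in\delta$ for every $e\in n+1$. Evaluating $m_e$ componentwise, and then entrywise on the two $\A$-slots of each $\theta_1$-pair, the first component of $m_e(a,a,c,c)$ is $m_e(P_d,P_d,P_d,P_d)=P_d$ and the first component of $m_e(a,b,d,c)$ is $m_e(P_d,P_a,P_a,P_d)=P_d$, both equalities being instances of the Day identity $m_e(x,y,y,x)\approx x$, while the second component of each is $m_e(P_b,P_b,P_c,P_c)$. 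Hence $m_e(a,a,c,c)=m_e(a,b,d,c)$, the required pair lies in $\delta$ trivially, and Proposition~\ref{prop:shift} yields $\langle a,c\rangle\in\delta$, i.e.\ $\langle P_d,P_b\rangle\,\Delta_{\theta_1,\theta_2,\theta_0}\,\langle P_d,P_c\rangle$, which is the assertion.

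The step I expect to be the real obstacle is none of these in isolation but the bookkeeping behind the first one: correctly reading the cube-and-cross-section displays into coordinate data for $\Delta_{\theta_1,\theta_2}$ and $\Delta_{\theta_1,\theta_2,\theta_0}$, i.e.\ identifying which coordinate plays the role of the $\delta$-direction and which pairs of entries are the $\face_2$-faces of each square. Once that is settled, the content reduces to the transitivity observation of the second step together with the mildly surprising fact that the shift-lemma condition holds not merely up to $\delta$ but as an outright equality, driven entirely by $m_e(x,y,y,x)\approx x$.
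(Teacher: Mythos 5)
Your proof is correct and is essentially the paper's own argument: you choose the same four elements $a=\langle P_d,P_b\rangle$, $b=\langle P_a,P_b\rangle$, $c=\langle P_d,P_c\rangle$, $d=\langle P_a,P_c\rangle$, invoke Proposition~\ref{prop:shift} in $\Delta_{\theta_1,\theta_2}$ with $\langle b,d\rangle$ supplied by $z_1$, and observe that $m_e(a,a,c,c)=m_e(a,b,d,c)$ outright via $m_e(x,y,y,x)\approx x$, exactly as in the paper. The only difference is cosmetic: you make explicit the (needed, but left implicit in the paper) verification that $\langle P_d,P_b\rangle$ and $\langle P_d,P_c\rangle$ lie in $\Delta_{\theta_1,\theta_2}$ by transitivity.
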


\begin{proof}
We provide in Figure \ref{fig:symdelt_1} a picture of the two matrices $z_0, z_1 \in \Delta_{\theta_1, \theta_2, \theta_0}.$ 

\begin{figure}[ht]

\includegraphics{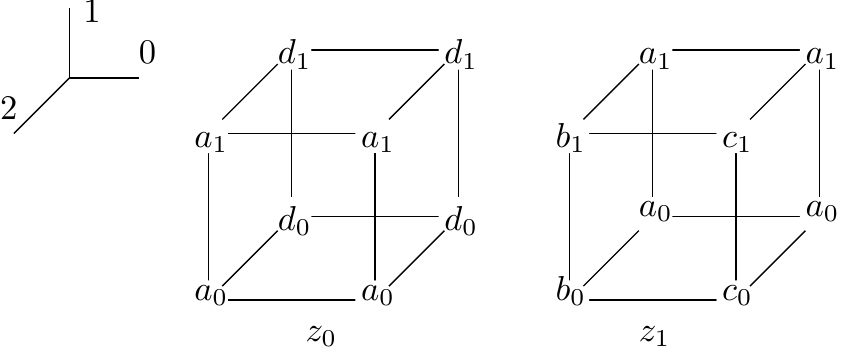}

\caption{}\label{fig:symdelt_1}
\end{figure}

Set 

\begin{enumerate}
\item
$a = \left[ \begin{array}{cc}
d_1 &b_1 \\
d_0 & b_0 \end{array} \right] $ 
\item
				
$b= \left[ \begin{array}{cc}
a_1 &b_1 \\
a_0 & b_0 \end{array} \right] $ 
\item
$c= \left[ \begin{array}{cc}
				d_1 &c_1 \\
				d_0 & c_0 \end{array} \right]$ 
\item		
$d=\left[ \begin{array}{cc}
				a_1 &c_1 \\
				a_0 & c_0 \end{array} \right]$
				
\end{enumerate}

Now, we want to show that 

\[\langle a, b \rangle \in \Delta_{\theta_1, \theta_2, \theta_0}.
\] 
By Lemma \ref{prop:shift} it suffices that $\langle m_e(a,a,c,c) , m_e(a,b,c,d) \rangle \in \Delta_{\theta_1, \theta_2, \theta_0}$ for all $e \in n+1$. This amounts to checking that 

$$ \left[ \begin{array}{cc}
			m_e(d_1, d_1, d_1, d_1) &  m_e(b_1, b_1, c_1, c_1) \\
			m_e(d_0, d_0, d_0, d_0) & m_e(b_0, b_0, c_0, c_0) \end{array} \right] 
			\Delta_{\theta_1, \theta_2, \theta_0} 
			\left[ \begin{array}{cc}
			m_e(d_1, a_1, a_1, d_1) &  m_e(b_1, b_1, c_1, c_1) \\
			m_e(d_0, a_0, a_0, d_0) & m_e(b_0, b_0, c_0, c_0) \end{array} \right], 
			$$
which follows easily from the identity $m_e(x,y,y,x) \approx x$.
\end{proof}

\begin{lem}\label{lem:symdel2}
Suppose that $z_0, z_1 \in \Delta_{\theta_1, \theta_2, \theta_0}$ are such that 

$$ \Square_2^0(z_0) = \left[ \begin{array}{cc}
								x_1 & y_1 \\
								x_0 & y_0 \end{array} \right] \text{, }   \Square_2^1(z_0) = \Square_2^0(z_1)= \left[ \begin{array}{cc}
											u_1 & v_1 \\
											u_0 & v_0 \end{array} \right], $$
and 
$ \Square_2^0(z_1) = \left[ \begin{array}{cc}
								r_1 & s_1 \\
								 r_0& s_0 \end{array} \right] .$
								 
Then $ \left[ \begin{array}{cc}
				x_1 &r_1 \\
				x_0 & r_0 \end{array} \right] 
\Delta_{\theta_1, \theta_2, \theta_0}
\left[ \begin{array}{cc}
				y_1 &s_1 \\
				y_0 & s_0 \end{array} \right] $

\end{lem}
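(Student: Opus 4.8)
The plan is to mimic the proof of Lemma~\ref{lem:symdel1} by applying Proposition~\ref{prop:shift} to reduce the membership $\langle a,b\rangle \in \Delta_{\theta_1,\theta_2,\theta_0}$ (where now $a = \bigl[\begin{smallmatrix} x_1 & r_1\\ x_0 & r_0\end{smallmatrix}\bigr]$ and $b = \bigl[\begin{smallmatrix} y_1 & s_1\\ y_0 & s_0\end{smallmatrix}\bigr]$) to a collection of membership statements of the form $\langle m_e(a,a,c,c),\, m_e(a,b,c,d)\rangle \in \Delta_{\theta_1,\theta_2,\theta_0}$, indexed by $e\in n+1$, for a suitable auxiliary pair $c,d$ of $2$-dimensional matrices whose $\Delta_{\theta_1,\theta_2,\theta_0}$-relatedness is already known. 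The natural choice is to take $c = \bigl[\begin{smallmatrix} u_1 & r_1\\ u_0 & r_0\end{smallmatrix}\bigr]$ and $d = \bigl[\begin{smallmatrix} u_1 & s_1\\ u_0 & s_0\end{smallmatrix}\bigr]$ (or a mild variant), so that the ``$\delta$-pair'' hypothesis $\langle b,d\rangle \in \Delta_{\theta_1,\theta_2,\theta_0}$ needed to invoke Proposition~\ref{prop:shift} holds by an appropriate cross-section of $z_0$ and/or $z_1$.

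First I would record explicitly which $2$-dimensional matrices are already $\Delta_{\theta_1,\theta_2,\theta_0}$-related by virtue of $z_0, z_1$ lying in $\Delta_{\theta_1,\theta_2,\theta_0}$: taking appropriate $\Square$ and $\Line$ cross-sections of $z_0$ and $z_1$, and using that $\Delta_{\theta_1,\theta_2,\theta_0}$ is a congruence of $\Delta_{\theta_1,\theta_2}$ (hence closed under the operations and transitive), one reads off relations among the matrices built from the columns $(x_i,y_i)$, $(u_i,v_i)$, $(r_i,s_i)$. In particular the left square $\Square_2^0(z_0)$ relates $\bigl[\begin{smallmatrix} x_1 & *\\ x_0 & *\end{smallmatrix}\bigr]$-type data to $\bigl[\begin{smallmatrix} y_1 & *\\ y_0 & *\end{smallmatrix}\bigr]$-type data through the middle square $\Square_2^1(z_0)=\Square_2^0(z_1)$, and the gluing of $z_0$ and $z_1$ along that common face lets me chain these. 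Then I would verify, using only the Day identity $m_e(x,y,y,x)\approx x$ and items (2),(4),(5) of Proposition~\ref{prop:day} where needed, that the matrices $m_e(a,a,c,c)$ and $m_e(a,b,c,d)$ simplify to expressions whose $\Delta_{\theta_1,\theta_2,\theta_0}$-relatedness is an instance of the relations collected in the first step (applied entrywise through the congruence $\Delta_{\theta_1,\theta_2,\theta_0}$). Finally, Proposition~\ref{prop:shift} closes the argument.

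The main obstacle I anticipate is bookkeeping: choosing $c$ and $d$ so that simultaneously (i) $\langle b,d\rangle$ is visibly in $\Delta_{\theta_1,\theta_2,\theta_0}$, (ii) $\langle a,c\rangle$ has the right ``shift'' shape, and (iii) after applying $m_e$ and the identity $m_e(x,y,y,x)\approx x$, what remains is exactly a known relation rather than something requiring a fresh appeal to transitivity. It may be that a single auxiliary pair does not suffice and one must interpolate through two or three intermediate matrices, invoking transitivity of $\Delta_{\theta_1,\theta_2,\theta_0}$ and the fact that it is a subalgebra of $\A^{2^3}$. I would also double-check the orientation conventions — which coordinate indexes rows and which indexes columns in each $\Square_2^k$ — since a transposition there changes which congruence ($\theta_1$ versus $\theta_2$) governs which direction, and the symmetry $\Delta_{\theta_i,\theta_j}=\Delta_{\theta_j,\theta_i}$ of Theorem~\ref{thm:BinDelSym} is what ultimately licenses treating the ambient $2$-dimensional object flexibly.
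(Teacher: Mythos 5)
Your overall instinct---reduce the target relation to the $m_e$-conditions of Proposition \ref{prop:shift} and discharge them with the Day identity $m_e(x,y,y,x)\approx x$---is the same engine the paper's proof runs on, but as written your plan has two concrete gaps. First, the auxiliary pair. The only $\Delta_{\theta_1,\theta_2,\theta_0}$-relations visibly supplied by the hypotheses are the cubes themselves re-read along coordinate $0$: $z_0$ gives $\bigl[\begin{smallmatrix} x_1 & u_1\\ x_0 & u_0\end{smallmatrix}\bigr]\,\Delta_{\theta_1,\theta_2,\theta_0}\,\bigl[\begin{smallmatrix} y_1 & v_1\\ y_0 & v_0\end{smallmatrix}\bigr]$ and $z_1$ gives $\bigl[\begin{smallmatrix} u_1 & r_1\\ u_0 & r_0\end{smallmatrix}\bigr]\,\Delta_{\theta_1,\theta_2,\theta_0}\,\bigl[\begin{smallmatrix} v_1 & s_1\\ v_0 & s_0\end{smallmatrix}\bigr]$. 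Your proposed $d=\bigl[\begin{smallmatrix} u_1 & s_1\\ u_0 & s_0\end{smallmatrix}\bigr]$ splices a column of the common face onto a column of $z_1$'s far face, and no pair involving it is a cross-section of $z_0$ or $z_1$; the ``known $\delta$-pair'' hypothesis of Proposition \ref{prop:shift} is therefore not available for your choice, and establishing it would be about as hard as the lemma itself. The choice that works is $b=\bigl[\begin{smallmatrix} u_1 & r_1\\ u_0 & r_0\end{smallmatrix}\bigr]$, $d=\bigl[\begin{smallmatrix} v_1 & s_1\\ v_0 & s_0\end{smallmatrix}\bigr]$, i.e.\ $z_1$ itself viewed along direction $0$. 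Relatedly, your displayed condition $\langle m_e(a,a,c,c),m_e(a,b,c,d)\rangle$ with target $\langle a,b\rangle$ does not match the format of Proposition \ref{prop:shift}, in which the already-known pair occupies the two middle arguments and the target pair the outer ones; this is not just notation, since it dictates which pair you must control in advance.

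Second, and more substantively, verifying the $m_e$-conditions is not a matter of the Day identity collapsing $m_e(a,b,d,c)$ onto a relation you have already recorded. What the paper does is apply $m_e$ to cubes assembled from $z_0$ and $z_1$ (using $m_e(x,y,y,x)\approx x$ together with the fact that $\Delta_{\theta_1,\theta_2,\theta_0}$ is a congruence of $\Delta_{\theta_1,\theta_2}$) to manufacture two new elements of $\Delta_{\theta_1,\theta_2,\theta_0}$ whose shared face has constant columns, and only then obtains the required pairs by invoking Lemma \ref{lem:symdel1}, which treats exactly that degenerate configuration and is itself proved by a separate appeal to Proposition \ref{prop:shift}. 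Your proposal never uses Lemma \ref{lem:symdel1}, and the fallback you mention---interpolating through a few intermediate matrices and using transitivity---does not substitute for it: without that degenerate-case lemma the ``bookkeeping'' you anticipate does not close.
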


\begin{proof}
We provide in Figure \ref{fig:symdelt_3} a picture of the two matrices $z_0, z_1 \in \Delta_{\theta_1, \theta_2, \theta_0}.$ 

\begin{figure}[ht]

\includegraphics{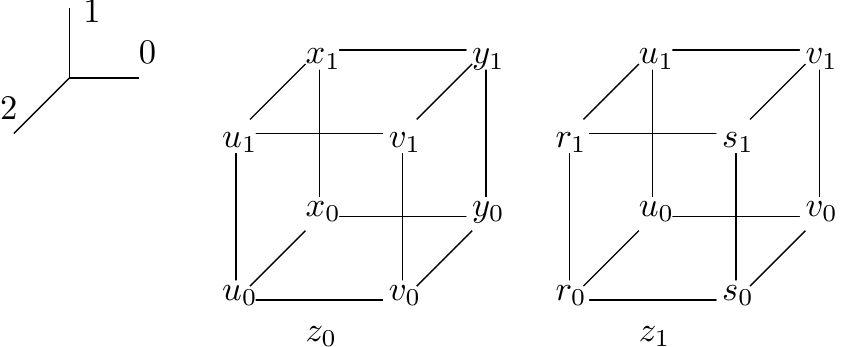}

\caption{}\label{fig:symdelt_3}
\end{figure}

Set 

\begin{enumerate}
\item
$a = \left[ \begin{array}{cc}
x_1 &r_1 \\
x_0 & r_0 \end{array} \right] $ 
\item
				
$b= \left[ \begin{array}{cc}
u_1 &r_1 \\
u_0 & r_0 \end{array} \right] $ 
\item
$c= \left[ \begin{array}{cc}
				y_1 &s_1 \\
				y_0 & s_0 \end{array} \right]$ 
\item		
$d=\left[ \begin{array}{cc}
				v_1 &s_1 \\
				v_0 & s_0 \end{array} \right]$
				
\end{enumerate}

We want to show that $\langle a, b \rangle \in \Delta_{\theta_1, \theta_2, \theta_0}$. By Lemma \ref{prop:shift} it suffices that $\langle m_e(a,a,c,c) , m_e(a,b,c,d) \rangle \in \Delta_{\theta_1, \theta_2, \theta_0}$ for all $e \in n+1$. An examination of $z_1$ reveals that
\begin{enumerate}
\item
$\left[\begin{array}{cc}
			v_1 & s_1\\
			v_0 & s_0 \end{array} \right]
			 \Delta_{\theta_1, \theta_2, \theta_0} 
			\left[\begin{array}{cc}
			u_1 & r_1\\
			u_0 & r_0 \end{array} \right]
		$
\smallskip
\item

$\left[\begin{array}{cc}
			v_1 & v_1\\
			v_0 & v_0 \end{array} \right]
			 \Delta_{\theta_1, \theta_2, \theta_0} 
			\left[\begin{array}{cc}
			u_1 & u_1\\
			u_0 & u_0 \end{array} \right]
		$
\end{enumerate}

Now, because for all $e \in n+1$ the identity $m_e(x,y,y,x) \approx x$ holds, it follows that

\begin{enumerate}
\item
$m_e\left(\left[\begin{array}{cc}
			u_1 & r_1\\
			u_0 & r_0 \end{array} \right]
			,
			\left[\begin{array}{cc}
			u_1 & r_1\\
			u_0 & r_0 \end{array} \right]
			,
			\left[\begin{array}{cc}
			v_1 & s_1\\
			v_0 & s_0 \end{array} \right]
			,
			\left[\begin{array}{cc}
			v_1 & s_1\\
			v_0 & s_0 \end{array} \right]
			\right) \Delta_{\theta_1, \theta_2, \theta_0} 
			\left[\begin{array}{cc}
			u_1 & r_1\\
			u_0 & r_0 \end{array} \right]
		$
\smallskip
\item

$m_e\left(\left[\begin{array}{cc}
			u_1 & r_1\\
			u_0 & r_0 \end{array} \right]
			,
			\left[\begin{array}{cc}
			u_1 & u_1\\
			u_0 & u_0 \end{array} \right]
			,
			\left[\begin{array}{cc}
			v_1 & v_1\\
			v_0 & v_0 \end{array} \right]
			,
			\left[\begin{array}{cc}
			v_1 & s_1\\
			v_0 & s_0 \end{array} \right]
			\right) \Delta_{\theta_1, \theta_2, \theta_0} 
			\left[\begin{array}{cc}
			u_1 & r_1\\
			u_0 & r_0 \end{array} \right]
		$
\end{enumerate}
Therefore, the matrix shown in Figure \ref{fig:symdelt_4} belongs to $\Delta_{\theta_1, \theta_2, \theta_0}$ for all $e \in n+1$.

\begin{figure}[ht]
\includegraphics{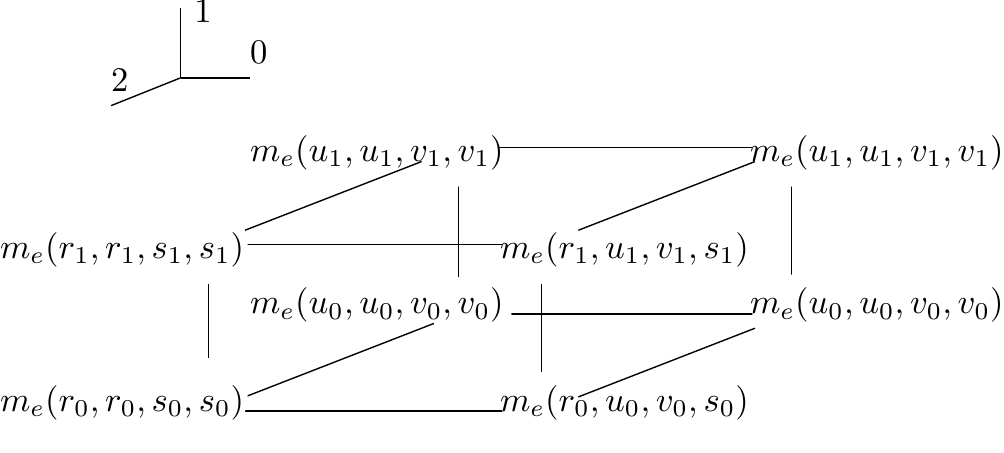}
\caption{}\label{fig:symdelt_4}
\end{figure}

An examination of $z_0$ reveals that 
\begin{enumerate}
\item
$\left[\begin{array}{cc}
			x_1 & u_1\\
			x_0 & u_0 \end{array} \right]
			 \Delta_{\theta_1, \theta_2, \theta_0} 
			\left[\begin{array}{cc}
			x_1 & u_1\\
			x_0 & u_0  \end{array} \right]
		$
\smallskip
\item
$\left[\begin{array}{cc}
			y_1 & v_1\\
			y_0 & v_0 \end{array} \right]
			 \Delta_{\theta_1, \theta_2, \theta_0} 
			\left[\begin{array}{cc}
			y_1 & v_1\\
			y_0 & v_0 \end{array} \right]
		$
\end{enumerate}
We therefore obtain that the matrix depicted in Figure \ref{fig:symdelt_5} belongs to $\Delta_{\theta_1, \theta_2, \theta_0}$. An application of Lemma \ref{lem:symdel1} finishes the proof.
\begin{figure}[!ht]
\includegraphics{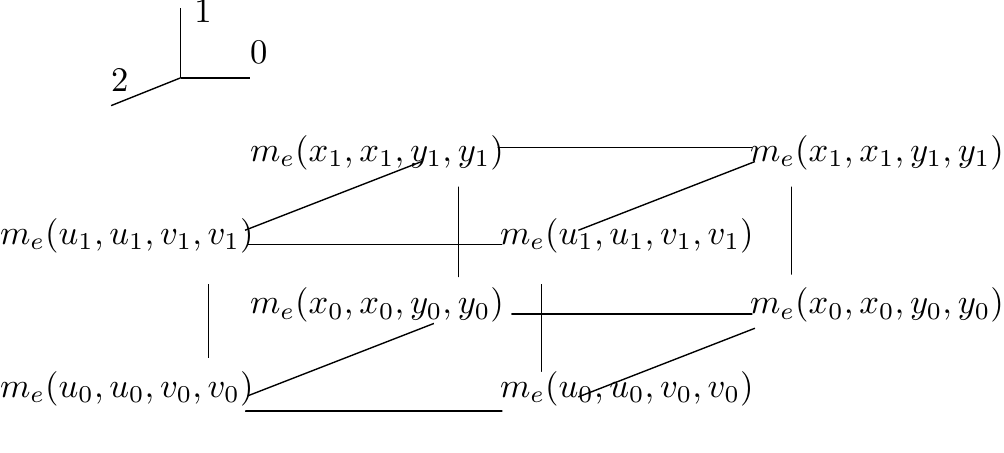}
\caption{}\label{fig:symdelt_5}
\end{figure}

\end{proof}

We now provide another characterization of $\Delta_{\theta_0, \theta_1, \theta_2}$. For $i \in 3$ set 

$$R_i = \{ \langle \Square_i^0(\Corner(z)) , \Square_i^1(\Corner(z)) \rangle : z \in \mathcal{C}_{2,2, n_2}(\theta_0, \theta_1, \theta_2) \text{ for some } n_2 \geq 2\}$$

We will often abuse notation and refer to elements of $R_i$ as elements of $ A^{2^3}$, rather than pairs of elements of $A^{2^2}$. A picture of a complex that corresponds to the pair 
$$\bigg\langle \left[ \begin{array}{cc}
					c&g\\
					a&e \end{array} \right], 
					\left[ \begin{array}{cc}
					d&h\\
					b&f \end{array} \right] \bigg\rangle  \in R_0$$
is given in Figure \ref{fig:symdelt_6}.
\begin{figure}[!ht]
\includegraphics{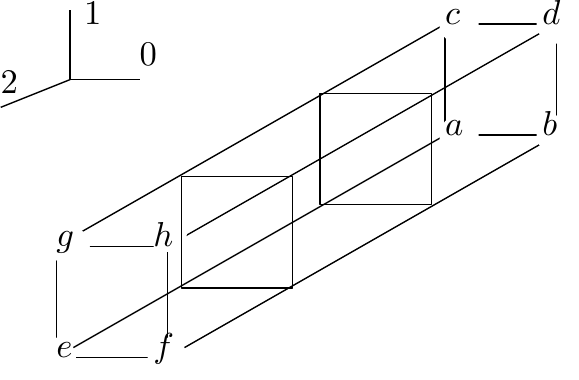}
\caption{}\label{fig:symdelt_6}
\end{figure}

\begin{lem}\label{lem:symdel4}
$\Delta_{\theta_i, \theta_j, \theta_l}$ is the transitive closure of $R_l$.

\end{lem}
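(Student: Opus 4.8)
The plan is to show the two inclusions between $\Delta_{\theta_i,\theta_j,\theta_l}$ and $\tc(R_l)$ separately. For the easier inclusion, $\tc(R_l)\subseteq \Delta_{\theta_i,\theta_j,\theta_l}$, I would first observe that a generic complex $z\in\mathcal{C}_{2,2,n_2}(\theta_i,\theta_j,\theta_l)$ (orienting so that $l$ is the long coordinate) decomposes into its component matrices $\Mat_k(z)\in M(\theta_i,\theta_j,\theta_l)$ for $k\in n_2-1$, each of which, read as a pair of $\Delta_{\theta_i,\theta_j}$-elements stacked in the $l$-direction, is a generating pair of $\Delta_{\theta_i,\theta_j,\theta_l}$ (this is immediate from the definition $\Delta_{\theta_i,\theta_j,\theta_l}=\Cg^{\Delta_{\theta_i,\theta_j}}(\{\cube^3_l(x,y):\langle x,y\rangle\in\theta_l\})$ together with the fact that $M(\theta_i,\theta_j,\theta_l)$ is generated by the $\cube^3_\bullet$-matrices and $\Delta_{\theta_i,\theta_j}$ already absorbs the $\cube^3_i$ and $\cube^3_j$ generators). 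Chaining these matrices by transitivity of $\Delta_{\theta_i,\theta_j,\theta_l}$ shows $\langle \Square_l^0(\Corner z),\Square_l^1(\Corner z)\rangle\in\Delta_{\theta_i,\theta_j,\theta_l}$, hence $R_l\subseteq\Delta_{\theta_i,\theta_j,\theta_l}$; since the latter is transitive, $\tc(R_l)\subseteq\Delta_{\theta_i,\theta_j,\theta_l}$.

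For the reverse inclusion I would show that $\tc(R_l)$ is itself a congruence of $\Delta_{\theta_i,\theta_j}$ containing the generating pairs $\cube^3_l(x,y)$, $\langle x,y\rangle\in\theta_l$, so that $\Delta_{\theta_i,\theta_j,\theta_l}\subseteq\tc(R_l)$ follows by minimality. Containment of the generators is clear since $\cube^3_l(x,y)$ itself is the corner of the height-$2$ complex with both $\Square_l$-faces constant. That $\tc(R_l)$ is an equivalence relation is built in; reflexivity and the fact that $\tc(R_l)$ is contained in $\Delta_{\theta_i,\theta_j}\times\Delta_{\theta_i,\theta_j}$ are routine. The two substantive points are (i) compatibility with the operations of $\Delta_{\theta_i,\theta_j}$ (equivalently, with the operations of $\A$ applied coordinatewise to $2^3$), and (ii) symmetry of $R_l$, i.e.\ that $R_l=R_l^{-1}$ so that $\tc(R_l)$ is symmetric. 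Point (i) is handled by the usual argument: if $z^{(1)},\dots,z^{(r)}$ are complexes witnessing a chain and $f$ is a basic operation, then applying $f$ coordinatewise to the complexes $z^{(s)}$ (which have a common dimension after padding shorter ones by repeating the top $\Square_l$-face, exactly as in the padding step of Lemma~\ref{lem:comrot}) yields complexes in $\mathcal{C}_{2,2,\ast}(\theta_i,\theta_j,\theta_l)$ witnessing that $f$ applied to the endpoints of the chain is again related.

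Point (ii), the symmetry of $R_l$, is the main obstacle and is exactly where the preparatory lemmas are needed. Reversing the $l$-direction of a complex $z\in\mathcal{C}_{2,2,n_2}(\theta_i,\theta_j,\theta_l)$ does not obviously produce another such complex because the $\theta_l$-edges run the wrong way; however, the shift-rotation machinery of Lemmas~\ref{lem:comrot} and~\ref{lem:rotwhen2}, which in the two-minimal-dimension case keeps the complex inside $\mathcal{C}_{2,2,\ast}$ and manipulates the component matrices via the Day terms, together with the symmetry statements Lemma~\ref{lem:symdel1} and Lemma~\ref{lem:symdel2} (whose whole purpose is to let one transport a pair of $\Delta$-matrices past a $\Delta$-matrix), should let me rebuild, from a complex witnessing $\langle p,q\rangle\in R_l$, a complex witnessing $\langle q,p\rangle\in\tc(R_l)$. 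Concretely I would feed the given complex through an $R^e_{j,l}$ for each Day index $e$, use Lemma~\ref{prop:shift} to reduce membership in $\tc(R_l)$ to membership of the shifted pairs, and then apply Lemmas~\ref{lem:symdel1}--\ref{lem:symdel2} to flip orientation one matrix at a time; the identity $m_e(x,y,y,x)\approx x$ collapses the boundary terms exactly as in those lemmas. Once symmetry is in hand, $\tc(R_l)$ is a congruence of $\Delta_{\theta_i,\theta_j}$ containing the generators, and the proof is complete.
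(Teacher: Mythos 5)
You have inverted where the difficulty of this lemma lives, and the way you set up the ``easy'' inclusion quietly changes the statement. In $R_l$ the pairing direction $l$ is \emph{transverse} to the long direction of the complex: for $R_0$ the complexes have dimensions $(2,2,n_2)$, so each square $\Square_0^{\epsilon}(\Corner(z))$ mixes a line from the first cross-section $\Square_2^0(z)$ with a line from the last one $\Square_2^{n_2-1}(z)$ (this is what Figure \ref{fig:symdelt_6} depicts, and it is how $(R_2)^{\mathrm{TC}}$ is used later in Theorem \ref{thm:maindeltathm}, with complexes from $\mathcal{C}_{n_0,2,2}$; the displayed definition of $R_i$ is written for the cases $i=0,1$ and must be read this way in general). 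With that orientation, chaining component matrices by transitivity of $\Delta_{\theta_i,\theta_j,\theta_l}$ does \emph{not} give $R_l\subseteq\Delta_{\theta_i,\theta_j,\theta_l}$: each component matrix relates its two direction-$l$ faces, but the pair you must reach is assembled transversally from the two ends of the chain. That step is exactly Lemma \ref{lem:symdel2} (resting on Lemma \ref{lem:symdel1} and the Day-term shift Proposition \ref{prop:shift}), applied inductively along the chain, and it is the entire substance of the paper's proof. Your reorientation ``so that $l$ is the long coordinate'' replaces $R_l$ by a different relation whose transitive closure is too small: the first and last cross-sections of such a chain are direction-$l$ faces of $(\theta_i,\theta_j,\theta_l)$-matrices, so the field of your relation lies inside $M(\theta_i,\theta_j)$, and its transitive closure cannot contain the diagonal pairs $\langle s,s\rangle$ with $s\in\Delta_{\theta_i,\theta_j}\setminus M(\theta_i,\theta_j)$. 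Hence under your reading the reverse inclusion fails in general, and in particular reflexivity of $\tc(R_l)$ on $\Delta_{\theta_i,\theta_j}$ is not ``routine'' (under the correct reading it is: double a $2$-dimensional chain witnessing $s\in\Delta_{\theta_i,\theta_j}$ along direction $l$).

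Relatedly, symmetry is not the main obstacle, and the machinery you earmark for it (the shift rotations $R^e_{j,l}$ of Lemmas \ref{lem:comrot} and \ref{lem:rotwhen2}) plays no role in this lemma. Since $M(\theta_i,\theta_j,\theta_l)$ is closed under reflection in the coordinate $l$ (generators map to generators and reflection commutes with the operations), reflecting a complex in direction $l$ reverses its corner pair, so $R_l$ is symmetric outright; reflexivity and compatibility are similarly quick, which is why the paper dismisses them in one line. The correct division of labor is: $(R_l)^{\mathrm{TC}}$ is a congruence of $\Delta_{\theta_i,\theta_j}$ containing the generators $\cube^3_l(x,y)$, giving $\Delta_{\theta_i,\theta_j,\theta_l}\subseteq (R_l)^{\mathrm{TC}}$; and $R_l\subseteq\Delta_{\theta_i,\theta_j,\theta_l}$ by iterating Lemma \ref{lem:symdel2}, using that $M(\theta_i,\theta_j,\theta_l)\subseteq\Delta_{\theta_i,\theta_j,\theta_l}$ so that every component matrix is already a $\Delta$-pair. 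The Day terms enter only through Lemma \ref{lem:symdel2}; they are what keeps this lemma from being a formality, and your outline never deploys them where they are actually required.
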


\begin{proof}
Without loss, we show that $\Delta_{\theta_1, \theta_2, \theta_0}$ is the transitive closure of $R_0$.

Notice that $R_0 \in (\Delta_{\theta_1, \theta_2})^2$. It is easy to see that $R_0$ is reflexive, symmetric and compatible. Therefore, $(R_0)^{TC} \in \Con(\Delta_{\theta_1, \theta_2}).$ Notice that $M(\theta_0, \theta_1, \theta_2) \subseteq \Delta_{\theta_1, \theta_2, \theta_0}$, because each generator of $M(\theta_0, \theta_1, \theta_2)$ belongs to $\Delta_{\theta_1, \theta_2, \theta_0}$. Therefore, $R_0 \subseteq \Delta_{\theta_1, \theta_2, \theta_0}$ by Lemma \ref{lem:symdel2}, so $(R_0)^{TC} = \Delta_{\theta_1, \theta_2, \theta_0}$.

\end{proof}

A similar argument shows that $R_2 \in \Delta_{\theta_1, \theta_2, \theta_0}$. An application of Lemma \ref{lem:symdel2} then gives that $\Delta_{\theta_0, \theta_1, \theta_2} \subseteq \Delta_{\theta_1, \theta_2, \theta_0}$. The argument is clearly symmetric, so we have proved the following 

\begin{thm}\label{thm:symdel1}
$\Delta_{\theta_i, \theta_j, \theta_k} = \Delta_{\theta_{\pi(i)}, \theta_{\pi(j)}, \theta_{\pi(k)}}$ for any permutation of coordinates $\pi$.
\end{thm}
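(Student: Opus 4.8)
The plan is to prove Theorem~\ref{thm:symdel1} by reducing the general case to a single transposition of adjacent coordinates, and then to reduce that to the identity $\Delta_{\theta_0,\theta_2,\theta_1} = \Delta_{\theta_0,\theta_1,\theta_2}$ that was flagged just before Lemma~\ref{lem:symdel1}. Since the symmetric group $S_3$ is generated by the transposition swapping the first two coordinates together with the transposition swapping the last two, and since swapping the first two coordinates is already handled by the symmetry of the two-dimensional $\Delta$ (Theorem~\ref{thm:BinDelSym}) applied inside the definition $\Delta_{\theta_i,\theta_j,\theta_k} = \Cg^{\Delta_{\theta_i,\theta_j}}(\{\cube_k^3(x,y) : \langle x,y\rangle \in \theta_k\})$, the only genuine content is the transposition of the \emph{last two} coordinates, i.e.\ showing $\Delta_{\theta_0,\theta_2,\theta_1} = \Delta_{\theta_0,\theta_1,\theta_2}$ (after relabelling, the statement ``$\Delta_{\theta_1,\theta_2,\theta_0}$ vs.\ $\Delta_{\theta_0,\theta_1,\theta_2}$'' used in the lemmas is of this type once one also invokes the first-two-coordinate symmetry).

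First I would establish one containment, say $\Delta_{\theta_0,\theta_1,\theta_2} \subseteq \Delta_{\theta_1,\theta_2,\theta_0}$, exactly as sketched in the paragraph preceding the theorem. By Lemma~\ref{lem:symdel4}, $\Delta_{\theta_1,\theta_2,\theta_0}$ is the transitive closure of $R_0$, so it suffices to show $\Delta_{\theta_0,\theta_1,\theta_2} \subseteq (R_0)^{TC}$. The relation $(R_0)^{TC}$ is a congruence of $\Delta_{\theta_1,\theta_2}$ (shown in the proof of Lemma~\ref{lem:symdel4}), so to contain $\Delta_{\theta_0,\theta_1,\theta_2} = \Cg^{\Delta_{\theta_2,\theta_0}}(\ldots)$ — wait, one must be careful about which two-dimensional algebra the congruence lives over; here is where the first-two-coordinate symmetry of Theorem~\ref{thm:BinDelSym} is used to identify $\Delta_{\theta_1,\theta_2}$ with $\Delta_{\theta_2,\theta_1}$. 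It then suffices to check that every generator $\cube_2^3(x,y)$ with $\langle x,y\rangle \in \theta_2$ of $\Delta_{\ldots}$ lies in $(R_0)^{TC}$, which is immediate because such a generator is a $(\theta_0,\theta_1,\theta_2)$-matrix, hence a corner of a $\mathcal{C}_{2,2,2}$-complex, hence in $R_0 \subseteq \Delta_{\theta_1,\theta_2,\theta_0}$ by Lemma~\ref{lem:symdel2}; and one must also verify that a spanning set of the ambient congruence $\Delta_{\theta_1,\theta_2}$-pairs is collapsed, which follows since $R_0$ is reflexive on $\Delta_{\theta_1,\theta_2}$ by construction.

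By the symmetry of the roles of the coordinates in the definition (the subscripts $i,j,k$ play formally identical roles), the reverse containment $\Delta_{\theta_1,\theta_2,\theta_0} \subseteq \Delta_{\theta_0,\theta_1,\theta_2}$ follows from the same argument with the coordinates permuted cyclically: apply Lemma~\ref{lem:symdel4} to write $\Delta_{\theta_0,\theta_1,\theta_2}$ as the transitive closure of $R_2$, note that $R_2 \subseteq \Delta_{\theta_1,\theta_2,\theta_0}$ (the ``similar argument'' remark, proved by the analogue of Lemma~\ref{lem:symdel2} with coordinate $2$ in the role of coordinate $0$), and conclude equality. Composing the adjacent-transposition equalities then yields $\Delta_{\theta_i,\theta_j,\theta_k} = \Delta_{\theta_{\pi(i)},\theta_{\pi(j)},\theta_{\pi(k)}}$ for every $\pi \in S_3$.

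The main obstacle is bookkeeping rather than mathematical depth: one must be scrupulous about \emph{which} two-dimensional $\Delta$ a given three-dimensional $\Delta$ is a congruence over, and invoke Theorem~\ref{thm:BinDelSym} at precisely the right moments to swap $\Delta_{\theta_i,\theta_j}$ for $\Delta_{\theta_j,\theta_i}$ so that the ambient algebras match up. The substantive geometric input — that the two matrices $z_0,z_1$ stacked along a third coordinate can be ``rotated'' so as to realize a coordinate swap — is entirely packaged into Lemmas~\ref{lem:symdel1} and~\ref{lem:symdel2}, so the present proof is just a matter of assembling those pieces with Lemma~\ref{lem:symdel4} and the generation-of-$S_3$-by-transpositions observation.
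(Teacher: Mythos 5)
Your overall skeleton (reduce to generators of $S_3$, dispose of the swap of the first two indices by Theorem~\ref{thm:BinDelSym}, and get the remaining containments from Lemmas~\ref{lem:symdel2} and~\ref{lem:symdel4} plus symmetry of the argument) is the paper's approach, but the step you actually write down for the key containment does not work. You propose to prove $\Delta_{\theta_0,\theta_1,\theta_2}\subseteq (R_0)^{TC}=\Delta_{\theta_1,\theta_2,\theta_0}$ by noting that $(R_0)^{TC}$ is a congruence of $\Delta_{\theta_1,\theta_2}$ and checking that it contains the generators $\cube^3_2(x,y)$. That reduction is invalid: $\Delta_{\theta_0,\theta_1,\theta_2}$ is generated as a congruence of the \emph{other} square algebra, $\Delta_{\theta_0,\theta_1}$ (not $\Delta_{\theta_2,\theta_0}$ as you wrote), so it is the closure of those generators under reflexivity over $\Delta_{\theta_0,\theta_1}$ and symmetry/transitivity \emph{in direction $2$}. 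Being a congruence of $\Delta_{\theta_1,\theta_2}$ only gives $(R_0)^{TC}$ reflexivity, symmetry and transitivity as a relation pairing cubes in direction $0$; it confers none of the direction-$2$ closure properties, so containing the generators does not yield containment of $\Delta_{\theta_0,\theta_1,\theta_2}$. Invoking Theorem~\ref{thm:BinDelSym} to identify $\Delta_{\theta_1,\theta_2}$ with $\Delta_{\theta_2,\theta_1}$ does not repair this, because the mismatch is between $\Delta_{\theta_0,\theta_1}$ and $\Delta_{\theta_1,\theta_2}$, which are genuinely different algebras of squares lying in different coordinate planes of the cube.

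The correct route, which is the one the paper takes and which you describe in your third paragraph but attach to the wrong containment, is: write $\Delta_{\theta_0,\theta_1,\theta_2}=(R_2)^{TC}$ by Lemma~\ref{lem:symdel4}, show $R_2\subseteq\Delta_{\theta_1,\theta_2,\theta_0}$ (by the same gluing induction as in the proof of Lemma~\ref{lem:symdel4}, using Lemma~\ref{lem:symdel2}), and then use Lemma~\ref{lem:symdel2} once more to absorb the direction-$2$ transitive closure, since that lemma is exactly the statement that $\Delta_{\theta_1,\theta_2,\theta_0}$ is closed under composing cubes along direction $2$. Note, however, that this yields $\Delta_{\theta_0,\theta_1,\theta_2}\subseteq\Delta_{\theta_1,\theta_2,\theta_0}$ — the \emph{same} direction as before, not the reverse containment you claim it gives. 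The reverse containment does not come from any computation with $R_0$ or $R_2$ as already stated; it comes from re-running the entire chain (Lemmas~\ref{lem:symdel1}, \ref{lem:symdel2}, \ref{lem:symdel4}) with the indices permuted — the paper's ``the argument is clearly symmetric'' — combined with Theorem~\ref{thm:BinDelSym} for the transposition of the first two indices. So the needed repairs are: replace the generator-checking argument by the $(R_2)^{TC}$ argument, and make the appeal to index-symmetry carry the reverse containment explicitly rather than the mislabeled $R_2$ computation.
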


We also have that 

\begin{thm}\label{thm:tridelisbindel}
$\Delta_{\theta_i, \theta_j, \theta_l} = \Delta_{\Delta_{\theta_i, \theta_j}, \Delta{\theta_i, \theta_l}}$
\end{thm}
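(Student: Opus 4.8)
The plan is to prove the two inclusions $\Delta_{\theta_i,\theta_j,\theta_l} \subseteq \Delta_{\Delta_{\theta_i,\theta_j}, \Delta_{\theta_i,\theta_l}}$ and $\Delta_{\Delta_{\theta_i,\theta_j},\Delta_{\theta_i,\theta_l}} \subseteq \Delta_{\theta_i,\theta_j,\theta_l}$ separately, working inside the ambient algebra $A^{2^3}$. Here $\Delta_{\theta_i,\theta_j}$ is a subalgebra of $A^{2^2}$ (namely a congruence of $\theta_i$), and $\Delta_{\Delta_{\theta_i,\theta_j},\Delta_{\theta_i,\theta_l}}$ is the $2$-dimensional $\Delta$ built from those two congruences of $\theta_i$, hence a congruence of $\Delta_{\theta_i,\theta_j}$ sitting naturally inside $(\Delta_{\theta_i,\theta_j})^2 \leq A^{2^3}$. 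Without loss of generality I would fix $i=0$, $j=1$, $l=2$, so the goal becomes $\Delta_{\theta_0,\theta_1,\theta_2} = \Delta_{\Delta_{\theta_0,\theta_1},\Delta_{\theta_0,\theta_2}}$.

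For the first inclusion I would unwind the definition $\Delta_{\theta_0,\theta_1,\theta_2} = \Cg^{\Delta_{\theta_0,\theta_1}}(\{\cube^3_2(x,y) : \langle x,y\rangle \in \theta_2\})$. Since $\Delta_{\Delta_{\theta_0,\theta_1},\Delta_{\theta_0,\theta_2}}$ is a congruence of $\Delta_{\theta_0,\theta_1}$, it suffices to check that it contains each generator $\cube^3_2(x,y)$ with $\langle x,y\rangle \in \theta_2$. Reading such a cube as a pair $\langle a,b\rangle \in (\Delta_{\theta_0,\theta_1})^2$ with $\face_2^0 = a$ a constant square at $x$ and $\face_2^1 = b$ a constant square at $y$, I need $\langle a,b\rangle \in \Delta_{\Delta_{\theta_0,\theta_1},\Delta_{\theta_0,\theta_2}}$; but the pair of constant squares at $x$ and $y$ is precisely $\cube^2_{2}(\langle x,x,x,x\rangle, \langle y,y,y,y\rangle)$ in the coordinatization of $(\Delta_{\theta_0,\theta_1})^2$, and since $\langle x,y\rangle \in \theta_2 \subseteq \Delta_{\theta_0,\theta_2}$ (the constant square at $x$ being $\Delta_{\theta_0,\theta_2}$-related to the constant square at $y$), this is a generator of $\Delta_{\Delta_{\theta_0,\theta_1},\Delta_{\theta_0,\theta_2}}$. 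Care is needed to verify the square at $x$ really lies in $\Delta_{\theta_0,\theta_1}$ and that the pair lies in $\Delta_{\theta_0,\theta_2}$ viewed as a congruence of $\theta_0$ — but these reduce to $\langle x,x\rangle \in \theta_0$ and $\langle x,y\rangle \in \theta_2$, which hold.

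For the reverse inclusion, symmetrically, $\Delta_{\Delta_{\theta_0,\theta_1},\Delta_{\theta_0,\theta_2}} = \Cg^{\Delta_{\theta_0,\theta_1}}(\{\cube^2_2(p,q) : \langle p,q\rangle \in \Delta_{\theta_0,\theta_2}\})$, so it suffices to show that each such generator lies in $\Delta_{\theta_0,\theta_1,\theta_2}$. Here $p,q$ are $\theta_0$-pairs with $\langle p,q\rangle \in \Delta_{\theta_0,\theta_2}$, and $\cube^2_2(p,q)$ is the cube whose $\face_2^0$ is the constant-square built from $p$ and whose $\face_2^1$ is built from $q$. Since $\langle p,q\rangle \in \Delta_{\theta_0,\theta_2} = (M(\theta_0,\theta_2))^{TC}$, there is a Day-term / matrix chain from $p$ to $q$ through $M(\theta_0,\theta_2)$-matrices; I would push this chain through the $\theta_1$-coordinate (using that $\Delta_{\theta_0,\theta_1,\theta_2}$ is transitively closed and is a congruence containing $M(\theta_0,\theta_1,\theta_2) \supseteq$ the relevant $\cube^3$ generators) to conclude $\cube^2_2(p,q) \in \Delta_{\theta_0,\theta_1,\theta_2}$. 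This is exactly the kind of "lift a $2$-dimensional $\Delta$-chain to a $3$-dimensional one" argument that Lemmas~\ref{lem:symdel1} and~\ref{lem:symdel2} were built to supply, and invoking Theorem~\ref{thm:symdel1} lets me choose whichever coordinate permutation makes the chain cheapest to build.

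The main obstacle is bookkeeping the coordinatizations: $\Delta_{\theta_0,\theta_1}$ lives in $A^{2^2}$, its square lives in $A^{2^3}$, and the identification with cubes-with-a-distinguished-$\theta_0$-structure must be kept consistent with the orientation conventions fixed earlier, so that "generator of the outer $\Delta$" genuinely matches "$\cube^3$ generator of $\Delta_{\theta_0,\theta_1,\theta_2}$." I expect the substantive content to be confined to checking that a chain witnessing $\langle p,q\rangle \in \Delta_{\theta_0,\theta_2}$ can be realized by $\cube^3$-type moves inside $\Delta_{\theta_0,\theta_1,\theta_2}$; once the coordinates are set up carefully this should follow from Theorem~\ref{thm:symdel1} together with the observation that $M(\theta_0,\theta_1,\theta_2)$ already contains all the cubes $\cube^3_2(x,y)$ for $\langle x,y\rangle\in\theta_2$ and $\Delta_{\theta_0,\theta_1,\theta_2}$ is closed under transitivity.
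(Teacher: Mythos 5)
Your proposal is correct and follows essentially the same route as the paper: view both relations as congruences of $\Delta_{\theta_0,\theta_1}$ and check that each contains the other's generators, the only substantive step being that a generator $\langle (p,p),(q,q)\rangle$ with $\langle p,q\rangle\in\Delta_{\theta_0,\theta_2}$ lies in $\Delta_{\theta_0,\theta_1,\theta_2}$, which your chain-doubling argument handles. The paper's proof is just a two-line version of this (``each one contains the generators of the other''), so you have merely filled in details it leaves implicit.
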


\begin{proof}
Without loss, we show that $\Delta_{\theta_1, \theta_2, \theta_0} = \Delta_{\Delta_{\theta_1, \theta_2}, \Delta_{\theta_1, \theta_0}}$.

Both of these relations can be considered as congruences on $\Delta_{\theta_1, \theta_2}$. Each one contains the generators of the other, so they are equal.
\end{proof}

We now use the $\Delta$ relation to characterize the ternary commutator.

\begin{thm}\label{thm:maindeltathm}
The following are equivalent:

\begin{enumerate}
\item
$\langle x, y \rangle \in [\theta_0, \theta_1, \theta_2]$
\item
$ \bigg\langle \left[ \begin{array}{cc}
						x & x\\
						x & x \end{array} \right], 
						\left[ \begin{array}{cc}
						x & y\\
						x & x \end{array} \right] \bigg\rangle \in \Delta_{\theta_0, \theta_1, \theta_2}$
\smallskip				
\item
$ \bigg\langle \left[ \begin{array}{cc}
						a_0 & a_1\\
						a_0 & a_1 \end{array} \right], 
						\left[ \begin{array}{cc}
						a_2 & y\\
						a_2 & x \end{array} \right] \bigg\rangle \in \Delta_{\theta_0, \theta_1, \theta_2}$, for some $a_0, a_1, a_2 \in \A$ \item

\smallskip
$ \bigg\langle \left[ \begin{array}{cc}
						b_1& b_1\\
						b_0& b_0 \end{array} \right], 
						\left[ \begin{array}{cc}
						b_2 & b_2\\
						y & x \end{array} \right] \bigg\rangle \in \Delta_{\theta_0, \theta_1, \theta_2}$ for some $b_0, b_1, b_2\in \A$\item
\smallskip
$ \bigg\langle \left[ \begin{array}{cc}
						c_1 & y\\
						c_0 & c_2 \end{array} \right], 
						\left[ \begin{array}{cc}
						c_1 &x \\
						c_0 & c_2 \end{array} \right] \bigg\rangle \in \Delta_{\theta_0, \theta_1, \theta_2}$

\end{enumerate}

\end{thm}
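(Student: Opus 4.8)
The plan is to prove $(1)\Leftrightarrow(2)$ by reducing to the two‑dimensional theory through Theorems~\ref{thm:tridelisbindel} and~\ref{thm:DelBin}, and then to deduce $(2)\Leftrightarrow(3)\Leftrightarrow(4)\Leftrightarrow(5)$ using the invariance of $\Delta_{\theta_0,\theta_1,\theta_2}$ under all permutations and reflections of its three coordinates. First I would record the bookkeeping: the set $\delta_0=\{\langle x,y\rangle:\text{(2) holds}\}$ is a congruence of $\A$ (reflexivity because the constant cube lies in $\Delta_{\theta_0,\theta_1,\theta_2}$; symmetry and transitivity because $\Delta_{\theta_0,\theta_1,\theta_2}$ is a congruence of $\Delta_{\theta_0,\theta_1}$; compatibility because a basic operation applied coordinatewise to cubes of the shape in (2) again has that shape and $\Delta_{\theta_0,\theta_1,\theta_2}$ is $\A$‑invariant), and moreover $\delta_0\subseteq\theta_0\cap\theta_1\cap\theta_2$, read off from the coordinate edges of the generators of $\Delta_{\theta_0,\theta_1,\theta_2}$; so we may assume $\langle x,y\rangle\in\theta_0$, as otherwise all of (1)--(5) fail.

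Next I would regard $\Delta_{\theta_0,\theta_1}$ and $\Delta_{\theta_0,\theta_2}$ as congruences of the algebra $\theta_0\le\A^2$, so that Theorem~\ref{thm:tridelisbindel} gives $\Delta_{\theta_0,\theta_1,\theta_2}=\Delta_{\Delta_{\theta_0,\theta_1},\Delta_{\theta_0,\theta_2}}$, with the $\theta_0$‑axis as coordinate $0$, the $\Delta_{\theta_0,\theta_1}$‑axis as coordinate $1$, and the $\Delta_{\theta_0,\theta_2}$‑axis as coordinate $2$. Unwinding this coordinatization, the cube in~(2) is, up to a coordinate reflection that $\Delta_{\theta_0,\theta_1,\theta_2}$ respects, the $2\times2$ array $\bmat{\langle x,y\rangle&\langle x,x\rangle\\\langle x,x\rangle&\langle x,x\rangle}$ over $\theta_0$, so Theorem~\ref{thm:DelBin}, applied in the algebra $\theta_0$ with the congruences $\Delta_{\theta_0,\theta_1},\Delta_{\theta_0,\theta_2}$, makes (2) equivalent to $\langle\langle x,x\rangle,\langle x,y\rangle\rangle$ lying in the commutator $[\Delta_{\theta_0,\theta_1},\Delta_{\theta_0,\theta_2}]$ computed in $\theta_0$. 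The same unwinding (using reflection‑invariance to relocate the entries, and matching the parameters $a_i,b_i,c_i$ with the ``for some'' clauses of Theorem~\ref{thm:DelBin}) shows that each of (3), (4), (5) is equivalent to one of conditions (2)--(4) of Theorem~\ref{thm:DelBin} in $\theta_0$, hence that (2)--(5) all reduce to the single statement $\langle\langle x,x\rangle,\langle x,y\rangle\rangle\in[\Delta_{\theta_0,\theta_1},\Delta_{\theta_0,\theta_2}]$.

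It then remains to prove the correspondence $\langle x,y\rangle\in[\theta_0,\theta_1,\theta_2]$ (in $\A$) iff $\langle\langle x,x\rangle,\langle x,y\rangle\rangle\in[\Delta_{\theta_0,\theta_1},\Delta_{\theta_0,\theta_2}]$ (in $\theta_0$), and this is the crux. The engine is an identification of the algebra $M(\Delta_{\theta_0,\theta_1},\Delta_{\theta_0,\theta_2})$ of matrices over $\theta_0$ with the algebra $\Corners(\theta_0,\theta_1,\theta_2)$ over $\A$: a generator $\cube^2_0(P,Q)$ with $\langle P,Q\rangle\in\Delta_{\theta_0,\theta_1}$ is, as a cube over $\A$, the corner of a $(\theta_0,\theta_1)$‑complex extended constantly along the $\theta_2$‑axis, hence a $(\theta_0,\theta_1,\theta_2)$‑corner, and symmetrically for $\cube^2_1(P,Q)$ with $\langle P,Q\rangle\in\Delta_{\theta_0,\theta_2}$; since both algebras are generated by these cubes and $\Corners(\theta_0,\theta_1,\theta_2)$ is closed under the operations (as $\Corner$ commutes with them), they coincide. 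Then, for $\delta\in\Con(\A)$ with $\delta\le\theta_0\cap\theta_1\cap\theta_2$ and the induced congruence $\bar\delta=(\delta\times\delta)\cap\theta_0^2$ of $\theta_0$, the term condition $C(\Delta_{\theta_0,\theta_1},\Delta_{\theta_0,\theta_2};\bar\delta)$ in $\theta_0$ becomes precisely the transitive term condition $C_{tr}(\theta_0,\theta_1,\theta_2;j;\delta)$, which by Theorem~\ref{thm:transimregcen} is equivalent to $C(\theta_0,\theta_1,\theta_2;\delta)$; taking meets over the relevant $\delta$ yields the equality of the two commutators and finishes the proof. I expect the main obstacle to be exactly this identification: one must pin down the correspondence between congruences of $\A$ below $\theta_0\cap\theta_1\cap\theta_2$ and congruences of $\theta_0$, verify that supporting lines, pivot lines and the direction $j$ really do match under the change of coordinates, and confirm that this restricted family of congruences of $\theta_0$ still computes $[\Delta_{\theta_0,\theta_1},\Delta_{\theta_0,\theta_2}]$ and carries the existential quantifiers of (3)--(5) correctly. (A more self‑contained alternative, should the identification prove awkward, is to run the two inclusions $[\theta_0,\theta_1,\theta_2]\le\delta_0$ and $\delta_0\le[\theta_0,\theta_1,\theta_2]$ directly via $C_{tr}$ and the description $\Delta_{\theta_0,\theta_1,\theta_2}=(R_2)^{\mathrm{TC}}$ of Lemma~\ref{lem:symdel4}, the delicate point there being the normalization of the connecting chain so that the supporting lines of each link are $\delta$‑pairs, which is where the shift‑rotation machinery of Section~\ref{DayTerms}--Lemma~\ref{lem:suppreserved} would be invoked, as in the proof of Theorem~\ref{thm:transimregcen}.)
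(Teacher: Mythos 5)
Your overall skeleton has a sound outer layer (the set of pairs satisfying (2) is a congruence below $\theta_0\meet\theta_1\meet\theta_2$; Theorem \ref{thm:tridelisbindel} plus Theorem \ref{thm:DelBin} applied in the algebra $\theta_0$ converts each of (2)--(5) into a statement about $[\Delta_{\theta_0,\theta_1},\Delta_{\theta_0,\theta_2}]$), but the step you yourself flag as the crux is where the argument genuinely breaks. First, the identification $M(\Delta_{\theta_0,\theta_1},\Delta_{\theta_0,\theta_2})=\Corners(\theta_0,\theta_1,\theta_2)$ is not established: your generation argument only gives the inclusion $M(\Delta_{\theta_0,\theta_1},\Delta_{\theta_0,\theta_2})\subseteq\Corners(\theta_0,\theta_1,\theta_2)$ (generators are corners of doubled chains, and corners are closed under operations after padding), while the reverse inclusion is precisely the kind of statement that fails: a corner of a complex that is long in two or three directions encodes simultaneous transitive closures, whereas an element of $M(\alpha,\beta)$ is a single term image of generators -- exactly as $\Delta_{\theta_0,\theta_1}$ is strictly larger than $M(\theta_0,\theta_1)$ in general. (If the equality were true, the paper's whole apparatus of complexes and the transitive term condition would be unnecessary.) Second, even granting the identification, the translation ``$C(\Delta_{\theta_0,\theta_1},\Delta_{\theta_0,\theta_2};\bar\delta)$ becomes precisely $C_{tr}(\theta_0,\theta_1,\theta_2;j;\delta)$'' is false as stated: under the change of coordinates the binary condition reads ``if the two $(2)$-lines of the cube at the $u=0$ positions are $\delta$-pairs then both $(2)$-lines at $u=1$ are,'' i.e.\ a two-line hypothesis with a two-line conclusion, whereas $C_{tr}$ has a three-line hypothesis and a one-line conclusion. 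Neither implication between the two is formal, and the direction you actually need for ``(2)$\Rightarrow$(1)'' -- deducing $C(\Delta_{\theta_0,\theta_1},\Delta_{\theta_0,\theta_2};\bar\delta)$ with $\delta=[\theta_0,\theta_1,\theta_2]$ from the term condition over $\A$ -- is essentially equivalent to the inequality $[\Delta_{\theta_0,\theta_1},\Delta_{\theta_0,\theta_2}]\leq\overline{[\theta_0,\theta_1,\theta_2]}$, i.e.\ to the theorem itself, so the reduction risks circularity unless that bridge is proved by independent means (Day-term shifting), which is the real work. Third, conditions (3)--(5) do not reduce to the single pair $\langle\langle x,x\rangle,\langle x,y\rangle\rangle$ by reflections alone: e.g.\ (5) unwinds to $\langle\langle c_1,y\rangle,\langle c_1,x\rangle\rangle\in[\Delta_{\theta_0,\theta_1},\Delta_{\theta_0,\theta_2}]$ for some $c_1$, and passing from that to $\langle x,y\rangle\in[\theta_0,\theta_1,\theta_2]$ needs a further shifting argument, not just matching the existential clauses of Theorem \ref{thm:DelBin}.

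Your parenthetical fallback is in fact the paper's proof: for (2)$\Rightarrow$(1) (and likewise (3),(4),(5)$\Rightarrow$(1)) it uses $\Delta_{\theta_0,\theta_1,\theta_2}=(R_2)^{TC}$ from Lemma \ref{lem:symdel4} to get a chain of complexes in $\mathcal{C}_{n_0,2,2}(\theta_0,\theta_1,\theta_2)$, applies the shift rotations $R^e_{0,1}$ of Lemma \ref{lem:comrot} to each link so that the relevant supporting lines become constant or $\delta$-pairs, invokes $C_{tr}$ (Theorem \ref{thm:transimregcen}) with $\delta=[\theta_0,\theta_1,\theta_2]$, and finishes with Proposition \ref{prop:shift}; for (1)$\Rightarrow$(2) it shows directly that the set of pairs in (2) is a congruence of $\A$. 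So the delicate normalization you defer to in the parenthesis is not an optional alternative but the essential content; as written, the main route has the gaps above.
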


\begin{proof}
That $(2)$ implies $(3),(4),(5)$ is trivial. Suppose that $(2)$ is true for $x,y \in \A$. We show that $(1)$ holds. Because $\Delta_{\theta_0, \theta_1, \theta_2} = (R_2)^{TC}$, there is a sequence of complexes $z_0, \dots, z_{s-1} \in \mathcal{C}_{n_0 \geq 2, 2,2}(\theta_0, \theta_1, \theta_2)$ such that
\begin{enumerate}
\item
$\Square_2^0({z_0}) = \left[ \begin{array}{cc}
						x & x\\
						x & x \end{array} \right]$
\item
$\Square_2^0({z_{s-1}}) = \left[ \begin{array}{cc}
						x & y\\
						x & x \end{array} \right]$
						
\item
$\Square_2^1(z_k) = \Square_2^0(z_{k+1}) = \left[ \begin{array}{cc}
						c_k & d_k\\
						a_k & b_k \end{array} \right]$ for each $k \in s-1$.

\end{enumerate}
This situation is depicted in Figure \ref{fig:delandcomm_1}, where $[\theta_0, \theta_1, \theta_2]$-pairs are connected by a solid curved line. 

\begin{figure}[!ht]
\includegraphics{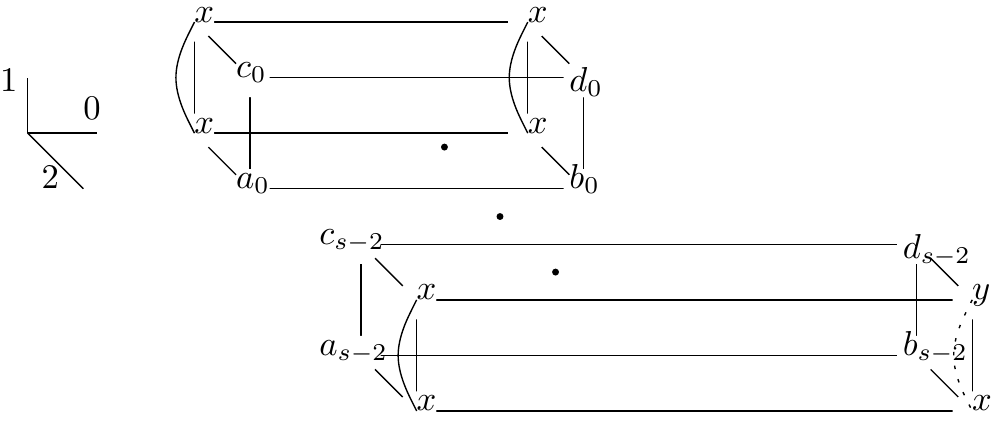}
\caption{}\label{fig:delandcomm_1}
\end{figure}
We wish to show that the pair connected by a dashed curved line is also a $[\theta_0, \theta_1, \theta_2]$-pair. By Lemma \ref{prop:shift}, it suffices to see that $\langle m_e(x,x,x,x), m_e(x,y,x,x) \rangle \in [\theta_0, \theta_1, \theta_2]$ for each $e \in n+1$.
Now, for $e \in n+1$ we have for each $z_k$ the $e$th shift rotation at $(0,1)$. By Lemma \ref{lem:comrot}, these complexes have the following faces:

\begin{enumerate}
\item
$\Square_2^0(R^e_{0,1}({z_0})) = \left[ \begin{array}{cc}
						x & x\\
						x & x \end{array} \right]$
\item
$\Square_2^0(R^e_{0,1}({z_{s-1}})) = \left[ \begin{array}{cc}
						m_e(x,x,x,x) & m_e(x,y,x,x) \\
						x & x \end{array} \right]$
						
\item
$\Square_2^1(R^e_{0,1}(z_k)) = \Square_2^0(z_{k+1}) = \left[ \begin{array}{cc}
						m_e(c_k, c_k, a_k, a_k) & m_e(c_k, d_k, b_k, a_k) \\
						c_k & c_k \end{array} \right]$ for each $k \in s-1$.

\end{enumerate}

Now, $C(\theta_0, \theta_1, \theta_2; [\theta_0, \theta_1, \theta_2])$ holds, which implies that $C_{tr}(\theta_0, \theta_1, \theta_2; [\theta_0, \theta_1, \theta_2])$ holds. So, $\langle m_e(x,x,x,x), m_e(x,y,x,x) \rangle \in [\theta_0, \theta_1, \theta_2]$ by induction. This is shown in Figure, where constant pairs are drawn in bold.

\begin{figure}[!ht]
\includegraphics{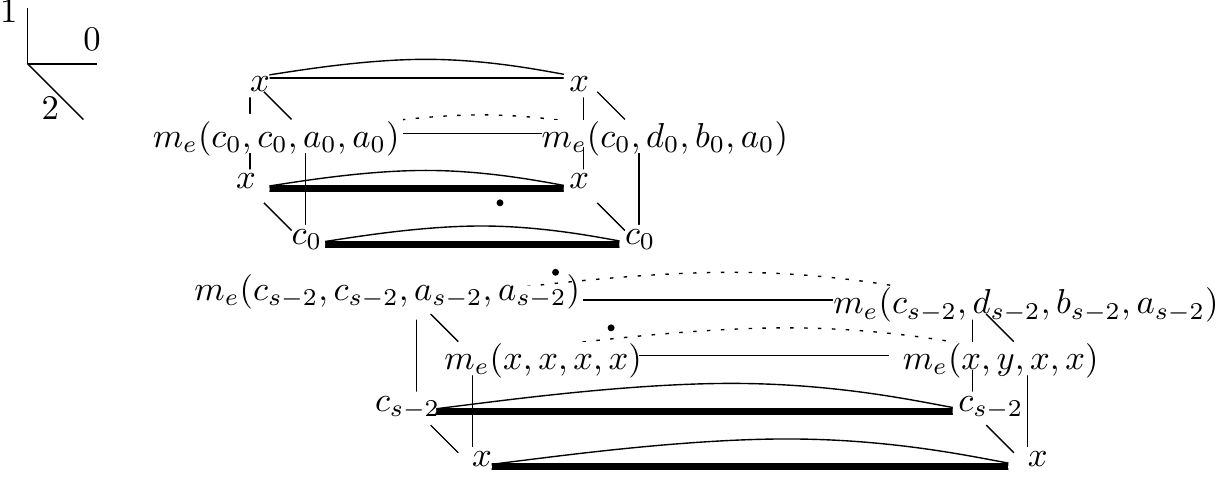}
\caption{}\label{fig:delandcomm_2}
\end{figure}

A similar argument shows that $(3),(4), (5)$ all imply $(1)$. To finish we show that $(1)$ implies $(2)$. For this it suffices to show that the set of pairs $\langle x, y \rangle \in \A^2$ such that 
$$ \bigg\langle \left[ \begin{array}{cc}
						x & x\\
						x & x \end{array} \right], 
						\left[ \begin{array}{cc}
						x & y\\
						x & x \end{array} \right] \bigg\rangle \in \Delta_{\theta_0, \theta_1, \theta_2}$$
is a congruence of $\A$. Reflexivity and compatibility are obvious. The proof of symmetry is shown in Figure \ref{fig:delandcomm_3}. The proof of transitivity is similar to that of symmetry.

\begin{figure}[!ht]
\includegraphics[scale=.9]{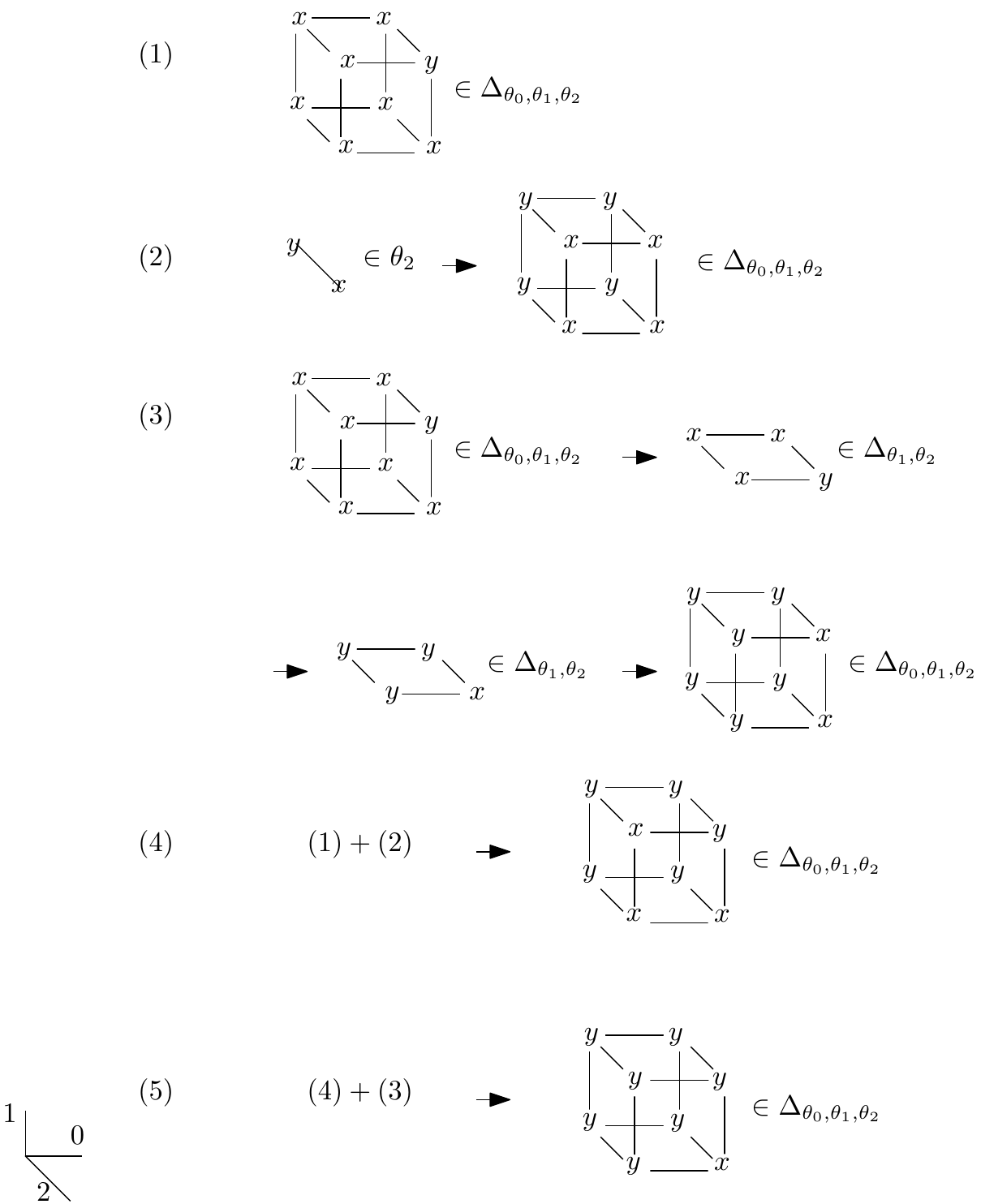}
\caption{}\label{fig:delandcomm_3}
\end{figure}
\end{proof}

\newpage

\section{Greatest Ternary Congruence Lattice Operation}

A beautiful result in the theory of the binary commutator is the following from \cite{FM}.

\begin{thm}

Let $\var$ be a modular variety, and suppose that for each $\A \in \var$ there is a binary operation $C: \Con(\A)^2 \rightarrow \Con(\A)$ such that, for all $\A \in \var$ and $\theta_0, \theta_1 \in \Con(\A)$,

\begin{enumerate}
\item $C(\theta_0, \theta_1)  \leq \theta_0 \meet \theta_1$ 
\item If $f$ is a surjective homomorphism with kernel $\pi$, then $$C(\theta_0, \theta_1) \join \pi = f^{-1}C(f(\theta_0 \join \pi, f(\theta_0 \join \pi)).$$ 

\end{enumerate}
Then $C(\theta_0, \theta_1) \leq [\theta_0, \theta_1]$ for all $A \in \var$ and $\theta_0, \theta_1 \in \Con(\A)$.

\end{thm}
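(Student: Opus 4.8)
The plan is to mimic the classical argument from \cite{FM} for the binary case, using the characterization of the ternary commutator via the relation $\Delta_{\theta_0,\theta_1,\theta_2}$ established in Theorem~\ref{thm:maindeltathm}. Fix $\A \in \var$ and $\theta_0, \theta_1 \in \Con(\A)$; I want to show $C(\theta_0,\theta_1) \leq [\theta_0,\theta_1]$. Wait --- the statement as printed concerns a \emph{binary} operation $C$ and the binary commutator, so despite the section title the claim to be proved is exactly the classical Freese--McKenzie result, and the natural route is the standard one: take $\langle x, y\rangle \in C(\theta_0,\theta_1)$ and produce a $\Delta$-configuration witnessing $\langle x,y\rangle \in [\theta_0,\theta_1]$ via Theorem~\ref{thm:DelBin}. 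First I would recall that $[\theta_0,\theta_1]$ is itself a congruence satisfying hypotheses (1) and (2): property (1) is immediate, and property (2) --- that the commutator is preserved under quotients in the stated sense --- is one of the basic facts of modular commutator theory recorded in \cite{FM}, \cite{Gumm}. So the real content is that $[\theta_0,\theta_1]$ is the \emph{largest} such operation.

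The key steps, in order, would be: (i) Let $\Theta = \Delta_{\theta_0,\theta_1}$, viewed as a congruence on the algebra $\theta_0 \leq \A^2$, and consider the quotient map $f: \theta_0 \to \theta_0/\Theta$. I would analyze how $C$ interacts with the congruences $\eta_0, \eta_1$ on $\theta_0$ (the kernels of the two coordinate projections) together with $\Theta$. (ii) Using hypothesis (2) applied to suitable surjective homomorphisms --- first the projection $\theta_0 \to \A$ onto a coordinate, then the quotient by $\Theta$ --- express $C(\eta_0 \join \Theta, \eta_1 \join \Theta)$ inside $\theta_0/\Theta$ in terms of $C$ computed on $\A$, i.e.\ relate it back to $C(\theta_0 \join \cdots, \theta_1 \join \cdots)$. (iii) Exploit hypothesis (1): since $C(\eta_0,\eta_1) \leq \eta_0 \meet \eta_1 = 0_{\theta_0}$ when the $\eta_i$ are the projection kernels (their meet is trivial), a transfer argument through $\Theta$ forces the relevant instance of $C$ to collapse into $\Theta$. (iv) Unwind this to conclude: if $\langle x,y\rangle \in C(\theta_0,\theta_1)$, then the matrix $\bmat{x & y \\ y & y}$ lies in $\Delta_{\theta_0,\theta_1} = \Theta$, which by Theorem~\ref{thm:DelBin} gives $\langle x,y\rangle \in [\theta_0,\theta_1]$. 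Throughout, Theorem~\ref{thm:BinDelSym} (symmetry of $\Delta$) and Proposition~\ref{prop:shift} (the Day-term shift characterization of $\delta$-membership) are the workhorses that let one move between the two coordinates and verify that the collapsing actually lands inside $\Theta$.

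The main obstacle I anticipate is step (ii)--(iii): correctly setting up which surjective homomorphism to feed into hypothesis (2) so that the "preimage of $C$ of the pushed-forward congruences" equals $C$ joined with the kernel, and then recognizing that the pushed-forward congruences on the quotient $\theta_0/\Theta$ are small enough (by hypothesis (1) applied on $\theta_0$, where the coordinate-projection kernels meet trivially) to force $C(\theta_0,\theta_1) \subseteq \Theta$. This is precisely the delicate bookkeeping with the $\Delta$-congruence that makes the Freese--McKenzie proof nontrivial; the modular identities enter to guarantee that $\Theta$ restricted to the diagonal and to constant lines behaves as needed, and one must be careful that $\eta_0 \join \Theta$ and $\eta_1 \join \Theta$ are the correct congruences to which hypothesis (2) is applied. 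Everything else --- reflexivity/compatibility checks, and the final invocation of Theorem~\ref{thm:DelBin} --- is routine. I would present the argument by first proving the "join with kernel" identity for $[\,\cdot\,,\cdot\,]$ as a lemma (citing \cite{FM}), then running the transfer argument for a general $C$ satisfying (1) and (2).
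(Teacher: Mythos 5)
The paper itself states this binary theorem without proof (it is quoted from \cite{FM}); the intended argument is exactly the one the paper transcribes to arity $3$ in Proposition \ref{global}, and measuring your outline against that template reveals a genuine gap. Your cast of characters is right (work inside $\theta_0 \leq \A^2$ with the projection kernels $\eta_0, \eta_1$ and $\Theta = \Delta_{\theta_0,\theta_1}$, use hypothesis (2) along surjections, finish with Theorem \ref{thm:DelBin}), but the instance of hypothesis (1) you name in step (iii) is the wrong one, and the ``transfer through $\Theta$'' is not an argument. Applying (2) to the quotient map $\theta_0 \to \theta_0/\Theta$ with $C(\eta_0,\eta_1) \leq \eta_0 \meet \eta_1 = 0_{\theta_0}$ only says that $C$ of the images of $\eta_0 \join \Theta$ and $\eta_1 \join \Theta$ is the zero congruence of $\theta_0/\Theta$; since $\theta_0/\Theta$ admits no surjection onto $\A$ covered by (2) under which these congruences correspond to $\theta_0, \theta_1$, and since monotonicity of $C$ is \emph{not} among the hypotheses, this cannot be carried back to $C(\theta_0,\theta_1)$. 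Applying (2) instead to a coordinate projection with arguments $\eta_0 \join \Theta, \eta_1 \join \Theta$ produces $C(\theta_0 \join \theta_1, \theta_1)$, which again is not comparable to $C(\theta_0,\theta_1)$ without monotonicity. So steps (ii)--(iii) do not compose into a proof, and step (iv) (producing the matrix $\bmat{x & y \\ y & y} \in \Theta$ for each $\langle x,y\rangle \in C(\theta_0,\theta_1)$) is asserted rather than derived.

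The missing idea, precisely as in the proof of Proposition \ref{global}, is to feed $\Delta_{\theta_0,\theta_1}$ \emph{itself} into $C$ as an argument on the subalgebra. Let $f : \theta_0 \to \A$ be the second-coordinate projection with kernel $\pi = \eta_1$. Then $f(\eta_0 \join \pi) = \theta_0$ and $f(\Delta_{\theta_0,\theta_1} \join \pi) = \theta_1$, so hypothesis (2) gives $C(\theta_0,\theta_1) = f\big(C(\eta_0, \Delta_{\theta_0,\theta_1}) \join \pi\big)$. Hypothesis (1) gives $C(\eta_0,\Delta_{\theta_0,\theta_1}) \leq \eta_0 \meet \Delta_{\theta_0,\theta_1}$, and items (3)/(4) of Theorem \ref{thm:DelBin} (up to the evident symmetry of rows and columns, via Theorem \ref{thm:BinDelSym}) say exactly that a $\Delta_{\theta_0,\theta_1}$-pair with one constant column has its other column in $[\theta_0,\theta_1]$; hence $\eta_0 \meet \Delta_{\theta_0,\theta_1} \leq f^{-1}\big([\theta_0,\theta_1]\big)$, and since $\pi \leq f^{-1}([\theta_0,\theta_1])$ as well, applying $f$ yields $C(\theta_0,\theta_1) \leq [\theta_0,\theta_1]$. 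This is the step where hypothesis (1) does its work --- it places $C$ inside $\Delta$, where the $\Delta$-characterization of the commutator becomes available --- and it is the step absent from your outline; by contrast, the Day-term machinery (Proposition \ref{prop:shift}) is not needed beyond what Theorem \ref{thm:DelBin} already packages.
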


Interestingly, an analogous property holds for the ternary commutator. 

\begin{prop}\label{global}

Let $\var$ be a congruence modular variety. Suppose that $C:\Con(\A)^3 \rightarrow \Con(\A)$ is defined for each $\A \in \var$ such that 
\begin{enumerate}
\item $C(\theta_0, \theta_1, \theta_2) \leq \theta_0 \meet \theta_1 \meet \theta_2$
\item $C (\theta_0, \theta_1, \theta_2) \join \pi = f^{-1}(C(f(\theta_0 \join \pi),f(\theta_1 \join \pi),f(\theta_2 \join \pi)))$ for $f$ a surjective homomorphism with kernel $\pi$. 

\end{enumerate}

Then $C(\theta_0, \theta_1, \theta_2) \leq [\theta_0, \theta_1, \theta_2]$.
\end{prop}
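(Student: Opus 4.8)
The plan is to mimic the classical argument from \cite{FM} for the binary case, using the $\Delta$-characterization of the ternary commutator from Theorem \ref{thm:maindeltathm}. The key observation is that $[\theta_0,\theta_1,\theta_2]$ itself satisfies properties (1) and (2): property (1) is $C(\theta_0,\theta_1,\theta_2;\delta)$ with $\delta = [\theta_0,\theta_1,\theta_2]$ forcing $[\theta_0,\theta_1,\theta_2]\le\theta_0\meet\theta_1\meet\theta_2$, which follows since each $\cube^3_i(a,b)$ with $\langle a,b\rangle\in\theta_i$ witnesses that the pivot line is a $\theta_i$-pair whenever the supporting lines are; and property (2) is the statement that the ternary commutator commutes with quotients, which is a standard consequence of the term-condition definition (every $(f(\theta_0\join\pi),\dots)$-matrix lifts to a $(\theta_0,\dots)$-matrix and vice versa modulo $\pi$). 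So the strategy reduces to showing that \emph{any} operation $C$ satisfying (1) and (2) is bounded above by one particular such operation, namely the ternary commutator; equivalently, if $C$ satisfies (1) and (2), then $C(\theta_0,\theta_1,\theta_2)\le[\theta_0,\theta_1,\theta_2]$ for all $\A,\theta_0,\theta_1,\theta_2$.

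First I would fix $\A\in\var$, congruences $\theta_0,\theta_1,\theta_2$, and a pair $\langle x,y\rangle\in C(\theta_0,\theta_1,\theta_2)$; the goal is $\langle x,y\rangle\in[\theta_0,\theta_1,\theta_2]$, for which by Theorem \ref{thm:maindeltathm} it suffices to show
\[
\bigg\langle \begin{bmatrix} x & x\\ x & x\end{bmatrix},\ \begin{bmatrix} x & y\\ x & x\end{bmatrix}\bigg\rangle \in \Delta_{\theta_0,\theta_1,\theta_2}.
\]
The natural route, following the binary template, is to pass to the algebra $\Delta_{\theta_1,\theta_2}$ (a subalgebra of $\A^{2^2}$) and observe via Theorem \ref{thm:tridelisbindel} that $\Delta_{\theta_0,\theta_1,\theta_2}=\Delta_{\Delta_{\theta_1,\theta_2},\Delta_{\theta_1,\theta_0}}$ is a binary $\Delta$ on that larger algebra. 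One then wants to apply the binary result (the displayed theorem immediately preceding the proposition) to $\Delta_{\theta_1,\theta_2}$ with the induced congruences, so that the binary commutator $[\Delta_{\theta_1,\theta_2},\Delta_{\theta_1,\theta_0}]$ computed there serves as the upper bound, and then translate back through Theorem \ref{thm:maindeltathm} (together with the $2$-dimensional version, Theorem \ref{thm:DelBin}) to land the pair $\langle x,y\rangle$ in $[\theta_0,\theta_1,\theta_2]$. The bridge is to check that the hypothesized ternary operation $C$ on $\var$ induces, on the subalgebras $\Delta_{\theta_1,\theta_2}\le\A^2$ (which again generate a modular variety), a binary operation satisfying conditions (1) and (2) of the binary theorem; conditions (1) and (2) for $C$ are designed precisely so that this transfer works, using that surjective homomorphisms of $\A$ induce surjective homomorphisms of the $\Delta$-algebras with kernel the corresponding $\Delta$ of the kernel.

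The main obstacle I anticipate is the bookkeeping in this reduction step: one must show carefully that the correspondence $\theta\mapsto\Delta_{\theta_1,\theta_2}$-type congruences interacts correctly with both the meet in condition (1) and the homomorphism-image formula in condition (2), so that $C$'s two properties on $\A$ genuinely yield the two properties of a binary congruence-lattice operation on $\Delta_{\theta_1,\theta_2}$ as an algebra in (the variety generated by) $\var$; in particular one needs $\Delta_{\Delta_{\theta_1,\theta_2},\Delta_{\theta_1,\theta_0}}\le\Delta_{\theta_1,\theta_2}\meet\Delta_{\theta_1,\theta_0}$ to match up with $C(\theta_0,\theta_1,\theta_2)\le\theta_0\meet\theta_1\meet\theta_2$ after the translation, and the kernel-pushforward computation for $\Delta$-algebras. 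An alternative, more self-contained approach avoiding the binary black box would be to argue directly: given the sequence of complexes witnessing membership in $\Delta_{\theta_0,\theta_1,\theta_2}=(R_2)^{TC}$, use condition (2) to reduce modulo $C(\theta_0,\theta_1,\theta_2)$ (replacing $\A$ by $\A/C(\theta_0,\theta_1,\theta_2)$, where the images of $\theta_0,\theta_1,\theta_2$ have trivial commutator by condition (1) and the definition), and then show $x\equiv y$ there; this parallels the Day-terms manipulations already used in the proofs of Lemmas \ref{lem:symdel1} and \ref{lem:symdel2} and in Theorem \ref{thm:maindeltathm}, and I would fall back on it if the functoriality of $\Delta$ proves too delicate to state cleanly.
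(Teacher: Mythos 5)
There is a genuine gap here. Your main route --- inducing from the ternary $C$ a binary operation on congruence lattices and invoking the displayed binary theorem for the algebra $\Delta_{\theta_1,\theta_2}$ --- is never made concrete, and I do not see how to make it work: the binary theorem requires an operation $C'$ defined on \emph{every} algebra of $\var$ and satisfying its hypotheses globally, and no such $C'$ is constructed from $C$ (note that monotonicity of $C$ is not among the hypotheses, so you cannot, for instance, freeze one argument at the total congruence and compare); moreover, even granting such a $C'$, the bound it would produce is the binary commutator $[\Delta_{\theta_1,\theta_2},\Delta_{\theta_1,\theta_0}]$ computed in the congruence lattice of the algebra $\theta_1$, and the paper establishes no bridge from that object back to $[\theta_0,\theta_1,\theta_2]$ (Theorem \ref{thm:tridelisbindel} relates the $\Delta$'s, not their commutators). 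Your fallback is also flawed: condition (1) says only $C(\theta_0,\theta_1,\theta_2)\le\theta_0\meet\theta_1\meet\theta_2$ and gives no triviality of any commutator in $\A/C(\theta_0,\theta_1,\theta_2)$; and showing $x\equiv y$ modulo $C(\theta_0,\theta_1,\theta_2)$ is vacuous, since $\langle x,y\rangle\in C(\theta_0,\theta_1,\theta_2)$ by assumption --- what is needed is $x\equiv y$ modulo $[\theta_0,\theta_1,\theta_2]$, and reducing modulo the commutator instead is circular, because condition (2) would then require knowing that $C$ of the images vanishes, which is essentially the statement being proved.

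The missing idea, which is how the paper argues, is to apply the hypothesized ternary $C$ itself (not an induced binary operation) to three congruences of the algebra $\Delta_{\theta_0,\theta_1}\in\var$: the congruence $\Delta=\Delta_{\theta_0,\theta_1,\theta_2}$, and the two congruences $\eta_0,\eta_1$ that move the top-right entry of a matrix by $\theta_0$, respectively $\theta_1$, while fixing the appropriate remaining entries; together with the congruence $[\theta_0,\theta_1,\theta_2]_0$ relating pairs of matrices whose top-right entries are $[\theta_0,\theta_1,\theta_2]$-related. Hypothesis (1) gives $C(\eta_0,\eta_1,\Delta)\le\eta_0\meet\eta_1\meet\Delta$, and Theorem \ref{thm:maindeltathm} (item (5)) gives $\eta_0\meet\eta_1\meet\Delta\le[\theta_0,\theta_1,\theta_2]_0$. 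Then, taking $f:\Delta_{\theta_0,\theta_1}\to\A$ to be the projection onto the top-right vertex with kernel $\pi$, one checks $f(\eta_0\join\pi)=\theta_0$, $f(\eta_1\join\pi)=\theta_1$, $f(\Delta\join\pi)=\theta_2$ and $f([\theta_0,\theta_1,\theta_2]_0)=[\theta_0,\theta_1,\theta_2]$, so hypothesis (2) yields $C(\theta_0,\theta_1,\theta_2)=f(C(\eta_0,\eta_1,\Delta)\join\pi)\le[\theta_0,\theta_1,\theta_2]$. This transfer through $\Delta_{\theta_0,\theta_1}$ is where both hypotheses are actually used, and it is the step your proposal does not supply.
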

\begin{proof}

Take $\theta_0, \theta_1, \theta_2 \in \Con(\A)$ for some $\A \in \var$. 
Consider the following congruences of $\Delta_{\theta_0, \theta_1}$:
\begin{align*}
\Delta &= \Delta_{\theta_0, \theta_1, \theta_2} \\
[\theta_0, \theta_1, \theta_2]_0 &= \bigg\{ \bigg\langle\left[\begin{array}{cc}
													u_0& x\\
													v_0 & w_0 \end{array}\right], 
													\left[\begin{array}{cc}
													u_1& y\\
													v_1 & w_1 \end{array}\right]\bigg\rangle :
\langle x,y \rangle \in [\theta_0, \theta_1, \theta_2] \bigg\}\\
\eta_0 &= \bigg\{ \bigg\langle\left[\begin{array}{cc}
													u_0& x\\
													v_0 & w_0 \end{array}\right], 
													\left[\begin{array}{cc}
													u_0& y\\
													v_0 & w_1 \end{array}\right]\bigg\rangle :
\langle x,y \rangle \in \theta_0 \bigg\}\\
\eta_0 &= \bigg\{ \bigg\langle\left[\begin{array}{cc}
													u_0& x\\
													v_0 & w_0 \end{array}\right], 
													\left[\begin{array}{cc}
													u_1& y\\
													v_0 & w_0 \end{array}\right]\bigg\rangle :
\langle x,y \rangle \in \theta_1 \bigg\}\\
\end{align*}

Let $f:\Delta_{\theta_0, \theta_1} \rightarrow \A$ be the projection of $\Delta_{\theta_0, \theta_1}$ onto the top right vertex and let $\pi$ be the kernel of $f$. One can check that $f([\theta_0, \theta_1, \theta_2]_0) = [\theta_0, \theta_1, \theta_2]$, $f(\Delta \join \pi) = \theta_2$, $f(\eta_0 \join \pi) = \theta_0$ and $f(\eta_1 \join \pi) = \theta_1$.

We compute $$C(\eta_0, \eta_1, \Delta) \leq \eta_0 \meet \eta_1 \meet \Delta \leq [\theta_0, \theta_1, \theta_2]_0$$
where the first inequality follows from the definition of $C$, and the second from Theorem \ref{thm:maindeltathm}. We then have that 

\begin{align*}
C(\theta_0, \theta_1, \theta_2) &= C(f(\eta_0 \join \pi),f(\eta_1 \join \pi), f(\Delta \join \pi))\\
&= f(C(\eta_0, \eta_1, \Delta)  \join \pi)\\
&\leq [\theta_0, \theta_1, \theta_2]
\end{align*}

\end{proof}

An immediate consequence of Proposition \ref{global} is the following

\begin{thm}
For $\theta_0, \theta_1, \theta_2 \in \Con(\A)$, we have that

 $$[[\theta_0, \theta_1], \theta_2] \leq [\theta_0, \theta_1,\theta_2].$$

\end{thm}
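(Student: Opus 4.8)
The plan is to derive the inequality as a direct consequence of Proposition \ref{global} by checking that the binary-then-ternary iterated commutator operation $C(\theta_0,\theta_1,\theta_2) := [[\theta_0,\theta_1],\theta_2]$ satisfies the two hypotheses of that proposition. Since Proposition \ref{global} says that any such $C$ lies below $[\theta_0,\theta_1,\theta_2]$, verifying the hypotheses finishes the argument.

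First I would check condition (1): that $[[\theta_0,\theta_1],\theta_2] \leq \theta_0 \meet \theta_1 \meet \theta_2$. This follows from the corresponding property of the binary commutator, namely $[\alpha,\beta] \leq \alpha \meet \beta$ for all congruences $\alpha,\beta$: applying it once gives $[\theta_0,\theta_1] \leq \theta_0 \meet \theta_1$, and applying it again gives $[[\theta_0,\theta_1],\theta_2] \leq [\theta_0,\theta_1] \meet \theta_2 \leq \theta_0 \meet \theta_1 \meet \theta_2$. This step is routine and relies only on the classical theory of the binary modular commutator.

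Next I would check condition (2): the homomorphism property. Let $f$ be a surjective homomorphism with kernel $\pi$. The classical binary commutator satisfies $[\alpha,\beta] \join \pi = f^{-1}([f(\alpha \join \pi), f(\beta \join \pi)])$ (this is exactly hypothesis (2) specialized to the actual commutator, which it satisfies). Applying this to the inner commutator gives $[\theta_0,\theta_1] \join \pi = f^{-1}([f(\theta_0 \join \pi), f(\theta_1 \join \pi)])$; write $\sigma = f(\theta_0 \join \pi)$ and $\tau = f(\theta_1 \join \pi)$, so that $f([\theta_0,\theta_1] \join \pi) = [\sigma,\tau]$ (using surjectivity of $f$). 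Then applying the binary commutator homomorphism property a second time with the congruence $[\theta_0,\theta_1]$ in place of $\alpha$ and $\theta_2$ in place of $\beta$ yields
\begin{align*}
[[\theta_0,\theta_1],\theta_2] \join \pi &= f^{-1}\big([\, f([\theta_0,\theta_1] \join \pi),\ f(\theta_2 \join \pi)\,]\big)\\
&= f^{-1}\big([\, [\sigma,\tau],\ f(\theta_2 \join \pi)\,]\big)\\
&= f^{-1}\big(C(f(\theta_0 \join \pi), f(\theta_1 \join \pi), f(\theta_2 \join \pi))\big),
\end{align*}
which is exactly condition (2) for $C$.

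The main obstacle, such as it is, is purely bookkeeping: one must be careful that $f([\theta_0,\theta_1] \join \pi)$ really equals $[f(\theta_0 \join \pi), f(\theta_1 \join \pi)]$ rather than merely being contained in it, which uses that $f$ is surjective so that the lattice of congruences above $\pi$ maps isomorphically onto $\Con$ of the image, and that the binary commutator is preserved under this correspondence. Once that identification is in hand, both hypotheses of Proposition \ref{global} hold for $C(\theta_0,\theta_1,\theta_2)=[[\theta_0,\theta_1],\theta_2]$, and the conclusion $[[\theta_0,\theta_1],\theta_2] \leq [\theta_0,\theta_1,\theta_2]$ is immediate.
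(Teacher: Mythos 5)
Your proposal is correct and is exactly the route the paper intends: the paper states this theorem as an immediate consequence of Proposition \ref{global}, with the implicit verification being precisely your check that $C(\theta_0,\theta_1,\theta_2)=[[\theta_0,\theta_1],\theta_2]$ satisfies conditions (1) and (2) via the classical properties $[\alpha,\beta]\leq\alpha\meet\beta$ and $[\alpha,\beta]\join\pi=f^{-1}([f(\alpha\join\pi),f(\beta\join\pi)])$ of the binary modular commutator. Your attention to the surjectivity point, ensuring $f([\theta_0,\theta_1]\join\pi)=[f(\theta_0\join\pi),f(\theta_1\join\pi)]$ exactly, is the only nontrivial bookkeeping and you handle it correctly.
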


\section{Ternary Commutator and 3-Dimensional Cube Terms}

In \cite{oprsal}, Oprsal defines what is called a \textbf{strong cube term} for a congruence permutable variety. These strong cube terms are definable from a Mal'cev operation. We reproduce this definition for the $3$-dimensional case. Let $h(x_{(0,0)}, x_{(1,0)}, x_{(0,1)})$ be a term operation satisfying the identities

\begin{enumerate}
\item $h(x,x,y) \approx y$

\item $h(x,y,x) \approx y.$

\end{enumerate}
Notice that $h$ satisfies the Mal'cev identities up to a permutation of variables. We now define 
$$ p(x_{(0,0,0)},x_{(1,0,0)},x_{(0,1,0)},x_{(1,1,0)},x_{(0,0,1)},x_{(1,0,1)},x_{(0,1,1)}):= $$
$$h\big(h(x_{(0,0,0)},x_{(1,0,0)},x_{(0,1,0)}),x_{(1,1,0)}, h(x_{(0,0,1)},x_{(1,0,1)},x_{(0,1,1)})\big)$$

The operation $p$ is called a $3$-dimensional strong cube term. It satisfies the identities

\begin{enumerate}
\item $p(w,w,x,x,y,y,z) \approx z$
\item $p(w,x,w,x,y,z,y) \approx z$
\item $p(w,x,y,z,w,x,y) \approx z$
\end{enumerate}

Now, every modular variety has a weakened Mal'cev operation called a difference term. This is a term operation $d(x_{(0,0)}, x_{(1,0)}, x_{(0,1)})$ that satisfies 
\begin{enumerate}
\item $d(x,x,y) \approx y$
\item $d(x,y,x) \equiv_{[\theta, \theta]} y$, where $\theta = \Cg(\langle x,y \rangle ).$
\end{enumerate}
Notice the permutation of variables. We will later need the following

\begin{prop}[Special Case of 5.7 in \cite{FM}] \label{prop:binarydeldifferenceterm}
Let $\theta \in \Con(\A)$, and suppose $x \equiv_\theta y \equiv_\theta z$. Then 

$$\left[\begin{array}{cc}
			z & d(x,y,z)\\
			x & y \end{array} \right] \in \Delta_{\theta, \theta}$$
\end{prop}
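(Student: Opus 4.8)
The plan is to reduce the statement to the known binary $\Delta$-characterization of the difference term, namely Proposition~\ref{prop:binarydeldifferenceterm} applied inside an algebra where $d$ behaves like a Mal'cev operation --- that is, modulo $[\theta,\theta]$. Concretely, work in the algebra $\A/[\theta,\theta]$ together with the congruence $\theta/[\theta,\theta]$. There $d$ satisfies $d(x,y,x) \approx y$ on the $\theta$-class of $x$, so $d$ restricted to that class is a genuine Mal'cev-type operation (up to the permutation of variables noted in the definition), and the matrix
$$\left[\begin{array}{cc} z & d(x,y,z)\\ x & y \end{array}\right]$$
is then literally a $(\theta,\theta)$-matrix (a generator-combination of $M(\theta,\theta)$), hence lies in $\Delta_{\theta,\theta}$ over $\A/[\theta,\theta]$. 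The remaining work is to lift this membership back to $\A$ itself, picking up only a $\Delta_{[\theta,\theta],[\theta,\theta]}$-error, and then absorb that error.

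First I would recall that $\Delta_{\theta,\theta}$ is the transitive closure of $M(\theta,\theta)$ (the binary case discussed in Section~4), and that $d(x,y,z) \equiv_{[\theta,\theta]} d(x,y,z)$ trivially while $d(x,y,x) \equiv_{[\theta,\theta]} y$. So the two rows $\langle z, d(x,y,z)\rangle$ and $\langle x, y\rangle$ are both $\theta$-pairs, meaning the displayed matrix is at least a well-formed element of $\theta \subseteq \A^2$ viewed as $1$-dimensional matrices; the content is that the \emph{pair} of these two rows lies in the congruence $\Delta_{\theta,\theta}$ of $\theta$. Second, I would use the identity $d(x,x,y)\approx y$ together with $d(x,y,x)\equiv_{[\theta,\theta]}y$ to exhibit an explicit short path of $M(\theta,\theta)$-matrices connecting $\left[\begin{smallmatrix} z & d(x,y,z)\\ x & y\end{smallmatrix}\right]$ to the diagonal pair $\left[\begin{smallmatrix} z & z\\ x & x\end{smallmatrix}\right]$ modulo $[\theta,\theta]$: the square built from the term $d$ applied coordinatewise to the generating $\theta$-pairs $\langle x,y\rangle$, $\langle x,x\rangle$, $\langle x,z\rangle$ (or the appropriate arrangement dictated by which two of $d$'s slots carry the "collapsing" variable) is a $(\theta,\theta)$-matrix by construction, and evaluating its corners via the $d$-identities gives exactly the two matrices in question up to $[\theta,\theta]$. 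Third, I would invoke Proposition~\ref{prop:binarydeldifferenceterm} in the form already available --- or, if one wants a self-contained route, Theorem~\ref{thm:DelBin} characterizing when a matrix of a given shape lies in $\Delta$ --- to conclude.

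The main obstacle is the bookkeeping of the permutation of variables: $d$ is Mal'cev "up to a permutation," so one must be careful which coordinate of the generating square is the $\theta_0$-direction and which is the $\theta_1$-direction, and correspondingly which identity ($d(x,x,y)\approx y$ exactly, versus $d(x,y,x)\equiv_{[\theta,\theta]} y$) is applied at which corner. Getting the orientation right is what makes the displayed matrix come out as written (top-left $z$, top-right $d(x,y,z)$, bottom-left $x$, bottom-right $y$) rather than a transpose or a reflection of it. Once the orientation is fixed, the verification is a direct application of the defining identities of $d$ to the corners of the single $(\theta,\theta)$-matrix obtained by applying $d$ coordinatewise to three diagonal/constant generating $\theta$-pairs, so no transitive closure of length greater than one is actually needed --- the statement holds already at the level of $M(\theta,\theta)$, and hence a fortiori in $\Delta_{\theta,\theta}$.
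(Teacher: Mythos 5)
The paper itself offers no proof of this proposition --- it is quoted as a special case of Proposition 5.7 of \cite{FM} --- so your attempt has to stand on its own, and as written it has both a circularity and a genuine gap. The circularity: your primary route is to ``invoke Proposition~\ref{prop:binarydeldifferenceterm} in the form already available'' inside $\A/[\theta,\theta]$, but that proposition is exactly the statement you are asked to prove; citing it (even in a quotient) is not a proof. The quotient-and-lift framing does not rescue this, because describing how $\Delta_{\theta,\theta}$ interacts with the quotient map $\A\to\A/[\theta,\theta]$ is itself a nontrivial fact nowhere established in this note, and the ``$\Delta_{[\theta,\theta],[\theta,\theta]}$-error'' you propose to ``absorb'' is precisely the content of FM's 5.7.

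The concrete gap is visible in your own one-step construction. Applying $d$ coordinatewise to the generators $\left[\begin{smallmatrix} x & x\\ x & x\end{smallmatrix}\right]$, $\left[\begin{smallmatrix} x & y\\ x & y\end{smallmatrix}\right]$, $\left[\begin{smallmatrix} z & z\\ x & x\end{smallmatrix}\right]$ of $M(\theta,\theta)$ yields $\left[\begin{smallmatrix} z & d(x,y,z)\\ x & d(x,y,x)\end{smallmatrix}\right]\in M(\theta,\theta)$: the three corners $z$, $d(x,y,z)$, $x$ come out exactly as wanted via $d(x,x,y)\approx y$, but the fourth corner is $d(x,y,x)$, which equals $y$ only modulo $[\theta,\theta]$ (it equals $y$ on the nose only when $d$ is genuinely Mal'cev, i.e.\ in Opr\v{s}al's permutable setting). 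So your concluding claim that the displayed matrix lies in $M(\theta,\theta)$ and ``no transitive closure of length greater than one is actually needed'' is false for a general modular algebra. What is missing is an argument that $\Delta_{\theta,\theta}$ is stable under replacing one corner by a $[\theta,\theta]$-related element --- for instance by combining Theorem~\ref{thm:DelBin} (a $[\theta,\theta]$-pair sits in $\Delta_{\theta,\theta}$ against a constant pair) with the fact that $\Delta_{\theta,\theta}$ is a congruence on $\theta$, or by a Shift Lemma (Proposition~\ref{prop:shift}) computation. Until that corner-correction step is supplied, the proof does not go through.
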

We use the difference term to define a term operation that is analogous to a strong cube term. Set

$$ q(x_{(0,0,0)},x_{(1,0,0)},x_{(0,1,0)},x_{(1,1,0)},x_{(0,0,1)},x_{(1,0,1)},x_{(0,1,1)}):= $$
$$d\big(d(x_{(0,0,0)},x_{(1,0,0)},x_{(0,1,0)}),x_{(1,1,0)}, d(x_{(0,0,1)},x_{(1,0,1)},x_{(0,1,1)})\big)$$
We call $q$ a \textbf{$3$-dimensional wobbly cube term}. In fact, $q$ satisfies 

\begin{enumerate}
\item $q(x,x,x,x,y,y,y) \approx y$
\item $q(x,x,y,y,x,x,y) \approx y$
\item $q(x,y,x,y,x,y,x) \equiv_{[\theta, \theta, \theta]} y$, where $\theta = \Cg(\langle x,y \rangle )$,

\end{enumerate}
which we shall now demonstrate.
\begin{thm}\label{thm:wobblycompletesmatrices}
Let $\theta \in \Con(\A)$. Let $m\in A^{2^3}$ be such that $\face_0^0(m) , \face_1^0(m),\face_2^0(m) \in \Delta_{\theta, \theta}$. Set

\begin{align*}
(*) &= q(m_{(0,0,0)}, m_{(1,0,0)}, m_{(0,1,0)}, m_{(1,1,0)}, m_{(0,0,1)}, m_{(1,0,1)}, m_{(0,1,1)})\\
&= q(a,b,c,d,e,f,g)
\end{align*}
Take $z \in  A^{2^3}$ such that $z_f = m_f$ when $f \neq (1,1,1)$, and $z_{(1,1,1)} = (*)$. Then $z \in \Delta_{\theta_0, \theta_1, \theta_2}$.
\end{thm}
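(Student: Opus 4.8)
The plan is to reduce the three-dimensional statement to the two-dimensional case (Proposition~\ref{prop:binarydeldifferenceterm}) by unwrapping the definition of $q$ as a triple nesting of difference terms and feeding one face of $m$ at a time into $\Delta_{\theta,\theta}$. First I would observe that since $\face_0^0(m), \face_1^0(m), \face_2^0(m) \in \Delta_{\theta,\theta}$, all eight entries of $m$ lie in a single $\theta$-class; this is what lets us apply the difference term identities and Proposition~\ref{prop:binarydeldifferenceterm} freely. Write the inner difference terms as $d_0 = d(a,b,c) = d(m_{(0,0,0)}, m_{(1,0,0)}, m_{(0,1,0)})$ and $d_1 = d(e,f,g) = d(m_{(0,0,1)}, m_{(1,0,1)}, m_{(0,1,1)})$, so that $(*) = d(d_0, d, d_1)$.

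The key step is to build the complex witnessing $z \in \Delta_{\theta_0, \theta_1, \theta_2}$ by applying $\Delta$-preserving operations. Since $\Delta_{\theta,\theta,\theta} = \Delta_{\theta,\theta,\theta}$ is a subalgebra of $A^{2^3}$ closed under the term operations of $\A$ (in particular under $d$ applied coordinatewise), I would exhibit $z$ as $d(z^{(0)}, z^{(1)}, z^{(2)})$ for suitable $z^{(i)} \in \Delta_{\theta,\theta,\theta}$, mirroring the nesting in the definition of $q$. Concretely: apply Proposition~\ref{prop:binarydeldifferenceterm} to the face $\face_2^0(m) \in \Delta_{\theta,\theta}$ to get a cube whose back face records the relation $\bigl[\begin{smallmatrix} c & d(a,b,c) \\ a & b\end{smallmatrix}\bigr] \in \Delta_{\theta,\theta}$, and similarly for $\face_2^1(m)$ to get $\bigl[\begin{smallmatrix} g & d(e,f,g) \\ e & f\end{smallmatrix}\bigr]$; stacking these along the third coordinate — which is legitimate because each is in $\Delta_{\theta,\theta}$, hence the stacked object is in $\Delta_{\theta,\theta,\theta}$ by Theorem~\ref{thm:tridelisbindel} or directly by Lemma~\ref{lem:symdel4} — produces a complex whose $(1,1,\cdot)$-line is $\langle d_0, d_1\rangle$ sitting over the corresponding entries of $m$. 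Then one more application of the difference term (now along the line indexed by the outer $d$, using the identities $d(x,x,y)\approx y$ and Proposition~\ref{prop:binarydeldifferenceterm} on the triple $(d_0, d, d_1)$, all $\theta$-related) replaces the $(1,1,1)$ corner by $d(d_0, d, d_1) = (*)$ while leaving all other corners fixed, because the identity $d(x,x,y)\approx y$ kills the modification at every vertex other than $(1,1,1)$.

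I expect the main obstacle to be bookkeeping: verifying that the intermediate objects really are genuine $(\theta,\theta,\theta)$-complexes (so that Lemma~\ref{lem:symdel4} and the characterization $\Delta_{\theta_i,\theta_j,\theta_l} = \Delta_{\Delta_{\theta_i,\theta_j}, \Delta_{\theta_i,\theta_l}}$ of Theorem~\ref{thm:tridelisbindel} apply), and that the coordinatewise applications of $d$ modify exactly the one corner $(1,1,1)$ and nothing else. The cleanest route is probably to work entirely inside $\Delta_{\theta,\theta}$ viewed as an algebra and use Theorem~\ref{thm:tridelisbindel} to identify $\Delta_{\theta,\theta,\theta}$ with $\Delta_{\Delta_{\theta,\theta},\Delta_{\theta,\theta}}$, so that the whole argument becomes a single application of the two-dimensional Proposition~\ref{prop:binarydeldifferenceterm} for the congruence $\Delta_{\theta,\theta}$ on the algebra $\theta$ — at which point the nesting structure of $q$ matches the nesting $d(d(\cdots),\cdot,d(\cdots))$ automatically, and the three hypotheses $\face_i^0(m)\in\Delta_{\theta,\theta}$ are exactly what is needed to place $m$, viewed as a pair of elements of $\Delta_{\theta,\theta}$, in the $\theta$-class structure required by that proposition. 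A short diagram (as in the figures elsewhere in the paper) should make the corner-tracking transparent.
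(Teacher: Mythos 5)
There is a genuine gap, and it sits exactly at the step you dismiss as bookkeeping. Your construction takes the two squares $\bigl[\begin{smallmatrix} c & d(a,b,c)\\ a & b\end{smallmatrix}\bigr]$ and $\bigl[\begin{smallmatrix} g & d(e,f,g)\\ e & f\end{smallmatrix}\bigr]$, each in $\Delta_{\theta,\theta}$ by Proposition \ref{prop:binarydeldifferenceterm}, and ``stacks'' them along the third coordinate, asserting that the resulting cube lies in $\Delta_{\theta,\theta,\theta}$ because each face lies in $\Delta_{\theta,\theta}$, citing Theorem \ref{thm:tridelisbindel} or Lemma \ref{lem:symdel4}. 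Neither result says that. $\Delta_{\theta,\theta,\theta}$ is a congruence on the algebra $\Delta_{\theta,\theta}$, in general a proper one, so an arbitrary pair of elements of $\Delta_{\theta,\theta}$ is not a $\Delta_{\theta,\theta,\theta}$-pair; Lemma \ref{lem:symdel4} only describes it as the transitive closure of corners of genuine $(\theta,\theta,\theta)$-complexes, which your stacked cube has not been shown to be. If the principle you invoke were true, then for any $\langle x,y\rangle\in[\theta,\theta]$ one could stack a constant square against a square witnessing $\langle x,y\rangle\in[\theta,\theta]$ via Theorem \ref{thm:DelBin}, and Theorem \ref{thm:maindeltathm} would yield $[\theta,\theta]\leq[\theta,\theta,\theta]$ --- false already for the top congruence of a nilpotent group of class two, where the ternary commutator is trivial but $[1,1]$ is not. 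The specific stacked cube you need may in fact be $\Delta_{\theta,\theta,\theta}$-related under the hypotheses $\face_0^0(m),\face_1^0(m),\face_2^0(m)\in\Delta_{\theta,\theta}$, but proving that is essentially the content of the theorem (it is what the paper's auxiliary conditions (1)--(3) encode), so as written you have assumed the crux rather than proved it. (Minor point: only the seven entries $a,\dots,g$ are forced into one $\theta$-class; $m_{(1,1,1)}$ is unconstrained, though it is also irrelevant.)

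The ``cleanest route'' does not repair this, because a single application of Proposition \ref{prop:binarydeldifferenceterm} at the level of pairs produces the wrong cube. Viewing $m$ as a square over the algebra $\theta$ with entries the direction-$2$ edges $\langle a,e\rangle,\langle b,f\rangle,\langle c,g\rangle$, the proposition applied to the congruence $\Delta_{\theta,\theta}$ yields a cube whose predicted corner is the coordinatewise value $\langle d(a,b,c),\,d(e,f,g)\rangle$: its $(1,1,0)$ entry is $d(a,b,c)$ rather than the actual entry $m_{(1,1,0)}=d$, and its $(1,1,1)$ entry is $d(e,f,g)$ rather than $q(a,b,c,d,e,f,g)=d(d(a,b,c),d,d(e,f,g))$; the analogous mismatch occurs in the other two orientations. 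The outer $d$ in $q$, which corrects the predicted corner $d(a,b,c)$ by the true corner $d$, is precisely what a coordinatewise application cannot see, and handling it is where the real work lies. The paper does this by applying $q$ coordinatewise to seven elements $A,\dots,G$ of $\Delta_{\theta,\theta,\theta}$ and then verifying three extra $\Delta$-relations, each by using Proposition \ref{prop:binarydeldifferenceterm} twice --- once in $\A$ and once inside the algebra $\Delta_{\theta,\theta}$ via Theorem \ref{thm:tridelisbindel} --- before finishing with transitivity; note this is also where the hypothesis $\face_2^0(m)\in\Delta_{\theta,\theta}$ is actually used, to relate $d$ and $d(a,b,c)$ over $c$. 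Your instinct to exploit Theorem \ref{thm:tridelisbindel} and the two-dimensional proposition is the right ingredient, but both of your proposed reductions leave the essential step unproved.
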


\begin{proof}
See Figure \ref{fig:wobblycube_1} for a picture of what is to be shown.

\begin{figure}[!ht]
\includegraphics{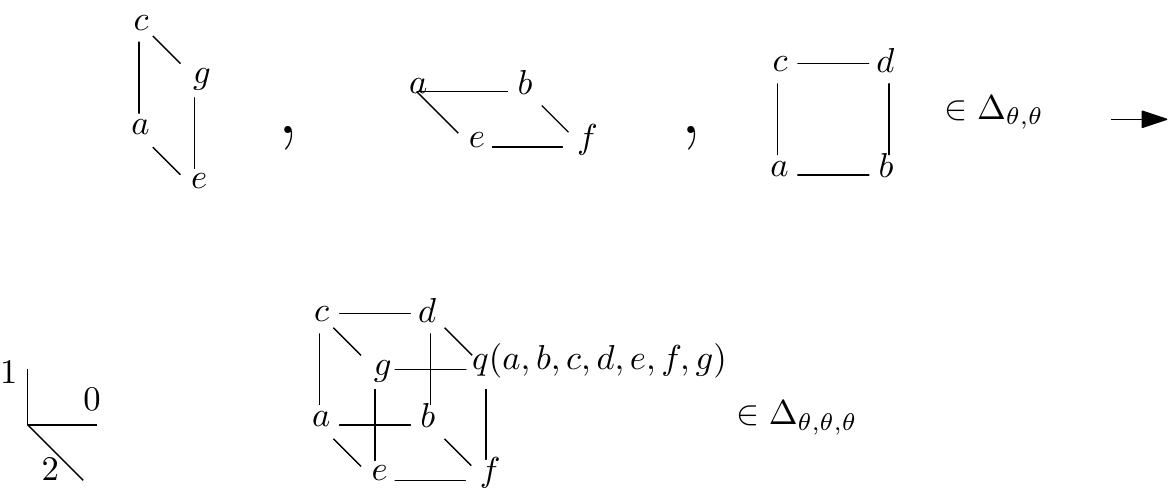}
\caption{}\label{fig:wobblycube_1}
\end{figure}

Let $A, B, C, D, E, F, G \in \Delta_{\theta_0, \theta_1, \theta_2}$ be the matrices in Figure \ref{fig:wobblycube_2}. As shown, \newline $q(A, B, C, D, E, F, G) \in \Delta_{\theta_0, \theta_1, \theta_2}$ also.

Suppose now that the conditions $(1), (2)$ and $(3)$ that are shown in Figure \ref{fig:wobblycube_3} hold. Using that $\Delta_{\theta, \theta, \theta}$ is a congruence that relates any two opposing faces will finish the proof. So, it remains to check that the conditions $(1), (2)$ and $(3)$ hold. We will show $(2)$. The proof of the others is similar.

Now, an application of Proposition \ref{prop:binarydeldifferenceterm} gives that
$ \left[ \begin{array}{cc}
			c & d(a,b,c)\\
			a & b \end{array} \right] \in \Delta_{\theta, \theta} $
so 
$$ \left[ \begin{array}{cc}
			c & d(a,b,c)\\
			a & b \end{array} \right] \Delta_{\theta, \theta, \theta}\left[ \begin{array}{cc}
			c & d(a,b,c)\\
			a & b \end{array} \right]. $$
Moreover, $ \left[ \begin{array}{cc}
			d & d(a,b,c)\\
			c & c \end{array} \right] \in \Delta_{\theta, \theta} $, because $ \left[ \begin{array}{cc}
			 c& d\\
			a & b \end{array} \right] \in \Delta_{\theta, \theta} $. Also, $ \left[ \begin{array}{cc}
			d(a,b,c) &d(a,b,c) \\
			c & c\end{array} \right] \in \Delta_{\theta, \theta} $.
			
Therefore, $\left[\begin{array}{c}
		d\\
		c \end{array}\right] \Delta_{\theta, \theta}\left[\begin{array}{c}
		d(a,b,c)\\
		c \end{array}\right]\Delta_{\theta, \theta} \left[\begin{array}{c}
		d(a,b,c)\\
		c \end{array}\right]$. By Theorem \ref{thm:tridelisbindel}, $\Delta_{\theta, \theta, \theta} = \Delta_{\Delta_{\theta, \theta}, \Delta_{\theta, \theta}}$, so we use Proposition \ref{prop:binarydeldifferenceterm} to conclude that 
		
		$$ \left[ \begin{array}{cc}
			d & d(a,b,c)\\
			c & c \end{array} \right] \Delta_{\theta, \theta, \theta}\left[ \begin{array}{cc}
			q(a,b,c,d,a,b,c) & d(a,b,c)\\
			c & c \end{array} \right]. $$
The result now follows, see Figure \ref{fig:wobblycube_4}.
\end{proof}

\begin{figure}
\begin{center}
\includegraphics[scale=1]{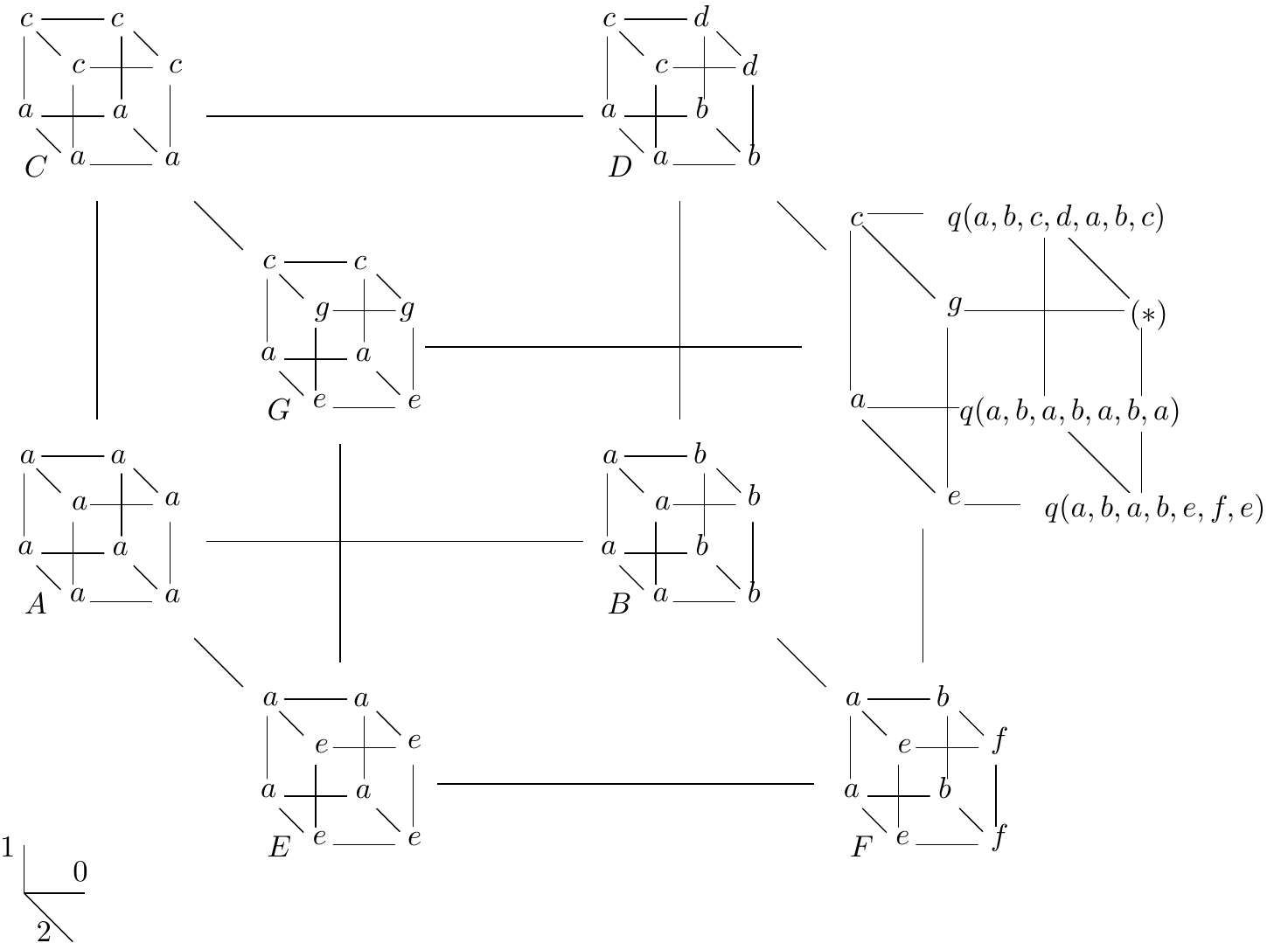}
\caption{}\label{fig:wobblycube_2}
\end{center}
\end{figure}

\begin{figure}
\includegraphics[scale=.9]{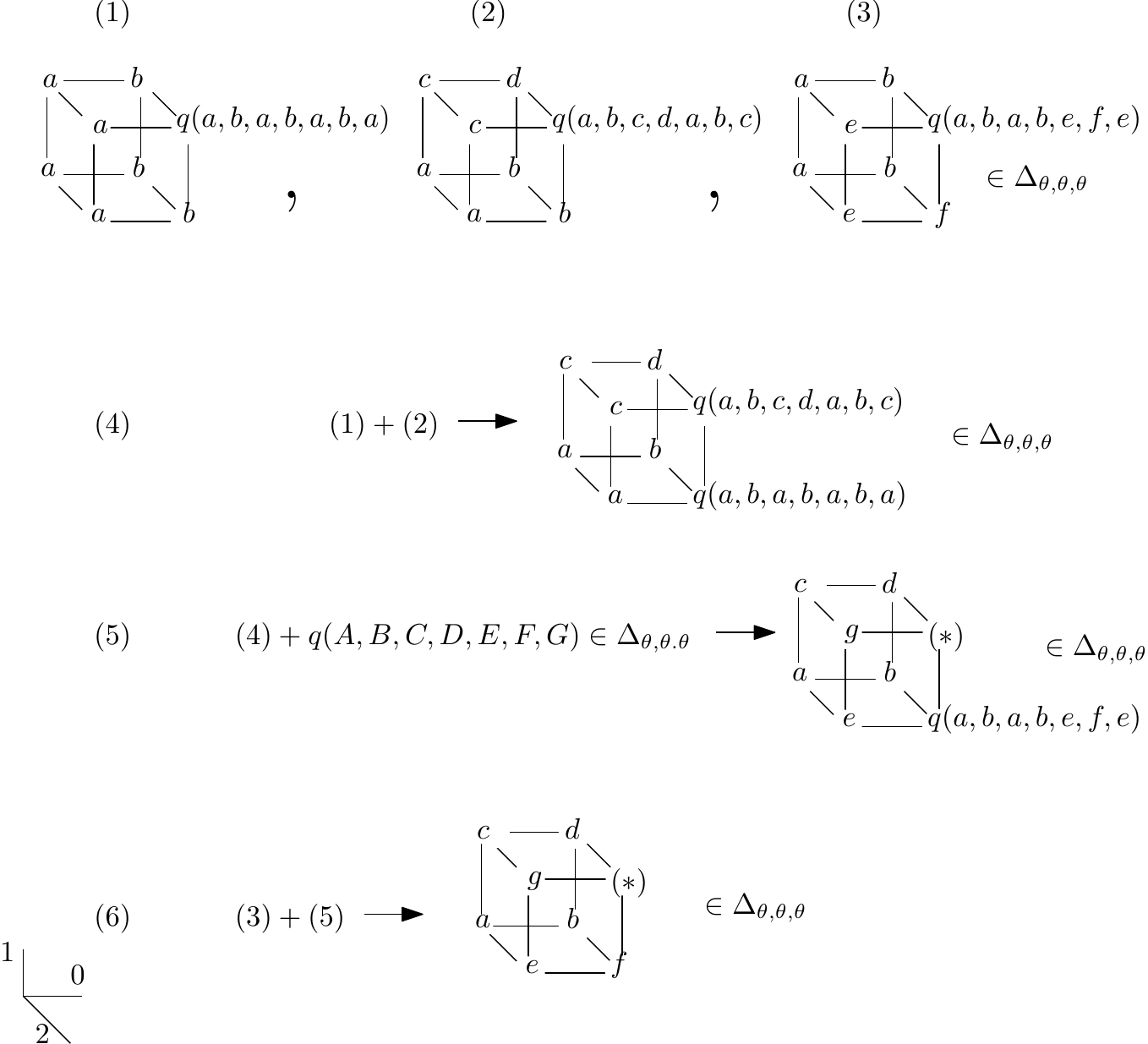}
\caption{}\label{fig:wobblycube_3}
\end{figure}

\begin{figure}
\includegraphics[scale =.9]{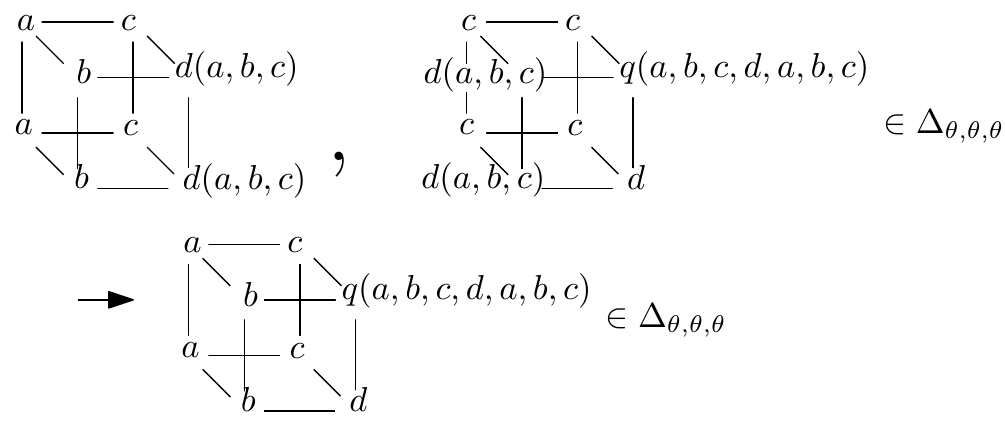}
\caption{}\label{fig:wobblycube_4}
\end{figure}
\clearpage

\begin{prop}\label{prop:wobblyidentities}
Let $q$ be a $3$-dimensional wobbly cube term for a modular variety $\var$. Then $q$ satisfies

\begin{enumerate}
\item $q(x,x,x,x,y,y,y) \approx y$
\item $q(x,x,y,y,x,x,y) \approx y$
\item $q(x,y,x,y,x,y,x) \equiv_{[\theta, \theta, \theta]} y$, where $\theta = \Cg(\langle x,y \rangle )$

\end{enumerate}

\end{prop}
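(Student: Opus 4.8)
The plan is to verify the three identities directly from the definition $q(x_0,\dots,x_6) = d(d(x_0,x_1,x_2),x_3,d(x_4,x_5,x_6))$ together with the two Mal'cev-type identities for the difference term, namely $d(x,x,y)\approx y$ (an honest identity) and $d(x,y,x)\equiv_{[\theta,\theta]} y$ (modulo the commutator of $\theta=\Cg(\langle x,y\rangle)$). Identities (1) and (2) should be pure term-rewriting consequences of $d(x,x,y)\approx y$ alone, while (3) is the one genuinely requiring the commutator machinery, and in particular Theorem \ref{thm:wobblycompletesmatrices}.

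For (1), substitute the pattern $(x,x,x,x,y,y,y)$: the inner calls give $d(x,x,x)\approx x$ and $d(y,y,y)\approx y$, so $q$ collapses to $d(x,x,y)\approx y$. For (2), substitute $(x,x,y,y,x,x,y)$: the inner calls give $d(x,x,y)\approx y$ and $d(x,x,y)\approx y$, so $q$ becomes $d(y,y,y)\approx y$. Both reductions use only the honest identity $d(x,x,z)\approx z$, so these are clean one-line computations and I would present them as such.

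The main obstacle is (3). Substituting $(x,y,x,y,x,y,x)$ gives $q = d(d(x,y,x),y,d(x,y,x))$. If $d$ were a true Mal'cev operation we would have $d(x,y,x)\approx y$ and then $d(y,y,y)\approx y$; but we only know $d(x,y,x)\equiv_{[\theta,\theta]} y$. The strategy is to connect this to the ternary $\Delta$ relation. Let $m\in A^{2^3}$ be the matrix with $m_g = x$ when $g$ has an even number of $1$'s and $m_g = y$ otherwise — equivalently, the matrix all of whose three opposite-face-pairs $\face_i^0(m),\face_i^1(m)$ are the matrix $\bigl[\begin{smallmatrix} y & x \\ x & y\end{smallmatrix}\bigr]$-type squares lying in $\Delta_{\theta,\theta}$ by Proposition \ref{prop:binarydeldifferenceterm} (each two-dimensional face records a difference-term square). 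Then Theorem \ref{thm:wobblycompletesmatrices} applies with $\theta_0=\theta_1=\theta_2=\theta$, giving that the matrix $z$ agreeing with $m$ except at the corner $(1,1,1)$, where it takes the value $q(x,y,x,y,x,y,x)$, lies in $\Delta_{\theta,\theta,\theta}$. Since $\Delta_{\theta,\theta,\theta}$ relates any two opposite faces, comparing the $(2)$-pivot line of $z$ with its $(2)$-supporting lines (all of which are constant or $\theta$-pairs in the right way) and invoking Theorem \ref{thm:maindeltathm} yields $\langle q(x,y,x,y,x,y,x), y\rangle \in [\theta,\theta,\theta]$.

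The delicate point to get right is the bookkeeping: one must choose the labeling of $m$ so that all three initial faces $\face_i^0(m)$ genuinely lie in $\Delta_{\theta,\theta}$ (this is exactly where Proposition \ref{prop:binarydeldifferenceterm} enters, applied to each face, using $x\equiv_\theta y$), and then check that the resulting corner value computed by $q$ really is $q(x,y,x,y,x,y,x)$ under the indexing convention of Theorem \ref{thm:wobblycompletesmatrices} (where the seven arguments are indexed by the seven vertices of the cube other than $(1,1,1)$). After that, extracting the conclusion from membership in $\Delta_{\theta,\theta,\theta}$ is a direct application of the characterization in Theorem \ref{thm:maindeltathm}, reading off the appropriate two-by-two cross sections. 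I expect the whole proof of (3) to be short once the matrix $m$ is correctly described, with the only real risk being an off-by-orientation error in matching the cube coordinates to the arguments of $q$.
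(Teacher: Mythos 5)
Your parts (1) and (2) are correct and coincide with the paper's (they are immediate from $d(x,x,y)\approx y$), and your overall route for (3) --- feed a suitably labeled cube into Theorem \ref{thm:wobblycompletesmatrices} and then read off membership in $[\theta,\theta,\theta]$ via Theorem \ref{thm:maindeltathm} --- is exactly the paper's strategy. However, your concrete choice of the cube $m$ breaks the argument at the decisive step. You take the parity cube, $m_g=x$ when $g$ has an even number of $1$'s and $m_g=y$ otherwise. Its faces $\face_i^0(m)$ are then the twisted squares $\bigl[\begin{smallmatrix} y & x\\ x & y\end{smallmatrix}\bigr]$, and these need not lie in $\Delta_{\theta,\theta}$: your appeal to Proposition \ref{prop:binarydeldifferenceterm} would require $d(x,y,y)=x$, which is not one of the difference-term identities in this paper's (permuted) convention, and in fact the twisted square already fails to lie in $\Delta_{\theta,\theta}$ for $\ZZ_3$ with $x\neq y$ (there $\theta=1$, $\Delta_{\theta,\theta}$ consists of the squares with $a_{11}=a_{10}+a_{01}-a_{00}$, so membership would force $2(x-y)=0$). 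Hence the hypothesis of Theorem \ref{thm:wobblycompletesmatrices} is not met. Moreover, even granting membership, the parity cube produces the corner value $q(m_{(0,0,0)},m_{(1,0,0)},\dots,m_{(0,1,1)})=q(x,y,y,x,y,x,x)$, not $q(x,y,x,y,x,y,x)$, so you would be controlling the wrong instance of $q$. This is more than an off-by-orientation slip; no relabeling of the parity cube gives the required pattern.

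The repair is the paper's choice of cube: take $m=\cube_0^3(x,y)$, i.e.\ $m_g=x$ or $y$ according as $g(0)=0$ or $1$. Then $\face_0^0(m)$ is constant, while $\face_1^0(m)$ and $\face_2^0(m)$ are squares of the form $\bigl[\begin{smallmatrix} x & y\\ x & y\end{smallmatrix}\bigr]$, which are generators of $\Delta_{\theta,\theta}$, so no difference-term proposition is needed to verify the hypotheses. Theorem \ref{thm:wobblycompletesmatrices} then yields a cube in $\Delta_{\theta,\theta,\theta}$ whose $(1,1,1)$ entry is exactly $q(x,y,x,y,x,y,x)$, with opposite faces $\bigl[\begin{smallmatrix} x & y\\ x & y\end{smallmatrix}\bigr]$ and $\bigl[\begin{smallmatrix} x & q(x,y,x,y,x,y,x)\\ x & y\end{smallmatrix}\bigr]$, and Theorem \ref{thm:maindeltathm} gives $\langle q(x,y,x,y,x,y,x),\, y\rangle\in[\theta,\theta,\theta]$, which is your intended final step.
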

\begin{proof}
The first two identities follow from the definition of $q$. Item (3) follows from Theorems \ref{thm:wobblycompletesmatrices} and \ref{thm:maindeltathm}. Let $\A \in \var$, and take $x, y \in \A$. Let $\theta = \Cg^{\A}(\langle x,y \rangle)$. Then $\cube_0^3(x,y) \in \Delta_{\theta, \theta, \theta}$. Therefore, 
$$\bigg \langle \left[ \begin{array}{cc}
					x& y\\
					x & y \end{array} \right] , \left[ \begin{array}{cc}
					x& q(x,y,x,y,x,y,x)\\
					x & y \end{array} \right] \bigg\rangle\in \Delta_{\theta, \theta, \theta},$$
and the result follows.
\end{proof}

\begin{figure}[!ht]
\includegraphics[scale=.9]{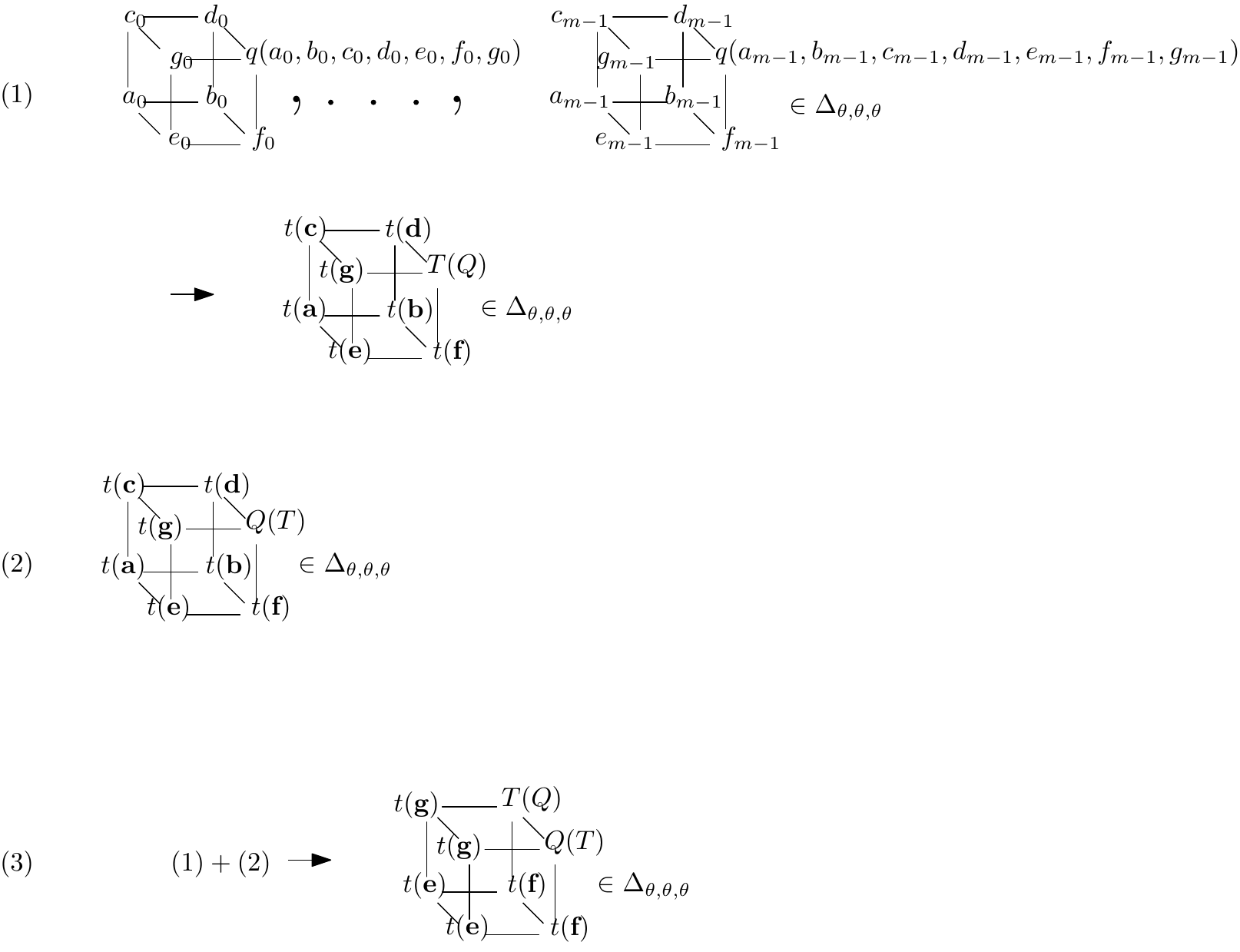}
\caption{}\label{fig:wobblycube_5}
\end{figure}

\clearpage
\bibliographystyle{plain}

\end{document}